\pdfoutput=1
\documentclass[11pt]{article}

\usepackage{preprint-layout}
\usepackage{url}
\newcommand{\ppnotation}{}
\usepackage{preprint-notation}
\graphicspath{{figures/}}

\title{Normal Equations and Discrete Energy Structures for
High-Order Streamline Diffusion}
\author{
Erik Burman \mbox{  }
Peter Hansbo \mbox{  }
Mats G. Larson
}
\date{ }

\begin{document}

\maketitle

\begin{abstract}
We analyse fully discrete Streamline Diffusion finite element methods, also
known as streamline upwind/Petrov--Galerkin (SUPG) methods, for time-dependent
first-order systems with skew-symmetric spatial operators.
To the best of our knowledge, this is the first stability and a
priori error analysis of strongly consistent Streamline Diffusion finite
elements combined with linear multistep schemes of order greater than two for
multidimensional first-order hyperbolic systems.
Local-in-time \(L^2\)-residual minimisation singles out
\(\delta=b_0\tau\), where \(\delta\) is the stabilisation parameter,
\(\tau\) is the time step, and \(b_0\) is the current-time coefficient in the
method average. For this choice, each implicit linear system is a symmetric
positive definite normal equation in a graph norm; for a general
stabilisation choice, the resulting system need not be symmetric.

A direct dual-residual estimate, proved with the \(L^2\)-projection,
skew-symmetry, and discrete summation by parts, yields errors of order
\(O(h^{k+1/2}+\tau^\nu)\) when \(\delta\) is proportional to \(h\) and
\(\tau=O(h)\), under the stated regularity assumptions. Here \(h\) is the mesh
size, \(k\) is the spatial polynomial degree, and \(\nu\) is the temporal order. Stability is
established for the \(\theta\)-method and third- through fifth-order
Adams--Moulton methods (AM3--AM5). The general-degree AM3 and AM4 estimates use
strengthened Courant--Friedrichs--Lewy (CFL) conditions; an elementwise
cancellation recovers the standard hyperbolic regime, with \(\tau\) bounded by
a constant multiple of \(\delta\) and \(\delta\) by a constant multiple of
\(h\), for continuous piecewise affine elements.
AM5 and the explicit third- and fourth-order Adams--Bashforth methods
(AB3 and AB4) are stable under a standard hyperbolic CFL condition, though the
explicit schemes do not have normal-equation structure.

Order-matched acoustic tests recover orders two through five and
material-residual rates one half-order lower. In a d'Alembert test with
discontinuous initial data, normal-equation SUPG recovers second-order local
\(\mathbb P_1\)
convergence away from the wave fronts, whereas the unstabilised error is close
to half order. A two-dimensional compact-wave test shows improved localisation
with modest dissipation.
\end{abstract}


\section{Introduction}
\label{sec:intro}

\paragraph{Problem Setting.}
Let \(I=(0,T)\) be a finite time interval, let
\(\Omega\subset\mathbb R^d\) be a Lipschitz domain, and let
\(n_{\mathrm{sys}}\) denote the number of components. Set
\(\mathcal H=[L^2(\Omega)]^{n_{\mathrm{sys}}}\), with inner product
\((\,\cdot\,,\,\cdot\,)_{\Omega}\), and let
\(\mathcal V\subset[H^1(\Omega)]^{n_{\mathrm{sys}}}\) be a dense subspace
that incorporates the boundary conditions. We assume that the time-independent
first-order differential operator \(G:\mathcal V\to\mathcal H\) is
skew-symmetric on \(\mathcal V\), that is,
\begin{equation}
(GU,V)_{\Omega}=-(U,GV)_{\Omega}
\quad \forall U,V\in\mathcal V \label{eq:introduction-spatial-operator-skew}
\end{equation}
Given \(F:I\to\mathcal H\) and \(U_0\in\mathcal H\), we seek
\(U:[0,T]\to\mathcal H\), with \(U(t)\in\mathcal V\) for \(t\in I\), that
satisfies the Friedrichs-type initial-value problem
\begin{align}
\partial_tU+GU&=F \quad\text{in }I \label{eq:model}\\
U(0)&=U_0 \label{eq:model-initial-condition}
\end{align}
For the stationary theory of Friedrichs systems, we refer to
\cite{EG06,EGC07}; time-dependent formulations are considered in
\cite{BE16,BH21}.
In several space dimensions the operator acts on coupled fields, so
the analysis must control the interaction between multistep time differences
and the full system operator rather than a single scalar transport
direction.

Skew-symmetry endows the continuous problem with a natural \(L^2\)-energy
and yields energy stability for standard Galerkin finite element
semidiscretisations. For smooth solutions, however, the resulting error
estimates are typically suboptimal by one order. Stabilised methods commonly
recover half an order and, more importantly for wave propagation, better
preserve the local propagation of perturbations. The resulting discretisation
errors can therefore reflect more closely the dynamics of perturbations in the
continuous system, whereas dispersive errors generated by an unstabilised
Galerkin method may pollute spatially separated regions.

\paragraph{The Transient Stabilisation Problem.}
For a conforming finite element space \(\mathcal V_h\subset\mathcal V\), the
consistent semidiscrete Streamline Diffusion, or streamline
upwind/Petrov--Galerkin (SUPG), formulation takes the following form: find
\(U_h(t)\in\mathcal V_h\), with
\(U_h(0)=U_{h,0}\), where \(U_{h,0}\) is a suitable approximation of \(U_0\),
such that
\begin{equation}
\int_{\Omega}(\partial_tU_h+GU_h)\cdot(V_h+\delta GV_h)\,dx
=\int_{\Omega}F\cdot(V_h+\delta GV_h)\,dx
\quad \forall V_h\in\mathcal V_h \label{eq:introduction-supg-form}
\end{equation}
Here \(\delta>0\) is the stabilisation parameter, with the dimension of time.
Consistency requires the full
evolution residual to be paired with the perturbed test function. The
resulting cross term \((\partial_tU_h,\delta GV_h)_{\Omega}\) is the principal
obstacle in the transient energy analysis.

This analytical difficulty is coupled to the algebraic character of the time
step. For a general stabilisation parameter, an implicit Streamline Diffusion
step is nonsymmetric. Explicit Petrov--Galerkin schemes instead inherit a
generally nonsymmetric mass matrix and are subject to a hyperbolic
Courant--Friedrichs--Lewy (CFL) condition. This distinction is especially
relevant for nonlinear problems,
where implicit methods require a nonlinear solve at every time step.

In the framework developed below, a method average with current coefficient
\(b_0\) admits the distinguished parameter choice \(\delta=b_0\tau\). This
choice arises from local-in-time residual minimisation
\cite{BM04,FGK25} and makes the linear system at each time step a symmetric
positive definite normal equation in the corresponding graph norm. Other
choices of \(\delta\) remain useful for the stability analysis but do not in
general produce a symmetric system. For an explicit Adams--Bashforth method,
\(b_0=0\), so no positive stabilisation parameter yields this normal-equation
structure.

\paragraph{Main Contributions.}
We address these issues through a common operator and energy framework for
transient Streamline Diffusion methods combined with the \(\theta\)-method
and high-order Adams schemes.
Beyond order two, the classical \(A\)-stability route for linear
multistep methods is unavailable \cite{Dahl63}; stability must instead be
recovered from method-dependent discrete energies that remain compatible with
the spatial residual.
The principal contributions are as follows. Here and below, \(h\) denotes the
mesh size, \(k\) the spatial polynomial degree, \(\tau\) the time step, and
\(\nu\) the temporal order.
\begin{enumerate}
\item The fully discrete formulation separates the method average, the
backward time difference, the material residual, and the perturbed test
operator. This organisation exposes a common algebraic identity and
distinguishes the residual-minimisation choice \(\delta=b_0\tau\) from a
general SUPG parameter.

\item A direct dual-residual estimate leads to a fully discrete finite element
error bound of order \(O(h^{k+1/2}+\tau^\nu)\) when \(\delta\sim h\) and
\(\tau\lesssim h\), under the stated regularity assumptions. The proof uses
the ordinary \(L^2\)-projection, skew-symmetry, and discrete summation by
parts.

\item The abstract energy hypothesis is verified for the \(\theta\)-method and
for the Adams--Moulton (AM) methods AM3--AM5. For general polynomial degree,
the AM3 and AM4 results require strengthened CFL conditions. For continuous
piecewise affine elements, an argument based on elementwise constant gradients
recovers stability
and the error estimate under the standard hyperbolic CFL condition when
\(\tau\lesssim\delta\lesssim h\). AM5 requires only the standard hyperbolic
CFL condition for every degree covered by the abstract framework. Independent
energy identities establish stability and error estimates for the
Adams--Bashforth (AB) methods AB3 and AB4 under a standard hyperbolic CFL
condition.

\item The experiments separate temporal accuracy from spatial approximation.
An order-matched one-dimensional acoustic test pairs a method of temporal
order \(\nu\) with \(\mathbb P_{\nu-1}\) elements and recovers orders two through five in
the fully discrete final and graph errors; the material-residual errors follow
the corresponding half-order sequence. A discontinuous solution constructed
by d'Alembert's formula shows second-order local \(\mathbb P_1\) convergence for
normal-equation SUPG in smooth regions separated from the wave fronts, whereas
the unstabilised local error converges with order close to \(1/2\). A smooth
two-dimensional compact-wave test provides complementary evidence of improved
localisation.
\end{enumerate}

\noindent\begin{minipage}{\textwidth}
\paragraph{Previous Work.}
Hughes and Brooks introduced the Streamline Upwind Petrov--Galerkin method for
stationary hyperbolic problems in \cite{BrooksHughes1982CMAME}. Johnson and
co-workers subsequently gave a comprehensive analysis for stationary linear
transport in \cite{JNP84}, which also discusses the application of Streamline
Diffusion to Friedrichs systems.
\end{minipage}

The extension to transient problems is more delicate because consistency
requires the full space--time residual to be tested against the perturbed test
function. The resulting cross term couples the discrete time derivative to the
spatial differential operator. Bochev and co-workers established invertibility
of the linear system in \cite{BGS04}, but their result did not provide uniform
control of error growth in time. For finite-difference time discretisations,
tuning the stabilisation parameter was proposed as a means of controlling the
cross term \cite{LW95}. Space--time finite element methods with discontinuous
Galerkin approximation in time offered another route \cite{JNP84,HFH89}, at
the cost of larger, fully coupled nonsymmetric systems.
Higher-order space--time Streamline Diffusion analyses are available
for specialised kinetic systems \cite{AKS19}, while an upwind discontinuous
Galerkin time discretisation with a space--time SUPG term has recently been
analysed for scalar advection--diffusion \cite{BDG25}. These methods discretise
time variationally and do not produce the separated linear multistep
formulation considered here.

Discontinuous Galerkin methods combined with Runge--Kutta time integration
were developed in \cite{CS89} and became widely used for hyperbolic problems.
For continuous finite elements, symmetric alternatives include subgrid
viscosity \cite{Guer99}, orthogonal subscales \cite{Cod00}, local projection
\cite{BB04}, and continuous interior penalty stabilisation \cite{BH04}.
When combined with explicit Runge--Kutta methods, these approaches reproduce
important stability properties of discontinuous Galerkin schemes
\cite{BEF10}, although the stabilisation generally enlarges the spatial matrix
stencil.
High-order continuous finite elements with SUPG and Runge--Kutta or
deferred-correction time integration have also been studied by fully discrete
Fourier analysis on two-dimensional triangular meshes \cite{MTRA23}. That
analysis characterises linear stability and admissible CFL parameters, rather
than deriving the system-level a priori energy and error estimates pursued
here.

Uniform stability of the transient SUPG method for linear transport was
established by Burman \cite{Bu10}. Testing with a linear combination of the
solution and its time derivative yielded a stability estimate for any
\(A\)-stable time discretisation, including Crank--Nicolson and the
second-order backward differentiation formula. The analysis was subsequently extended to
convection--diffusion problems \cite{BS11} and time-dependent coefficients
\cite{ZL22}. More recently, stability has been proved for third- and
fourth-order Runge--Kutta methods combined with Streamline Diffusion in space
\cite{EG25}.
Thus the closest rigorous results either stop at second-order linear
multistep schemes or use Runge--Kutta, discontinuous-in-time, or coupled
space--time formulations. To the best of our knowledge, the present work
provides the first stability and a priori error analysis of strongly
consistent Streamline Diffusion finite elements combined with linear
multistep schemes of order greater than two for multidimensional first-order
hyperbolic systems. This is not merely a change of time integrator: the loss
of \(A\)-stability beyond order two leads to the method-dependent discrete
energy mechanisms developed here for AM3--AM5 and AB3--AB4.

\paragraph{Outline.}
Section~\ref{sec:abstract-method} introduces the operator setting, the time
average and its adjoint, and the operators \(T_{\delta}\), \(S_{\delta}\),
\(A_{\tau}\), and \(P_{\delta}\). Section~\ref{sec:meta} develops the
abstract stability and error framework. Section~\ref{sec:theta-method} proves
the \(\theta\)-method energy identity; Section~\ref{sec:adams-moulton} analyses AM3--AM5;
and Section~\ref{sec:ab} derives the AB3 and AB4 energy and stability
estimates. Section~\ref{sec:numerical-experiments} presents the one- and
two-dimensional acoustic experiments, and Section~\ref{sec:conclusions}
summarises the results and open problems. Appendix~\ref{app:dual-residual-proof}
contains the horizon-uniform proof of the dual-residual estimate.


\section{Abstract Setting and Formulation of the Method}
\label{sec:abstract-method}
We formulate the method for a real Hilbert space \(\mathcal H\) with inner
product \((\,\cdot\,,\,\cdot\,)_{\Omega}\) and a dense subspace
\(\mathcal V\subset\mathcal H\). The notation introduced here is used in all
subsequent sections. The relations \(a\lesssim b\) and \(a\sim b\) mean,
respectively, that \(a\le Cb\) and \(c b\le a\le Cb\), with positive
constants \(c\) and \(C\) independent of the mesh size, time step, and final
computed index unless stated otherwise.

\paragraph{Spatial Operator and Skew-Symmetry.}
Let \(G\) be a linear operator satisfying
\begin{equation}
G:\mathcal V\longrightarrow\mathcal H \label{eq:spatial-operator-domain}
\end{equation}
and assume that the boundary conditions and the choice of \(\mathcal V\)
give
\begin{equation}
(GU,V)_{\Omega}=-(U,GV)_{\Omega}\quad \forall U,V\in\mathcal V \label{eq:spatial-operator-skew}
\end{equation}
Consequently,
\begin{equation}
(GU,U)_{\Omega}=0\quad \forall U\in\mathcal V \label{eq:spatial-operator-null}
\end{equation}
The analysis below is for a time-independent operator. Thus, every time
difference and time average commutes with \(G\). Space- or time-dependent
coefficients require additional commutator bounds and are not covered by the
present statements.

\paragraph{Example: Acoustic Waves.}
Let \(c>0\) be constant. Starting from the physical variables \(p\) and
\(v\), introduce the scaled velocity \(u=cv\). The acoustic system becomes
\begin{align}
\partial_t p+c\nabla\cdot u&=f &&\text{in }\Omega\times(0,T) \label{eq:acoustic-pressure}\\
\partial_t u+c\nabla p&=cg &&\text{in }\Omega\times(0,T) \label{eq:acoustic-velocity}
\end{align}
Set \(U=(p,u)\), \(F=(f,cg)\), and
\begin{equation}
\mathcal H=L^2(\Omega)\times L^2(\Omega)^d \label{eq:acoustic-hilbert-space}
\end{equation}
with the product inner product. The operator is
\begin{equation}
G(p,u)=\bigl(c\nabla\cdot u,c\nabla p\bigr) \label{eq:acoustic-spatial-operator}
\end{equation}
For periodic boundary conditions, or reflecting-wall boundary conditions
for which the boundary pairing vanishes, integration by parts proves
\eqref{eq:spatial-operator-skew}. The corresponding norm is
\begin{equation}
\|U\|_{\Omega}^2=\|p\|_{\Omega}^2+\|u\|_{\Omega}^2
=\|p\|_{\Omega}^2+c^2\|v\|_{\Omega}^2 \label{eq:acoustic-energy-norm}
\end{equation}
Thus the powers of \(c\) in \eqref{eq:acoustic-pressure}--\eqref{eq:acoustic-velocity},
the operator, and the energy are consistent.


\subsection{Time Differences, Averages, and Their Adjoints}
\label{subsec:time-operators}
Let \(N\) denote the final computed index, let \(\tau>0\), and set
\(t^n=n\tau\) for \(0\le n\le N\), with \(t^N\le T\). Define the backward and
forward time differences by
\begin{align}
D_{\tau}^{-}U^n&=\tau^{-1}(U^n-U^{n-1}) \label{eq:backward-time-difference}\\
D_{\tau}^{+}U^n&=\tau^{-1}(U^{n+1}-U^n) \label{eq:forward-time-difference}
\end{align}
For coefficients \(b_0,\ldots,b_{\ell}\), define the method average
\begin{equation}
\mathcal M_{\tau}U^n=\sum_{j=0}^{\ell}b_jU^{n-j} \label{eq:method-average}
\end{equation}
Thus \(\ell+1\) is the stencil length, and \(b_0\) is the coefficient of the
current value.
The same notation is used for sampled exact functions and for the data. The
first computed index is denoted by \(n_0\), with \(n_0\ge\ell\), and all
earlier values required by the stencil are prescribed start-up data.

The average \(\mathcal M_{\tau}\) is not self-adjoint on a finite time
interval. Extend a test sequence by zero outside \(n_0\le n\le N\), and define
its discrete adjoint average by
\begin{equation}
\mathcal M_{\tau}^{*}V^m=\sum_{j=0}^{\ell}b_jV^{m+j} \label{eq:adjoint-method-average}
\end{equation}
A change of summation index gives the exact identity
\begin{equation}
\sum_{n=n_0}^{N}(\mathcal M_{\tau}U^n,V^n)_{\Omega}
=\sum_{m=n_0-\ell}^{N}(U^m,\mathcal M_{\tau}^{*}V^m)_{\Omega} \label{eq:average-adjoint-identity}
\end{equation}
In particular, \(\mathcal M_{\tau}^{*}V^m\) cannot be replaced by
\(\mathcal M_{\tau}V^m\) without additional symmetry and endpoint terms.
The corresponding summation-by-parts identity is
\begin{align}
\tau\sum_{n=n_0}^{N}(D_{\tau}^{-}U^n,V^n)_{\Omega}
&=(U^N,V^N)_{\Omega}-(U^{n_0-1},V^{n_0})_{\Omega} \label{eq:time-summation-by-parts-endpoints}\\
&\quad-\tau\sum_{n=n_0}^{N-1}(U^n,D_{\tau}^{+}V^n)_{\Omega} \label{eq:time-summation-by-parts-bulk}
\end{align}
Both endpoint terms in \eqref{eq:time-summation-by-parts-endpoints} are kept
explicit whenever this formula is used.

\paragraph{Approximation and Local Truncation Error.}
Let \(\mathcal T_h\) be a shape-regular mesh of \(\Omega\), set
\(h=\max_{K\in\mathcal T_h}\operatorname{diam}(K)\), and let
\(\mathbb P_j(K)\) denote the scalar polynomials of total degree at most
\(j\) on \(K\). Let \(\mathcal V_h\subset\mathcal V\) be a conforming finite
element space of componentwise polynomial degree at most \(k\), and let
\(\Pi_h:\mathcal H\to\mathcal V_h\) be the \(L^2\)-orthogonal projection. If
\(s\ge1\) is the spatial Sobolev regularity index, set
\(r=\min(s,k+1)\). We assume that
\begin{equation}
\|U-\Pi_hU\|_{\Omega}+h\|G(U-\Pi_hU)\|_{\Omega}
\lesssim h^r\|U\|_{H^r(\Omega)} \label{eq:spatial-approximation}
\end{equation}
Because \(\Pi_h\) is time independent, it commutes with the operators in
\eqref{eq:backward-time-difference} and \eqref{eq:method-average}. Define the
local truncation residual by
\begin{equation}
\rho^n=D_{\tau}^{-}U(t^n)+G\mathcal M_{\tau}U(t^n)
-\mathcal M_{\tau}F(t^n) \label{eq:local-truncation-residual}
\end{equation}
For a method of temporal order \(\nu\), we assume the standard local estimate
\begin{equation}
\|\rho^n\|_{\Omega}\le C\tau^{\nu-1/2}\mathcal C_{n,\nu+1}(U,F) \label{eq:high-consist}
\end{equation}
where \(\mathcal C_{n,\nu+1}(U,F)\) is an \(L^2\)-in-time regularity norm on the
stencil interval. The Adams estimate is derived in Section
\ref{sec:trunc-adams}.
Define the corresponding global regularity quantity by
\begin{equation}
\mathcal C_{T,\nu+1}(U,F)
=\left(\sum_{n=n_0}^{N}\mathcal C_{n,\nu+1}(U,F)^2\right)^{1/2}
\label{eq:global-truncation-regularity}
\end{equation}
The stencil intervals have uniformly bounded overlap (with a bound depending
only on the fixed method), so this quantity is controlled by the appropriate
global \(L^2(0,T)\) norm of the time derivatives occurring in the local
remainder. In particular, \eqref{eq:high-consist} gives
\begin{equation}
\|\rho\|_{\ell_{\tau}^2(n_0,N;\mathcal H)}
\le C\tau^\nu\mathcal C_{T,\nu+1}(U,F)
\label{eq:global-truncation-bound}
\end{equation}
For the dual-residual estimate we use the corresponding differentiated
consistency bounds
\begin{align}
\max_{n_0\le n\le N}\|\rho^n\|_{\Omega}
&\lesssim\tau^\nu\mathcal C^{\sharp}_{T,\nu+2}(U,F)
\label{eq:maximum-truncation-bound}\\
\|D_{\tau}^{+}\rho\|_{\ell_{\tau}^2(n_0,N-1;\mathcal H)}
&\lesssim\tau^\nu\mathcal C^{\sharp}_{T,\nu+2}(U,F)
\label{eq:differentiated-truncation-bound}
\end{align}
Here \(\mathcal C^{\sharp}_{T,\nu+2}(U,F)\) contains one additional time
derivative compared with \(\mathcal C_{T,\nu+1}(U,F)\), together with the
endpoint regularity required for the maximum in
\eqref{eq:maximum-truncation-bound}. These estimates follow from the same
integral-remainder argument as \eqref{eq:high-consist}, applied also to the
first forward difference of the remainder. They are verified below for the
\(\theta\)- and Adams methods.


\subsection{Shift and Method Operators}
\label{subsec:shift-method-operators}
For a stabilisation parameter \(\delta>0\), define the spatial test operator,
time shift, method operator, and combined test operator by
\begin{align}
T_{\delta}&=\mathrm{Id}+\delta G \label{eq:spatial-test-operator}\\
S_{\delta}&=\mathcal M_{\tau}+\delta D_{\tau}^{-} \label{eq:time-shift-operator}\\
A_{\tau}&=D_{\tau}^{-}+G\mathcal M_{\tau} \label{eq:method-operator}\\
P_{\delta}&=T_{\delta}S_{\delta} \label{eq:combined-test-operator}
\end{align}
Commutation with \(G\) gives
\begin{equation}
P_{\delta}=\mathcal M_{\tau}+\delta D_{\tau}^{-}
+\delta G\mathcal M_{\tau}+\delta^2GD_{\tau}^{-} \label{eq:combined-test-expansion}
\end{equation}
Skew-symmetry gives the graph-inner-product identity
\begin{equation}
(T_{\delta}U,T_{\delta}V)_{\Omega}
=(U,V)_{\Omega}+\delta^2(GU,GV)_{\Omega} \label{eq:test-operator-inner-product}
\end{equation}
and hence
\begin{equation}
\|T_{\delta}U\|_{\Omega}^2
=\|U\|_{\Omega}^2+\delta^2\|GU\|_{\Omega}^2 \label{eq:test-operator-norm}
\end{equation}
We use the graph inner product and norm
\begin{align}
(U,V)_{\delta}
&=(U,V)_{\Omega}+\delta^2(GU,GV)_{\Omega} \label{eq:graph-inner-product}\\
\|U\|_{\delta}^2
&=(U,U)_{\delta}=\|T_{\delta}U\|_{\Omega}^2 \label{eq:graph-norm}
\end{align}


\subsection{SUPG and Residual-Minimisation Formulations}
\label{subsec:supg-residual-formulations}
The time-discrete SUPG method is: given the start-up values, find
\(U^n\in\mathcal V_h\) such that
\begin{equation}
(A_{\tau}U^n,T_{\delta}V)_{\Omega}
=(\mathcal M_{\tau}F^n,T_{\delta}V)_{\Omega}
\quad \forall V\in\mathcal V_h \label{eq:supg-weak}
\end{equation}
This definition allows \(\delta\) to be chosen independently of the method
coefficient \(b_0\).

\paragraph{Residual-Minimisation Case.}
At a fixed time level, consider
\begin{equation}
U^n=\operatorname*{argmin}_{V^n\in\mathcal V_h}
\|D_{\tau}^{-}V^n+G\mathcal M_{\tau}V^n
-\mathcal M_{\tau}F^n\|_{\Omega}^2 \label{eq:ls}
\end{equation}
The derivative of the residual with respect to its current value is
\(\tau^{-1}T_{b_0\tau}\). Therefore the Euler--Lagrange equation is
\begin{equation}
(A_{\tau}U^n,T_{b_0\tau}V)_{\Omega}
=(\mathcal M_{\tau}F^n,T_{b_0\tau}V)_{\Omega}
\quad \forall V\in\mathcal V_h \label{eq:normal-equation}
\end{equation}
Thus \eqref{eq:supg-weak} is a normal equation precisely when
\(\delta=b_0\tau\).

Define the history part of the average by
\begin{equation}
\mathcal M_{\tau}^{-}U^n=\sum_{j=1}^{\ell}b_jU^{n-j} \label{eq:history-average}
\end{equation}
Separating the current value in \eqref{eq:normal-equation} gives the update
equation with the correct signs,
\begin{align}
(T_{b_0\tau}U^n,T_{b_0\tau}V)_{\Omega}
&=(U^{n-1}-\tau G\mathcal M_{\tau}^{-}U^n
+\tau\mathcal M_{\tau}F^n,T_{b_0\tau}V)_{\Omega} \label{eq:normal-update-equation}
\end{align}
The left-hand side is symmetric positive definite in the graph norm
\eqref{eq:test-operator-norm}. For a general stabilisation parameter
\(\delta\ne b_0\tau\), the SUPG system need not be symmetric.


\subsection{Acoustic-Wave Theta Method}
\label{subsec:acoustic-wave-equation}
For \(1/2\le\theta\le1\), set
\begin{equation}
\mathcal M_{\theta}U^n=\theta U^n+(1-\theta)U^{n-1} \label{eq:theta-average-acoustic}
\end{equation}
This is the \(\theta\)-method specialisation of the general average
\(\mathcal M_{\tau}\).
For \(U_h^n=(p_h^n,u_h^n)\), define the two current residual components by
\begin{align}
R_p^n&=D_{\tau}^{-}p_h^n+c\nabla\cdot\mathcal M_{\theta}u_h^n
-\mathcal M_{\theta}f^n \label{eq:acoustic-pressure-residual}\\
R_u^n&=D_{\tau}^{-}u_h^n+c\nabla\mathcal M_{\theta}p_h^n
-c\mathcal M_{\theta}g^n \label{eq:acoustic-velocity-residual}
\end{align}
The component form of \eqref{eq:supg-weak} is
\begin{equation}
(R_p^n,q_h+\delta c\nabla\cdot w_h)_{\Omega}
+(R_u^n,w_h+\delta c\nabla q_h)_{\Omega}=0
\quad \forall(q_h,w_h)\in\mathcal V_h \label{eq:acoustic-theta-variational}
\end{equation}
All terms in \eqref{eq:acoustic-theta-variational} are evaluated at the
current residual level \(n\); the previous level occurs only through
\(D_{\tau}^{-}\) and \(\mathcal M_{\theta}\).

When \(\delta=\theta\tau\), define
\begin{equation}
H^{n-1}=U_h^{n-1}-\tau(1-\theta)GU_h^{n-1}
+\tau\mathcal M_{\theta}F^n \label{eq:acoustic-theta-history}
\end{equation}
Equation \eqref{eq:normal-update-equation} becomes
\begin{align}
(p_h^n,q_h)_{\Omega}+(u_h^n,w_h)_{\Omega}
&+\delta^2c^2(\nabla p_h^n,\nabla q_h)_{\Omega} \label{eq:acoustic-theta-spd-pressure}\\
&+\delta^2c^2(\nabla\cdot u_h^n,\nabla\cdot w_h)_{\Omega}
=(H^{n-1},T_{\delta}(q_h,w_h))_{\Omega} \label{eq:acoustic-theta-spd-velocity}
\end{align}
The current pressure and scaled velocity are uncoupled on the left-hand side,
and the two diagonal blocks are symmetric positive definite. This conclusion
uses both the rescaling \(u=cv\) and the normal-equation choice
\(\delta=\theta\tau\).


\section{Abstract Stability and Error Analysis}
\label{sec:meta}
This section turns the common algebraic identity into a stability and error
framework. We first introduce sequence norms that accommodate the different
method-dependent energies. We then formulate an abstract energy hypothesis,
derive the error equation, and estimate its residual in both a strong norm and
a lifted dual norm. The final parameter balance makes the resulting spatial
order explicit. The later method-specific sections verify the energy
hypothesis by exact one-step identities or by discrete summation by parts.

\paragraph{Fundamental Algebraic Identity.}
In \eqref{eq:supg-weak}, choose \(W=S_{\delta}U^n\), which is admissible because
\(S_{\delta}U^n\in\mathcal V_h\). Since
\(P_{\delta}=T_{\delta}S_{\delta}\), multiplication by \(\tau\) gives
\begin{equation}
(\tau A_{\tau}U^n,P_{\delta}U^n)_{\Omega}
=(\tau\mathcal M_{\tau}F^n,P_{\delta}U^n)_{\Omega} \label{eq:abstract-tested-method}
\end{equation}
Using \eqref{eq:combined-test-expansion}, skew-symmetry, and commutation of
\(G\) with the time operators gives
\begin{align}
(\tau A_{\tau}U^n,P_{\delta}U^n)_{\Omega}
&=\tau(D_{\tau}^{-}U^n,\mathcal M_{\tau}U^n)_{\Omega}
+\tau\delta\|A_{\tau}U^n\|_{\Omega}^2 \label{eq:stability_meta}\\
&\quad+\tau\delta^2(G\mathcal M_{\tau}U^n,
GD_{\tau}^{-}U^n)_{\Omega} \label{eq:abstract-tested-expansion-graph}
\end{align}
In particular, the material-residual contribution is
\(\tau\delta\|A_{\tau}U^n\|_{\Omega}^2\). This fixes the energy and
dissipation scaling used below.


\subsection{Sequence Norms and Abstract Stability}
\label{subsec:method-adapted-norms}
The energy estimate must control both the solution at every computed time and
the accumulated dissipation. Their precise forms depend on the time-stepping
method, so we encode them abstractly.
Let \(e_{\tau}^n(V)\ge0\) be a method-dependent energy and let
\(d_{\tau}^n(V)\ge0\) be its dissipation. For a horizon
\(n_0\le m\le N\), define
\begin{equation}
\|V\|_{\mathcal E,m}^2
=\max_{n_0\le j\le m}e_{\tau}^j(V)
+\tau\sum_{n=n_0}^{m}d_{\tau}^n(V) \label{eq:method-adapted-sequence-norm}
\end{equation}
Thus the sequence norm contains only computed levels. The prescribed levels
are recorded separately in the start-up energy. Specifically, the start-up
energy and the discrete \(L^2\)-in-time norm are
\begin{align}
\mathcal I_{\tau}(V)&=\sum_{j=0}^{n_0-1}\|T_{\delta}V^j\|_{\Omega}^2 \label{eq:startup-energy}\\
\|R\|_{\ell_{\tau}^2(n_0,m;\mathcal H)}^2
&=\tau\sum_{n=n_0}^{m}\|R^n\|_{\Omega}^2 \label{eq:discrete-l2-time-norm}
\end{align}
The forcing term in the energy identity is naturally paired with
\(P_{\delta}V^n\). We therefore define the corresponding global lifted dual
norm by
\begin{equation}
\|R\|_{\mathcal E^*,N}
=\max_{n_0\le m\le N}
\sup_{\substack{V^n=0\text{ for }n\notin\{0,\ldots,m\}\\
\mathcal I_{\tau}(V)+\|V\|_{\mathcal E,m}^2>0}}
\frac{\tau\sum_{n=n_0}^{m}(R^n,P_{\delta}V^n)_{\Omega}}
{\bigl(\mathcal I_{\tau}(V)+\|V\|_{\mathcal E,m}^2\bigr)^{1/2}}
\label{eq:lifted-dual-norm}
\end{equation}
The maximum over horizons is needed because the sequence norm contains a
maximum in time. The finite start-up stencil is retained and controlled by
\(\mathcal I_{\tau}(V)\), and the test sequence is extended by zero outside
\(0\le n\le m\) before \(P_{\delta}\) is applied. The positive-denominator
condition removes sequences invisible to the horizon norm. The factor \(\tau\)
matches the tested equation \eqref{eq:abstract-tested-method}.

\paragraph{Energy Hypothesis.}
The hypothesis has two parts: coercivity of the method-dependent energy and
dissipation, and a forced energy estimate uniform in the terminal horizon. We
assume that there are constants \(c_E,C_E>0\), independent of \(h\), \(\tau\),
and \(m\), such that
\begin{align}
c_E\|T_{\delta}V^m\|_{\Omega}^2
&\le e_{\tau}^m(V) \label{eq:energy-coercivity}\\
c_E\delta\|A_{\tau}V^n\|_{\Omega}^2
&\le d_{\tau}^n(V) \label{eq:dissipation-coercivity}
\end{align}
and every solution of
\begin{equation}
(A_{\tau}V^n,T_{\delta}W)_{\Omega}
=(R^n,T_{\delta}W)_{\Omega}
\quad \forall W\in\mathcal V_h \label{eq:abstract-forced-equation}
\end{equation}
satisfies, for every \(m\),
\begin{align}
e_{\tau}^m(V)+\tau\sum_{n=n_0}^{m}d_{\tau}^n(V)
&\le C_E\mathcal I_{\tau}(V)
+C_E\tau\sum_{n=n_0}^{m}(R^n,P_{\delta}V^n)_{\Omega} \label{eq:abstract-energy-hypothesis-main}\\
&\quad+C_E\tau\|R\|_{\ell_{\tau}^2(n_0,m;\mathcal H)}^2 \label{eq:abstract-energy-hypothesis-defect}
\end{align}
The right-hand side separates start-up data, the forcing functional measured
by the lifted dual norm above, and a strong forcing defect. The last term
permits the endpoint defects produced when increments are estimated in a
multistep method; it is absent from the \(\theta\)-method identity.

\begin{thm}[Abstract stability]
\label{thm:meta-stability}\label{thm:stability-meta}
Assume \eqref{eq:energy-coercivity}--\eqref{eq:abstract-energy-hypothesis-defect}.
Then
\begin{align}
\|V\|_{\mathcal E,N}^2
&\lesssim\mathcal I_{\tau}(V)
+\|R\|_{\mathcal E^*,N}^2
+\tau\|R\|_{\ell_{\tau}^2(n_0,N;\mathcal H)}^2 \label{eq:abstract-stability-estimate}
\end{align}
where the hidden constant is independent of \(h\) and \(\tau\).
\end{thm}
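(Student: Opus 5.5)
Fix a horizon \(m\) with \(n_0\le m\le N\) and apply the energy hypothesis \eqref{eq:abstract-energy-hypothesis-main}--\eqref{eq:abstract-energy-hypothesis-defect} to the solution \(V\) of \eqref{eq:abstract-forced-equation}. The plan has four steps. First, bound the forcing functional by the lifted dual norm and absorb it with Young's inequality. Second, turn the left-hand side, which holds at each single level \(m\), into a bound on the maximum defining \(\|V\|_{\mathcal E,m}^2\). Third, absorb the resulting factor of the sequence norm. Fourth, pass to \(m=N\).

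\emph{Forcing functional.} Let \(\tilde V\) be \(V\) truncated by zero for \(n>m\); levels \(n<0\) play no role. The start-up values of \(\tilde V\) coincide with those of \(V\), so \(\mathcal I_\tau(\tilde V)=\mathcal I_\tau(V)\). The sum \(\tau\sum_{n=n_0}^m(R^n,P_\delta V^n)_\Omega\) only involves \(P_\delta V^n\) for \(n\le m\). The operator \(P_\delta\) contains only backward differences and averages, and these use only levels \(\le n\). Hence \(P_\delta V^n=P_\delta\tilde V^n\) for all \(n\le m\). The sequence norm \(\|\cdot\|_{\mathcal E,m}\) likewise involves only levels \(\le m\), assuming, as holds for all the energies used later, that \(e^j_\tau\) and \(d^n_\tau\) depend only on levels \(\le j\) and \(\le n\) respectively. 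Hence \(\tilde V\) is admissible in \eqref{eq:lifted-dual-norm}. If the denominator vanishes, the numerator is zero and there is nothing to prove. Otherwise
\[
\tau\sum_{n=n_0}^m(R^n,P_\delta V^n)_\Omega\le \|R\|_{\mathcal E^*,N}\bigl(\mathcal I_\tau(V)+\|V\|_{\mathcal E,m}^2\bigr)^{1/2}.
\]

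\emph{From single levels to the maximum.} The left-hand side of the hypothesis controls \(e^m_\tau(V)\) together with the dissipation sum up to \(m\). To obtain the maximum over \(j\le m\), apply the hypothesis at every \(j\le m\). The dissipation sum up to \(m\) dominates the sum up to \(j\) because \(d\ge0\), and the right-hand side at level \(j\) is estimated by the dual bound at horizon \(j\). This last point is where the maximum over horizons in \eqref{eq:lifted-dual-norm} is used. Also \(\|V\|_{\mathcal E,j}\le\|V\|_{\mathcal E,m}\) and \(\|R\|_{\ell^2_\tau(n_0,j)}\le\|R\|_{\ell^2_\tau(n_0,m)}\). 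Adding the estimate at the maximising level to the estimate at \(m\) gives
\[
\|V\|_{\mathcal E,m}^2\le 2C_E\Bigl(\mathcal I_\tau(V)+\|R\|_{\mathcal E^*,N}\bigl(\mathcal I_\tau(V)+\|V\|_{\mathcal E,m}^2\bigr)^{1/2}+\tau\|R\|^2_{\ell^2_\tau(n_0,m;\mathcal H)}\Bigr).
\]

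\emph{Absorption.} Young's inequality gives
\[
2C_E ab\le \tfrac12(\mathcal I_\tau+\|V\|_{\mathcal E,m}^2)+2C_E^2\|R\|_{\mathcal E^*,N}^2 .
\]
The \(\tfrac12\|V\|_{\mathcal E,m}^2\) term is absorbed into the left-hand side. This is legitimate because \(\|V\|_{\mathcal E,m}\) is a finite quantity for a finite sequence, so no a priori bound is needed. This yields \eqref{eq:abstract-stability-estimate} at horizon \(m\), with a constant depending only on \(C_E\); taking \(m=N\) completes the proof. The coercivity assumptions \eqref{eq:energy-coercivity}--\eqref{eq:dissipation-coercivity} are not needed for this inequality itself. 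They only ensure that the left-hand side controls \(\|T_\delta V\|\) and \(\delta\|A_\tau V\|^2\).

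The step needing the most care is the truncation argument. It must be checked that \(P_\delta\), through \(S_\delta\), looks only backward in time. It must also be checked that truncation leaves both \(\mathcal I_\tau\) and the horizon norm unchanged. The Young absorption itself is routine.
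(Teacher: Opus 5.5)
Your proof is correct and is essentially the paper's argument: apply the energy hypothesis at the energy-maximising level and at the final horizon, add the two inequalities, bound the forcing functional by the lifted dual norm, and absorb with Young's inequality. Your check that zero-truncation leaves \(P_\delta V^n\), \(\mathcal I_\tau(V)\) and the horizon norm unchanged makes explicit a step the paper uses tacitly. One small inconsistency remains: your claim that the numerator vanishes whenever the denominator does uses the coercivity \eqref{eq:energy-coercivity} to pass from \(e_\tau^j(V)=0\) to \(V^j=0\). So either drop the closing remark that coercivity is not needed, or replace that claim by the observation that when a denominator vanishes, the quantity estimated at that horizon is already zero.
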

\begin{proof}
Choose \(m_*\), \(n_0\le m_*\le N\), at which the computed-level energy is
maximal. The energy inequality with \(m=m_*\) controls the maximum-energy
term in \eqref{eq:method-adapted-sequence-norm}, while the inequality with
\(m=N\) controls the full dissipation sum. By
\eqref{eq:lifted-dual-norm}, the forcing contribution is bounded by
\begin{equation}
\tau\sum_{n=n_0}^{m}(R^n,P_{\delta}V^n)_{\Omega}
\le\|R\|_{\mathcal E^*,N}
\bigl(\mathcal I_{\tau}(V)+\|V\|_{\mathcal E,m}^2\bigr)^{1/2}
\label{eq:dual-forcing-bound}
\end{equation}
Add the two horizon inequalities. Their left-hand sides control
\(\|V\|_{\mathcal E,N}^2\) up to a fixed constant. After bounding each horizon
norm by \(\|V\|_{\mathcal E,N}\), Young's inequality with a sufficiently small
parameter absorbs the resulting contribution to
\(\|V\|_{\mathcal E,N}^2\). The accompanying start-up factor is bounded by
\(C\mathcal I_{\tau}(V)\). This proves
\eqref{eq:abstract-stability-estimate}.
\end{proof}


\subsection{Error Equation and Residual Bounds}
\label{subsec:residual-error-equation}
We split the total error into an approximation error \(\eta^n\), controlled by
the projection estimates, and a discrete error \(\phi^n\), to which the
abstract stability theorem will be applied. Set
\begin{equation}
W^n=\Pi_hU(t^n),\qquad
\eta^n=U(t^n)-W^n,\qquad
\phi^n=U_h^n-W^n \label{eq:error-splitting}
\end{equation}
By \eqref{eq:local-truncation-residual}, the sampled exact solution satisfies
\begin{equation}
A_{\tau}U(t^n)=\mathcal M_{\tau}F(t^n)+\rho^n \label{eq:sampled-exact-equation}
\end{equation}
Subtracting the projected exact equation from the numerical method gives
\begin{equation}
(A_{\tau}\phi^n,T_{\delta}V)_{\Omega}
=(R^n,T_{\delta}V)_{\Omega}
\quad \forall V\in\mathcal V_h \label{eq:error-equation}
\end{equation}
with the residual
\begin{align}
R^n&=\mathcal M_{\tau}F^n-A_{\tau}W^n \label{eq:discrete-error-residual-definition}\\
&=A_{\tau}\eta^n-\rho^n \label{eq:discrete-error-residual-splitting}
\end{align}
The sign in \eqref{eq:discrete-error-residual-splitting} follows from
\eqref{eq:sampled-exact-equation}; in particular, the truncation residual
enters with a minus sign.

\paragraph{Strong Residual Bound.}
The abstract stability estimate contains the strong residual norm multiplied
by \(\tau^{1/2}\). The following bound controls that term; the sharper spatial
rate will come from the dual estimate below.
For \(v\in H^1(0,T;\mathcal H)\), the endpoint trace inequality on each
interval \((t^{n-1},t^n)\), followed by summation, gives
\begin{equation}
\tau\sum_{n=n_0}^{N}\|v(t^n)\|_{\Omega}^2
\lesssim \|v\|_{L^2(0,T;\mathcal H)}^2
+\tau^2\|\partial_tv\|_{L^2(0,T;\mathcal H)}^2
\label{eq:discrete-time-trace}
\end{equation}
For each fixed shift \(j\) in the finite stencil, changing the summation
index in \eqref{eq:discrete-time-trace} gives
\begin{equation}
\tau\sum_{n=n_0}^{N}\|v(t^{n-j})\|_{\Omega}^2
\lesssim \|v\|_{L^2(0,T;\mathcal H)}^2
+\tau^2\|\partial_tv\|_{L^2(0,T;\mathcal H)}^2
\label{eq:shifted-discrete-time-trace}
\end{equation}
Here \(n_0\ge\ell\), so every sampled time belongs to \([0,T]\). Since the
number of shifts and the method coefficients are fixed, it follows that
\begin{equation}
\|\mathcal M_{\tau}v\|_{\ell_{\tau}^2(n_0,N;\mathcal H)}
\lesssim \|v\|_{L^2(0,T;\mathcal H)}
+\tau\|\partial_tv\|_{L^2(0,T;\mathcal H)}
\label{eq:method-average-discrete-time-trace}
\end{equation}
Apply \eqref{eq:method-average-discrete-time-trace} to
\(v=G\eta=G(\mathrm{Id}-\Pi_h)U\), which belongs to
\(H^1(0,T;\mathcal H)\) under the regularity used below. The time
independence of \(G\) and \(\Pi_h\), together with
\eqref{eq:spatial-approximation} applied to \(U\) and \(\partial_tU\), gives
\begin{align}
\|G\mathcal M_{\tau}\eta\|_{\ell_{\tau}^2(n_0,N;\mathcal H)}
&\lesssim h^{r-1}\|U\|_{L^2(0,T;H^r(\Omega))} \label{eq:strong-residual-space-part}\\
&\quad+\tau h^{r-1}
\|\partial_tU\|_{L^2(0,T;H^r(\Omega))}
\label{eq:strong-residual-space-trace-part}
\end{align}
On the other hand, writing \(D_{\tau}^{-}\eta^n\) as the average of
\((\mathrm{Id}-\Pi_h)\partial_tU\) over \((t^{n-1},t^n)\), and then using the
Cauchy--Schwarz inequality, gives
\begin{equation}
\|D_{\tau}^{-}\eta\|_{\ell_{\tau}^2(n_0,N;\mathcal H)}
\lesssim h^r\|\partial_tU\|_{L^2(0,T;H^r(\Omega))}
\label{eq:strong-residual-time-part}
\end{equation}
Consequently,
\begin{align}
\|A_{\tau}\eta\|_{\ell_{\tau}^2(n_0,N;\mathcal H)}
&\lesssim h^{r-1}\|U\|_{L^2(0,T;H^r(\Omega))}
\label{eq:strong-projection-residual-space}\\
&\quad+\bigl(h^r+\tau h^{r-1}\bigr)
\|\partial_tU\|_{L^2(0,T;H^r(\Omega))}
\label{eq:strong-projection-residual-time}
\end{align}
Together with \eqref{eq:global-truncation-bound}, this proves
\begin{align}
\|R\|_{\ell_{\tau}^2(n_0,N;\mathcal H)}
&\lesssim h^{r-1}\|U\|_{L^2(0,T;H^r(\Omega))}
+\bigl(h^r+\tau h^{r-1}\bigr)
\|\partial_tU\|_{L^2(0,T;H^r(\Omega))}
\label{eq:strong-residual-bound-space}\\
&\quad+\tau^\nu\mathcal C_{T,\nu+1}(U,F) \label{eq:strong-residual-bound-time}
\end{align}
No factor \(\tau^{-1}h^r\) occurs because the time difference of the
projection error is estimated as a difference quotient of a smooth function.

\paragraph{Dual Residual Bound.}
The gain of one half order is obtained by estimating the residual functional
itself, rather than its strong norm. Orthogonality of the \(L^2\)-projection
allows the functional to be reorganised in terms of precisely the quantities
controlled by the energy and dissipation. Define
\begin{align}
\mathcal N_{r,2}(U)
&=\sum_{j=0}^{2}\|\partial_t^jU\|_{L^2(0,T;H^r(\Omega))}
\label{eq:dual-regularity-l2}\\
&\quad+\sum_{j=0}^{1}\|\partial_t^jU\|_{L^{\infty}(0,T;H^r(\Omega))}
\label{eq:dual-regularity-uniform}
\end{align}

\begin{prop}[Dual residual estimate]
\label{prop:dual-residual-bound}
Assume \eqref{eq:energy-coercivity}--\eqref{eq:dissipation-coercivity},
\(0<\delta\le1\), the approximation estimate
\eqref{eq:spatial-approximation} for \(U\), \(\partial_tU\), and
\(\partial_t^2U\), and the differentiated consistency estimates
\eqref{eq:maximum-truncation-bound}--\eqref{eq:differentiated-truncation-bound}.
Then
\begin{align}
\|A_{\tau}\eta-\rho\|_{\mathcal E^*,N}
&\lesssim
\bigl((1+T^{1/2})\delta^{1/2}h^{r-1}
+\delta^{-1/2}h^r\bigr)\mathcal N_{r,2}(U)
\label{eq:dual-residual-bound-space}\\
&\quad+(1+T^{1/2})\tau^\nu
\mathcal C^{\sharp}_{T,\nu+2}(U,F)
\label{eq:dual-residual-bound-time}
\end{align}
The hidden constant depends on the fixed time stencil and on the coercivity
constants, but not on \(h\), \(\tau\), or \(\delta\).
\end{prop}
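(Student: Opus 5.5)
The plan is to bound the functional \(\tau\sum_{n=n_0}^{m}(A_{\tau}\eta^n-\rho^n,P_{\delta}V^n)_{\Omega}\) for an arbitrary test sequence \(V\), vanishing outside \(0\le n\le m\). Every term should be estimated by \(\bigl(\mathcal I_{\tau}(V)+\|V\|_{\mathcal E,m}^2\bigr)^{1/2}\) times the right-hand side of \eqref{eq:dual-residual-bound-space}--\eqref{eq:dual-residual-bound-time}. By \eqref{eq:energy-coercivity}--\eqref{eq:dissipation-coercivity}, the available quantities are:
\begin{itemize}
\item \(\max_n\|T_{\delta}V^n\|_{\Omega}\), which also controls \(\|V^n\|_{\Omega}\), \(\delta\|GV^n\|_{\Omega}\) and \(\|(\mathrm{Id}-\delta G)V^n\|_{\Omega}\) by skew-symmetry;
\item \(\bigl(\tau\sum_n\delta\|A_{\tau}V^n\|_{\Omega}^2\bigr)^{1/2}\).
\end{itemize}
Start-up levels enter only through \(\mathcal I_{\tau}(V)\).

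\emph{Projection part.} Since \(\eta^n\), \(D_{\tau}^{-}\eta^n\) and \(\mathcal M_{\tau}\eta^n\) lie in the range of \(\mathrm{Id}-\Pi_h\), they are \(L^2\)-orthogonal to \(\mathcal V_h\). Write \(P_{\delta}V=S_{\delta}V+\delta GS_{\delta}V\) with \(S_{\delta}V\in\mathcal V_h\). The term \((D_{\tau}^{-}\eta,S_{\delta}V)_{\Omega}\) vanishes.

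For \((G\mathcal M_{\tau}\eta,S_{\delta}V)_{\Omega}\), I use skew-symmetry and \(G\mathcal M_{\tau}V=A_{\tau}V-D_{\tau}^{-}V\):
\begin{equation*}
(G\mathcal M_{\tau}\eta,\mathcal M_{\tau}V)_{\Omega}
=-(\mathcal M_{\tau}\eta,A_{\tau}V)_{\Omega}+(\mathcal M_{\tau}\eta,D_{\tau}^{-}V)_{\Omega}.
\end{equation*}
The last term vanishes by orthogonality. The first is bounded by \(\delta^{-1/2}h^r\cdot\delta^{1/2}\|A_{\tau}V\|_{\Omega}\), which produces the \(\delta^{-1/2}h^r\) term. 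The remaining piece \(\delta(G\mathcal M_{\tau}\eta,D_{\tau}^{-}V)_{\Omega}\) is handled together with the next group.

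For all terms of the form \(\delta(X,GS_{\delta}V)_{\Omega}\), with \(X=D_{\tau}^{-}\eta\) or \(X=G\mathcal M_{\tau}\eta\), I use the rewriting
\begin{equation*}
GS_{\delta}V=A_{\tau}V-(\mathrm{Id}-\delta G)D_{\tau}^{-}V.
\end{equation*}
The \(A_{\tau}V\) piece gives \(\delta\|X\|_{\Omega}\|A_{\tau}V\|_{\Omega}\le\delta^{1/2}\|X\|_{\Omega}\cdot\delta^{1/2}\|A_{\tau}V\|_{\Omega}\). Since \(\|G\mathcal M_{\tau}\eta\|\lesssim h^{r-1}\) and \(\|D_{\tau}^{-}\eta\|\lesssim h^r\), this yields \(\delta^{1/2}h^{r-1}\). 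In the \((\mathrm{Id}-\delta G)D_{\tau}^{-}V\) piece I apply the summation by parts \eqref{eq:time-summation-by-parts-endpoints}--\eqref{eq:time-summation-by-parts-bulk}. This moves the difference onto \(X\) and leaves \((\mathrm{Id}-\delta G)V^n\), which is controlled pointwise by \(\|T_{\delta}V^n\|_{\Omega}\).

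The bulk term is then bounded by \(\delta\,T^{1/2}\|D_{\tau}^{+}X\|_{\ell^2_{\tau}}\max_n\|T_{\delta}V^n\|_{\Omega}\). Here \(D_{\tau}^{+}X\) is a difference quotient of \(\partial_t\) or \(\partial_t^2\) of \((\mathrm{Id}-\Pi_h)U\), so it is bounded by \(h^{r-1}\mathcal N_{r,2}(U)\). Since \(\delta\le1\), the product \(\delta T^{1/2}h^{r-1}\) is at most \(T^{1/2}\delta^{1/2}h^{r-1}\). The endpoint terms at \(n=m\) and at the start-up level involve \(\delta\|X\|_{\Omega}\) at single times. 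These are bounded through the \(L^{\infty}(0,T;H^r)\) parts of \(\mathcal N_{r,2}\), against \(\max\|T_{\delta}V\|\) and \(\mathcal I_{\tau}(V)^{1/2}\). The terms \(\delta(G\mathcal M_{\tau}\eta,D_{\tau}^{-}V)\) left over above are handled by the same summation by parts.

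\emph{Truncation part.} Expand
\begin{equation*}
(\rho,P_{\delta}V)_{\Omega}=(\rho,T_{\delta}\mathcal M_{\tau}V)_{\Omega}+\delta(\rho,T_{\delta}D_{\tau}^{-}V)_{\Omega}.
\end{equation*}
The first term is bounded by \(\tau\sum_n\|\rho^n\|_{\Omega}\max_n\|T_{\delta}V^n\|_{\Omega}\lesssim T^{1/2}\|\rho\|_{\ell^2_{\tau}}\max_n\|T_{\delta}V^n\|_{\Omega}\), and \eqref{eq:global-truncation-bound} gives \(\tau^\nu\). In the second term I sum by parts. The bulk term then uses \eqref{eq:differentiated-truncation-bound} with \(T^{1/2}\), and the endpoint terms use \eqref{eq:maximum-truncation-bound}, paired with \(\delta\|T_{\delta}V\|\le\|T_{\delta}V\|\). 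Taking the maximum over the horizon \(m\) gives \eqref{eq:dual-residual-bound-time}.

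\emph{Main obstacle.} The delicate step is the \(\delta(G\mathcal M_{\tau}\eta,G\,\cdot\,)\) terms. A naive bound \(\delta\|G\mathcal M_{\tau}\eta\|\,\|G\mathcal M_{\tau}V\|\) is not controlled by the energy. The rewriting of \(GS_{\delta}V\) through \(A_{\tau}V\) and \((\mathrm{Id}-\delta G)D_{\tau}^{-}V\), followed by summation by parts, must be done carefully. In particular, the multistep stencil produces several shifted endpoint terms near \(n_0\) and \(m\), and each must be matched either to \(\mathcal I_{\tau}(V)\) or to the computed-level energy.
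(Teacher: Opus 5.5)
Your proposal is correct and follows essentially the paper's appendix argument. It uses the same ingredients: projection orthogonality plus skew-symmetry to produce \(-(\mathcal M_{\tau}\eta,A_{\tau}V)_{\Omega}\), the dissipation bound for every term paired with \(A_{\tau}V\), and summation by parts with both endpoint terms retained (for \(\eta\) and for \(\rho\)). The difference is only bookkeeping: the paper expands \(P_{\delta}=\mathcal M_{\tau}+\delta A_{\tau}+\delta^2GD_{\tau}^{-}\), whereas in your splitting the order-\(\delta\) terms you sum by parts cancel exactly, since \(\delta(G\mathcal M_{\tau}\eta,D_{\tau}^{-}V)_{\Omega}-\delta(A_{\tau}\eta,D_{\tau}^{-}V)_{\Omega}=-\delta(D_{\tau}^{-}\eta,D_{\tau}^{-}V)_{\Omega}=0\). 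So only \(\delta^2(A_{\tau}\eta,GD_{\tau}^{-}V)_{\Omega}\) needs summation by parts, and your extra summations are redundant but harmless.
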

\begin{proof}
Because the projection is time independent, the projection error and all its
time differences and method averages are \(L^2\)-orthogonal to
\(\mathcal V_h\). Expanding
\(P_{\delta}=\mathcal M_{\tau}+\delta A_{\tau}
+\delta^2GD_{\tau}^{-}\) therefore gives the exact identity
\begin{equation}
(A_{\tau}\eta^n,P_{\delta}V^n)_{\Omega}
=(\delta A_{\tau}\eta^n-\mathcal M_{\tau}\eta^n,
A_{\tau}V^n)_{\Omega}
+\delta^2(A_{\tau}\eta^n,GD_{\tau}^{-}V^n)_{\Omega}
\label{eq:dual-projection-functional-identity}
\end{equation}
The first term is controlled directly by the dissipation quantity
\(\delta^{1/2}A_{\tau}V\). In the last term,
\(GD_{\tau}^{-}V=D_{\tau}^{-}GV\), so discrete summation by parts moves the
time difference onto \(A_{\tau}\eta\), with both endpoint terms retained.
The same decomposition applied to \((\rho,P_{\delta}V)_{\Omega}\) moves the
last time difference onto \(\rho\). Coercivity controls
\(\delta^{1/2}A_{\tau}V\), while
\eqref{eq:test-operator-norm} controls \(V\) and \(\delta GV\) at every
computed and start-up level. Approximation and differentiated consistency then
give \eqref{eq:dual-residual-bound-space}--\eqref{eq:dual-residual-bound-time}.
The complete horizon-uniform calculation is given in Appendix
\ref{app:dual-residual-proof}.
\end{proof}

\begin{thm}[Abstract error estimate]
\label{thm:meta-half}
Assume Theorem \ref{thm:meta-stability}, the regularity hypotheses of
Proposition \ref{prop:dual-residual-bound}, and projected start-up data
\(U_h^j=\Pi_hU(t^j)\) for \(0\le j<n_0\). Then
\begin{align}
\max_{n_0\le n\le N}\|T_{\delta}(U_h^n-U(t^n))\|_{\Omega}
&\lesssim (h^r+\delta h^{r-1})
\|U\|_{L^{\infty}(0,T;H^r(\Omega))} \label{eq:abstract-error-estimate-space}\\
&\quad+
\bigl((1+T^{1/2})\delta^{1/2}h^{r-1}+\delta^{-1/2}h^r\bigr)
\mathcal N_{r,2}(U) \label{eq:abstract-error-estimate-dual}\\
&\quad+(1+T^{1/2})\tau^\nu\mathcal C^{\sharp}_{T,\nu+2}(U,F)
\label{eq:abstract-error-estimate-truncation}\\
&\quad+\tau^{1/2}\bigl(h^r+\tau h^{r-1}\bigr)
\|\partial_tU\|_{L^2(0,T;H^r(\Omega))}
\label{eq:abstract-error-estimate-strong-time}\\
&\quad+\tau^{1/2}h^{r-1}
\|U\|_{L^2(0,T;H^r(\Omega))}
\label{eq:abstract-error-estimate-strong-space}\\
&\quad+\tau^{\nu+1/2}\mathcal C^{\sharp}_{T,\nu+2}(U,F)
\label{eq:abstract-error-estimate-time}
\end{align}
The same right-hand side controls the accumulated material-residual
dissipation in the form
\(\bigl(\tau\delta\sum_n\|A_{\tau}\phi^n\|_{\Omega}^2\bigr)^{1/2}\).
\end{thm}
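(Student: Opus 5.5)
The proof splits the total error as \(U_h^n-U(t^n)=\phi^n-\eta^n\) using \eqref{eq:error-splitting}, and uses the triangle inequality together with the graph-norm identity \eqref{eq:test-operator-norm}:
\[
\|T_{\delta}(U_h^n-U(t^n))\|_{\Omega}\le\|T_{\delta}\phi^n\|_{\Omega}+\|T_{\delta}\eta^n\|_{\Omega}.
\]
The projection part is immediate. Since \(\|T_{\delta}\eta^n\|_{\Omega}\le\|\eta^n\|_{\Omega}+\delta\|G\eta^n\|_{\Omega}\), the approximation estimate \eqref{eq:spatial-approximation} at the fixed time \(t^n\) gives the bound \((h^r+\delta h^{r-1})\|U\|_{L^\infty(0,T;H^r(\Omega))}\) uniformly in \(n\). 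This is the first line \eqref{eq:abstract-error-estimate-space}.

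For the discrete error, \(\phi\) solves the forced equation \eqref{eq:error-equation}, which has the form \eqref{eq:abstract-forced-equation} with \(R=A_{\tau}\eta-\rho\) from \eqref{eq:discrete-error-residual-splitting}. Theorem \ref{thm:meta-stability} therefore applies with \(V=\phi\). The start-up data are projected, so \(\phi^j=\Pi_hU(t^j)-\Pi_hU(t^j)=0\) for \(0\le j<n_0\), and hence \(\mathcal I_{\tau}(\phi)=0\). By the energy coercivity \eqref{eq:energy-coercivity},
\[
c_E\max_{n_0\le n\le N}\|T_{\delta}\phi^n\|_{\Omega}^2\le\|\phi\|_{\mathcal E,N}^2\lesssim\|R\|_{\mathcal E^*,N}^2+\tau\|R\|_{\ell_{\tau}^2(n_0,N;\mathcal H)}^2 .
\]
The dissipation coercivity \eqref{eq:dissipation-coercivity} likewise bounds \(c_E\tau\delta\sum_n\|A_{\tau}\phi^n\|_{\Omega}^2\) by \(\|\phi\|_{\mathcal E,N}^2\). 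This gives the final assertion about the accumulated material-residual dissipation once the right-hand side is estimated.

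Both residual norms are then estimated with earlier results. Proposition \ref{prop:dual-residual-bound} bounds \(\|A_{\tau}\eta-\rho\|_{\mathcal E^*,N}\) and yields lines \eqref{eq:abstract-error-estimate-dual} and \eqref{eq:abstract-error-estimate-truncation}. Its hypotheses (\(0<\delta\le1\), approximation for \(U,\partial_tU,\partial_t^2U\), and differentiated consistency) are exactly the regularity hypotheses we assume. For the strong term we take square roots and use \eqref{eq:strong-residual-bound-space}--\eqref{eq:strong-residual-bound-time}, multiplied by \(\tau^{1/2}\), which gives \eqref{eq:abstract-error-estimate-strong-time} and \eqref{eq:abstract-error-estimate-strong-space}. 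The only care needed is the temporal piece \(\tau^{\nu+1/2}\mathcal C_{T,\nu+1}(U,F)\). It must be dominated by \(\tau^{\nu+1/2}\mathcal C^{\sharp}_{T,\nu+2}(U,F)\), which holds because \(\mathcal C^{\sharp}_{T,\nu+2}\) contains all derivatives entering \(\mathcal C_{T,\nu+1}\) (one more time derivative plus endpoint regularity). If this domination were not built into the definition, one would instead use \eqref{eq:maximum-truncation-bound} to get \(\|\rho\|_{\ell^2_\tau}\le (T^{1/2})\max_n\|\rho^n\|_{\Omega}\), which is absorbed into the \((1+T^{1/2})\) term. Finally, \(\sqrt{a+b}\le\sqrt a+\sqrt b\) turns the squared bound into the stated linear one.

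The argument is essentially bookkeeping, since the analytical work is in Theorem \ref{thm:meta-stability} and Proposition \ref{prop:dual-residual-bound}. The main point needing attention is that the constants are independent of \(m\) and \(\delta\). This holds because the dual norm already takes the maximum over horizons, and the stability constant depends only on \(c_E\) and \(C_E\). Another point to check is that vanishing start-up data are compatible with the dual-norm definition, which requires test sequences to vanish outside \(\{0,\dots,m\}\) and not at the start-up levels. This causes no issue, because the start-up levels are only bounded via \(\mathcal I_{\tau}\) in the denominator.
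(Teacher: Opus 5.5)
Your proposal is correct and follows the paper's proof. You apply Theorem~\ref{thm:meta-stability} to the error equation with \(\mathcal I_{\tau}(\phi)=0\), bound the dual term by Proposition~\ref{prop:dual-residual-bound} and the strong term by \(\tau^{1/2}\) times \eqref{eq:strong-residual-bound-space}--\eqref{eq:strong-residual-bound-time}, and add the projection graph-norm bound through \(U_h^n-U(t^n)=\phi^n-\eta^n\). You also make explicit the domination \(\mathcal C_{T,\nu+1}(U,F)\lesssim\mathcal C^{\sharp}_{T,\nu+2}(U,F)\) that the paper uses tacitly for \eqref{eq:abstract-error-estimate-time}; this is the right justification, whereas your fallback via \eqref{eq:maximum-truncation-bound} only fits into the \((1+T^{1/2})\tau^\nu\) term if \(\tau\lesssim1\).
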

\begin{proof}
Apply Theorem \ref{thm:meta-stability} to \eqref{eq:error-equation}. The
start-up term vanishes, and
\eqref{eq:discrete-error-residual-splitting},
\eqref{eq:strong-residual-bound-space}--\eqref{eq:strong-residual-bound-time},
and Proposition \ref{prop:dual-residual-bound}
bound the strong and dual residual contributions. Finally,
\begin{equation}
U_h^n-U(t^n)=\phi^n-\eta^n \label{eq:total-error-identity}
\end{equation}
and \eqref{eq:spatial-approximation} bounds the graph norm of \(\eta^n\).
More precisely, the time-uniform projection contribution is
\begin{equation}
\max_{0\le n\le N}\|T_{\delta}\eta^n\|_{\Omega}
\lesssim(h^r+\delta h^{r-1})
\|U\|_{L^{\infty}(0,T;H^r(\Omega))} \label{eq:projection-graph-norm-bound}
\end{equation}
The terms in \eqref{eq:abstract-error-estimate-strong-time}--
\eqref{eq:abstract-error-estimate-time} come from the explicit
\(\tau^{1/2}\|R\|_{\ell_{\tau}^2}\) term in
\eqref{eq:abstract-stability-estimate}. In particular,
\eqref{eq:abstract-error-estimate-strong-time} contains the additional term
\(\tau^{3/2}h^{r-1}
\|\partial_tU\|_{L^2(0,T;H^r(\Omega))}\); no parameter relation is used to
discard it.
\end{proof}


\subsection{Parameter Balance}
\label{subsec:parameter-balance}
We now specialise the estimate to the parameter regimes used later.
For the standard stabilised choice \(\delta\sim h\), the spatial part of
\eqref{eq:abstract-error-estimate-space}--\eqref{eq:abstract-error-estimate-time}
is of order \(h^{r-1/2}\) when \(\tau\lesssim h\). The projection term is
then of the higher order \(h^r\). Thus, with \(r=k+1\), smooth degree-\(k\)
approximation gives order \(h^{k+1/2}\) under the stated regularity
assumptions.

The AM3--AM5 assumptions in Section \ref{sec:adams-moulton} and the
AB3--AB4 assumption in Section \ref{sec:ab} already include
\(\tau\lesssim h\). For those methods,
\(\tau h^{r-1}\|\partial_tU\|_{L^2(0,T;H^r(\Omega))}\) is therefore
controlled by
\(h^r\|\partial_tU\|_{L^2(0,T;H^r(\Omega))}\), so the previously stated
asymptotic orders are unchanged. No condition \(\tau\lesssim h\) is added
to the abstract stability theorem or to \(\theta\)-method stability. The
\(\theta\)-method error estimate retains the explicit
\(\tau^{3/2}h^{r-1}\)-term unless a separately stated parameter regime
justifies its absorption. In particular, under
\(\delta\sim h\) and \(\tau\lesssim h\), the
\(h^{r-1/2}\)-conclusion is unchanged.

For the normal-equation choice \(\delta=b_0\tau\), the two dual contributions
are \(\tau^{1/2}h^{r-1}\) and \(\tau^{-1/2}h^r\). Formally balancing the latter
with the time error gives
\begin{equation}
\tau=h^{r/(\nu+1/2)} \label{eq:normal-equation-parameter-balance}
\end{equation}
This is an accuracy balance; stability must still be checked for the chosen
time-stepping method. The Adams--Moulton theorem below includes the
normal-equation choice under its method-dependent CFL conditions, while
Proposition \ref{prop:dual-residual-bound} applies in both parameter regimes.


\section{An A-Stable Scheme: The Theta Method}
\label{sec:theta-method}
The \(\theta\)-family gives the simplest realisation of the abstract framework: its
energy balance follows from a one-step identity and requires no CFL condition.
It includes Crank--Nicolson at \(\theta=1/2\) and backward Euler at
\(\theta=1\). Throughout this section, let \(1/2\le\theta\le1\) and set
\begin{align}
D_{\tau}^{-}U^n&=\tau^{-1}(U^n-U^{n-1}) \label{eq:theta-time-difference}\\
\mathcal M_{\theta}U^n&=\theta U^n+(1-\theta)U^{n-1} \label{eq:theta-method-average}\\
\delta&=\theta\tau \label{eq:theta-stabilisation-parameter}
\end{align}
Here the current method coefficient is \(b_0=\theta\), so the last choice is
the normal-equation value \(b_0\tau\). The corresponding acoustic-wave
realisation is given in Section \ref{subsec:acoustic-wave-equation}.

\paragraph{Energy and Dissipation.}
The energy is one half of the squared graph norm associated with the normal
equation.
Define
\begin{equation}
e_{\tau}^n(U)=\frac12\|T_{\delta}U^n\|_{\Omega}^2
=\frac12\|U^n\|_{\Omega}^2
+\frac12\delta^2\|GU^n\|_{\Omega}^2 \label{eq:theta-energy}
\end{equation}
and
\begin{align}
d_{\tau}^n(U)
&=\delta\|A_{\tau}U^n\|_{\Omega}^2
+\frac{2\theta-1}{2}\tau\|D_{\tau}^{-}U^n\|_{\Omega}^2 \label{eq:theta-dissipation-solution}\\
&\quad+\frac{2\theta-1}{2}\tau\delta^2
\|GD_{\tau}^{-}U^n\|_{\Omega}^2 \label{eq:theta-dissipation-graph}
\end{align}
Both quantities are nonnegative for \(1/2\le\theta\le1\). Consequently,
\eqref{eq:energy-coercivity}--\eqref{eq:dissipation-coercivity} hold with the
common constant \(c_E=1/2\). At the Crank--Nicolson endpoint \(\theta=1/2\),
the two terms proportional to \(2\theta-1\) vanish, while the material-residual term
\(\delta\|A_{\tau}U^n\|_{\Omega}^2\) remains.

\paragraph{Increment Identities.}
The elementary Hilbert-space identity for the \(\theta\)-average gives
\begin{align}
\tau(D_{\tau}^{-}U^n,\mathcal M_{\theta}U^n)_{\Omega}
&=\frac12\bigl(\|U^n\|_{\Omega}^2-\|U^{n-1}\|_{\Omega}^2\bigr) \label{eq:theta-increment-energy}\\
&\quad+\frac{2\theta-1}{2}\tau^2
\|D_{\tau}^{-}U^n\|_{\Omega}^2 \label{eq:theta-increment-dissipation}
\end{align}
Applying the same identity to \(GU^n\) yields
\begin{align}
\tau\delta^2(G\mathcal M_{\theta}U^n,
GD_{\tau}^{-}U^n)_{\Omega}
&=\frac12\delta^2
\bigl(\|GU^n\|_{\Omega}^2-\|GU^{n-1}\|_{\Omega}^2\bigr) \label{eq:theta-graph-increment-energy}\\
&\quad+\frac{2\theta-1}{2}\tau^2\delta^2
\|GD_{\tau}^{-}U^n\|_{\Omega}^2 \label{eq:theta-graph-increment-dissipation}
\end{align}
The two increment identities convert the first and third terms in the
fundamental algebraic identity into telescoping energy differences and
nonnegative dissipation. In particular, the factor \(\tau\delta^2\) in
\eqref{eq:theta-graph-increment-energy} matches
\eqref{eq:abstract-tested-expansion-graph}.

\begin{prop}[Energy--dissipation identity for the Theta method]
\label{prop:theta-energy}
The solution of \eqref{eq:supg-weak} with
\eqref{eq:theta-time-difference}--\eqref{eq:theta-stabilisation-parameter}
satisfies
\begin{equation}
e_{\tau}^n(U)-e_{\tau}^{n-1}(U)+\tau d_{\tau}^n(U)
=(\tau\mathcal M_{\theta}F^n,P_{\delta}U^n)_{\Omega} \label{eq:theta-energy-balance}
\end{equation}
\end{prop}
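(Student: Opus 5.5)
The plan is to test \eqref{eq:supg-weak} with \(W=S_{\delta}U^n\), which is admissible since \(S_{\delta}U^n\in\mathcal V_h\). Multiplying by \(\tau\) gives \eqref{eq:abstract-tested-method}. The right-hand side is then already \((\tau\mathcal M_{\theta}F^n,P_{\delta}U^n)_{\Omega}\), so only the left-hand side needs work. For it we invoke the fundamental algebraic identity \eqref{eq:stability_meta}--\eqref{eq:abstract-tested-expansion-graph} with \(\mathcal M_{\tau}=\mathcal M_{\theta}\). This splits \((\tau A_{\tau}U^n,P_{\delta}U^n)_{\Omega}\) into three pieces: the \(L^2\) increment term \(\tau(D_{\tau}^{-}U^n,\mathcal M_{\theta}U^n)_{\Omega}\), the material-residual term \(\tau\delta\|A_{\tau}U^n\|_{\Omega}^2\), and the graph increment term \(\tau\delta^2(G\mathcal M_{\theta}U^n,GD_{\tau}^{-}U^n)_{\Omega}\).

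I would first justify the fundamental identity, since it carries all the structure. Expanding \(P_{\delta}\) via \eqref{eq:combined-test-expansion} and writing \(P_{\delta}=\mathcal M_{\theta}+\delta A_{\tau}+\delta^2GD_{\tau}^{-}\), I get
\[
(A_{\tau}U,P_{\delta}U)_{\Omega}=(A_{\tau}U,\mathcal M_{\theta}U)_{\Omega}+\delta\|A_{\tau}U\|_{\Omega}^2+\delta^2(A_{\tau}U,GD_{\tau}^{-}U)_{\Omega}.
\]
In the first term, \((G\mathcal M_{\theta}U,\mathcal M_{\theta}U)_{\Omega}=0\) by \eqref{eq:spatial-operator-null}, leaving \((D_{\tau}^{-}U,\mathcal M_{\theta}U)_{\Omega}\). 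In the third term, \((D_{\tau}^{-}U,GD_{\tau}^{-}U)_{\Omega}=0\) by the same property, leaving \((G\mathcal M_{\theta}U,GD_{\tau}^{-}U)_{\Omega}\). Here I use that \(G\) commutes with the time operators because it is time independent.

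Next I verify the increment identity \eqref{eq:theta-increment-energy}--\eqref{eq:theta-increment-dissipation}. Write \(\mathcal M_{\theta}U^n=\tfrac12(U^n+U^{n-1})+(\theta-\tfrac12)(U^n-U^{n-1})\). Pairing with \(U^n-U^{n-1}\) gives \(\tfrac12(\|U^n\|^2-\|U^{n-1}\|^2)+(\theta-\tfrac12)\|U^n-U^{n-1}\|^2\), and the last term equals \(\tfrac{2\theta-1}{2}\tau^2\|D_{\tau}^{-}U^n\|^2\). Applying the same computation to the sequence \(GU^n\) and multiplying by \(\delta^2\) yields \eqref{eq:theta-graph-increment-energy}--\eqref{eq:theta-graph-increment-dissipation}.

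Finally I substitute both increment identities into the three-term decomposition. The telescoping parts \(\tfrac12(\|U^n\|^2-\|U^{n-1}\|^2)+\tfrac12\delta^2(\|GU^n\|^2-\|GU^{n-1}\|^2)\) are exactly \(e_{\tau}^n-e_{\tau}^{n-1}\), by \eqref{eq:theta-energy} and \eqref{eq:test-operator-norm}. The remaining terms are \(\tau\delta\|A_{\tau}U^n\|^2+\tfrac{2\theta-1}{2}\tau^2\|D_{\tau}^{-}U^n\|^2+\tfrac{2\theta-1}{2}\tau^2\delta^2\|GD_{\tau}^{-}U^n\|^2\), which equal \(\tau d_{\tau}^n(U)\) by \eqref{eq:theta-dissipation-solution}--\eqref{eq:theta-dissipation-graph}.

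There is no real analytical obstacle; the main care point is the bookkeeping in the fundamental identity. One must check that both cross terms vanish by skew-symmetry and that the powers of \(\tau\) and \(\delta\) match the dissipation definition. Note that the choice \(\delta=\theta\tau\) is not actually needed for the identity itself. It enters only through the definitions of the energy and dissipation.
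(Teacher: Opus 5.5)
Your proposal is correct and follows the paper's proof: test with \(S_{\delta}U^n\), apply the fundamental identity \eqref{eq:stability_meta}--\eqref{eq:abstract-tested-expansion-graph}, and substitute the two \(\theta\)-increment identities. You also rederive the fundamental identity and observe that the choice \(\delta=\theta\tau\) is not used in the identity itself. Both points are accurate additions to the paper's more compressed argument.
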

\begin{proof}
Use \(S_{\delta}U^n\) as the test function in \eqref{eq:supg-weak}.
Equation \eqref{eq:stability_meta}--\eqref{eq:abstract-tested-expansion-graph}
gives
\begin{align}
(\tau A_{\tau}U^n,P_{\delta}U^n)_{\Omega}
&=\tau(D_{\tau}^{-}U^n,\mathcal M_{\theta}U^n)_{\Omega}
+\tau\delta\|A_{\tau}U^n\|_{\Omega}^2 \label{eq:theta-balance-expansion-main}\\
&\quad+\tau\delta^2(G\mathcal M_{\theta}U^n,
GD_{\tau}^{-}U^n)_{\Omega} \label{eq:theta-balance-expansion-graph}
\end{align}
Substitution of
\eqref{eq:theta-increment-energy}--\eqref{eq:theta-graph-increment-dissipation}
and the definitions \eqref{eq:theta-energy}--\eqref{eq:theta-dissipation-graph}
proves \eqref{eq:theta-energy-balance}.
\end{proof}

\begin{cor}[Theta-method stability]
\label{cor:theta-stability}
For \(1/2\le\theta\le1\), the energy hypothesis
\eqref{eq:energy-coercivity}--\eqref{eq:abstract-energy-hypothesis-defect}
holds with the forcing-defect term omitted.
Consequently,
\begin{align}
\|U_h\|_{\mathcal E,N}^2
&\lesssim\|T_{\delta}U_h^0\|_{\Omega}^2
+\|\mathcal M_{\theta}F\|_{\mathcal E^*,N}^2 \label{eq:theta-stability-estimate}
\end{align}
\end{cor}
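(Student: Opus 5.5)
The plan is to verify the energy hypothesis directly from Proposition~\ref{prop:theta-energy} and then invoke Theorem~\ref{thm:meta-stability}. For the \(\theta\)-method, \(\ell=1\) and \(n_0=1\), so the start-up energy reduces to \(\mathcal I_{\tau}(V)=\|T_{\delta}V^0\|_{\Omega}^2\). Coercivity \eqref{eq:energy-coercivity}--\eqref{eq:dissipation-coercivity} with \(c_E=1/2\) was already noted. The energy \(e_{\tau}^m\) equals \(\tfrac12\|T_{\delta}V^m\|_{\Omega}^2\) exactly. The dissipation \(d_{\tau}^n\) contains \(\delta\|A_{\tau}V^n\|_{\Omega}^2\) plus two terms carrying the factor \(2\theta-1\ge0\).

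Next I derive the forced identity. Take any solution \(V\) of \eqref{eq:abstract-forced-equation} with an arbitrary right-hand side \(R^n\); the data \(\mathcal M_{\theta}F^n\) play no special role. The test function \(S_{\delta}V^n\) lies in \(\mathcal V_h\). Testing with it and repeating the proof of Proposition~\ref{prop:theta-energy} word for word, with \(\mathcal M_{\theta}F^n\) replaced by \(R^n\), gives
\begin{equation*}
e_{\tau}^n(V)-e_{\tau}^{n-1}(V)+\tau d_{\tau}^n(V)=\tau(R^n,P_{\delta}V^n)_{\Omega}.
\end{equation*}
This step uses only \eqref{eq:stability_meta}--\eqref{eq:abstract-tested-expansion-graph} and the increment identities \eqref{eq:theta-increment-energy}--\eqref{eq:theta-graph-increment-dissipation}. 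None of these depends on the right-hand side.

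Summing over \(n=1,\dots,m\) telescopes the energy differences to
\begin{equation*}
e_{\tau}^m(V)+\tau\sum_{n=1}^{m}d_{\tau}^n(V)=e_{\tau}^0(V)+\tau\sum_{n=1}^m(R^n,P_{\delta}V^n)_{\Omega}.
\end{equation*}
Since \(e_{\tau}^0(V)=\tfrac12\mathcal I_{\tau}(V)\), this is \eqref{eq:abstract-energy-hypothesis-main} with \(C_E=1\). No defect term \eqref{eq:abstract-energy-hypothesis-defect} appears, and the constant is uniform in \(m\), \(h\), \(\tau\).

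The only point needing care is the stability estimate without the strong-residual term. The proof of Theorem~\ref{thm:meta-stability} only propagates the terms on the right of the energy hypothesis. With the defect term omitted, its argument yields \eqref{eq:abstract-stability-estimate} with the \(\tau\|R\|^2_{\ell^2_\tau}\) term dropped. Concretely: apply the identity at the horizon \(m_*\) maximising the energy and at \(m=N\), and bound the forcing by \eqref{eq:dual-forcing-bound}. Then absorb \(\|V\|_{\mathcal E,N}\) by Young's inequality.

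Finally, take \(V=U_h\) and \(R=\mathcal M_{\theta}F\); by \eqref{eq:supg-weak} this pair satisfies \eqref{eq:abstract-forced-equation}. Together with \(\mathcal I_{\tau}(U_h)=\|T_{\delta}U_h^0\|_{\Omega}^2\), this gives \eqref{eq:theta-stability-estimate}.
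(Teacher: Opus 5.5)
Your proposal is correct and follows the paper's proof. Like the paper, you sum the energy--dissipation identity of Proposition~\ref{prop:theta-energy} from \(n=1\) to \(m\), identify \(e_{\tau}^0=\frac12\mathcal I_{\tau}\) with \(n_0=1\), and apply Theorem~\ref{thm:meta-stability}. Your two additional points make explicit what the paper's one-line proof leaves implicit: the identity holds for an arbitrary forcing \(R^n\), as the hypothesis requires, and rerunning the theorem's argument without the defect term removes the \(\tau\|R\|_{\ell_{\tau}^2}^2\) contribution.
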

\begin{proof}
Sum \eqref{eq:theta-energy-balance} from \(n=1\) to an arbitrary horizon
\(m\). This is the abstract energy hypothesis with \(n_0=1\),
\(R^n=\mathcal M_{\theta}F^n\), and no strong forcing defect. Apply Theorem
\ref{thm:meta-stability} with
\(e_{\tau}^n\) and \(d_{\tau}^n\) defined in
\eqref{eq:theta-energy}--\eqref{eq:theta-dissipation-graph}.
\end{proof}


\subsection{Consistency and Error Estimate}
\label{subsec:theta-consistency-error}
To apply the abstract error theorem, it remains to determine the temporal order
and verify the differentiated consistency bound. For the sampled exact
solution, the differential equation gives
\begin{equation}
\rho^n=D_{\tau}^{-}U(t^n)
-\mathcal M_{\theta}\partial_tU(t^n) \label{eq:theta-truncation-residual}
\end{equation}
Taylor's formula with an integral remainder yields
\begin{align}
\|\rho^n\|_{\Omega}
&\lesssim |2\theta-1|\tau^{1/2}
\|\partial_t^2U\|_{L^2(t^{n-1},t^n;\mathcal H)} \label{eq:theta-first-order-consistency}\\
&\quad+\tau^{3/2}
\|\partial_t^3U\|_{L^2(t^{n-1},t^n;\mathcal H)} \label{eq:theta-second-order-consistency}
\end{align}
Thus the method has order \(\nu=1\) for fixed \(\theta>1/2\), and order
\(\nu=2\) for \(\theta=1/2\) or, more generally,
\(\theta=1/2+O(\tau)\). The dual-residual argument also requires a uniform
bound for \(\rho\) and an \(\ell_{\tau}^2\)-bound for its forward difference.
Applying the same integral-remainder formula to these quantities gives
\begin{align}
\max_{1\le n\le N}\|\rho^n\|_{\Omega}
+\|D_{\tau}^{+}\rho\|_{\ell_{\tau}^2(1,N-1;\mathcal H)}
&\lesssim |2\theta-1|\tau\Bigl(
\|\partial_t^2U\|_{L^{\infty}(0,T;\mathcal H)}
+\|\partial_t^3U\|_{L^2(0,T;\mathcal H)}\Bigr)
\label{eq:theta-differentiated-consistency-first}\\
&\quad+\tau^2\Bigl(
\|\partial_t^3U\|_{L^{\infty}(0,T;\mathcal H)}
+\|\partial_t^4U\|_{L^2(0,T;\mathcal H)}\Bigr)
\label{eq:theta-differentiated-consistency-second}
\end{align}
Hence \eqref{eq:maximum-truncation-bound}--
\eqref{eq:differentiated-truncation-bound} hold with \(\nu=1\) for fixed
\(\theta>1/2\), and with \(\nu=2\) for \(\theta=1/2\) or
\(\theta=1/2+O(\tau)\). The displayed norms are included in
\(\mathcal C^{\sharp}_{T,\nu+2}(U,F)\).

\begin{cor}[Theta-method error estimate]
\label{cor:theta-error}
Assume the regularity hypotheses of Proposition
\ref{prop:dual-residual-bound} and projected initial data. Then
\eqref{eq:abstract-error-estimate-space}--\eqref{eq:abstract-error-estimate-time}
holds with \(\nu=1\) for fixed \(\theta>1/2\), and with \(\nu=2\) for
\(\theta=1/2\) or \(\theta=1/2+O(\tau)\).
\end{cor}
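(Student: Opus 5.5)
The plan is to obtain Corollary~\ref{cor:theta-error} directly from Theorem~\ref{thm:meta-half}, by checking its hypotheses one at a time for the \(\theta\)-method. I would apply the theorem with \(n_0=1\), \(\ell=1\), \(b_0=\theta\), \(b_1=1-\theta\), and \(\delta=\theta\tau\).

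The first hypothesis of Theorem~\ref{thm:meta-half} is the abstract stability theorem. For the \(\theta\)-method this is supplied by Corollary~\ref{cor:theta-stability}. Summing the identity of Proposition~\ref{prop:theta-energy} from \(n=1\) to \(m\) gives \eqref{eq:abstract-energy-hypothesis-main} with \(C_E=1\) and without the defect term. The coercivity bounds \eqref{eq:energy-coercivity}--\eqref{eq:dissipation-coercivity} hold with \(c_E=1/2\), because \(2\theta-1\ge0\).

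The key point is that Proposition~\ref{prop:theta-energy} is an \emph{identity} for any solution of \eqref{eq:supg-weak}. It therefore holds for any solution of the forced equation \eqref{eq:abstract-forced-equation} with an arbitrary right-hand side \(R^n\), since the derivation only tests with \(S_\delta V^n\). In particular it applies to the error equation \eqref{eq:error-equation} with \(R^n=A_\tau\eta^n-\rho^n\), so Theorem~\ref{thm:meta-stability} bounds \(\phi\).

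The remaining hypotheses concern Proposition~\ref{prop:dual-residual-bound}. The assumption \(0<\delta\le1\) holds once \(\tau\le1\). The approximation hypotheses are assumed in the statement. For the differentiated consistency bounds \eqref{eq:maximum-truncation-bound}--\eqref{eq:differentiated-truncation-bound}, I would use \eqref{eq:theta-differentiated-consistency-first}--\eqref{eq:theta-differentiated-consistency-second}. These bounds come from writing
\[
\rho^n=\tau^{-1}\int_{t^{n-1}}^{t^n}\partial_tU\,dt-\theta\partial_tU(t^n)-(1-\theta)\partial_tU(t^{n-1}),
\]
expanding about \(t^{n-1}\) with integral remainders, and using the following observations:
\begin{enumerate}
\item the first-order coefficient is \((1/2-\theta)\tau\,\partial_t^2U\), which yields the \(|2\theta-1|\) term;
\item a second-order remainder is always present;
\item applying the same representation to \(\rho^{n+1}-\rho^n\) and dividing by \(\tau\) gives one extra time derivative, and Cauchy--Schwarz in time gives the \(\ell^2_\tau\) bound.
\end{enumerate}

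Next I would fix the order \(\nu\). For fixed \(\theta>1/2\) the bound is \(O(\tau)\), so \(\nu=1\). For \(\theta=1/2+O(\tau)\) we have \(|2\theta-1|\tau\lesssim\tau^2\), so \(\nu=2\). The extra constant is absorbed into \(\mathcal C^\sharp_{T,\nu+2}\). The strong bound \eqref{eq:global-truncation-bound} follows in the same way from \eqref{eq:theta-first-order-consistency}--\eqref{eq:theta-second-order-consistency}.

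Finally, since the initial data are projected, \(\phi^0=0\) and \(\mathcal I_\tau(\phi)=0\). Theorem~\ref{thm:meta-half} then gives \eqref{eq:abstract-error-estimate-space}--\eqref{eq:abstract-error-estimate-time}.

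The main obstacle is the differentiated consistency bound in the case \(\theta=1/2+O(\tau)\). There one must check that the \(O(\tau)\) deviation enters only with the factor \(|2\theta-1|\) and is uniform in \(n\). It must not pick up an additional \(\tau^{-1}\) when the forward difference is taken. This holds because \(\theta\) is fixed across time steps, so the deviation term is \(|2\theta-1|\) times a difference quotient of a smooth function.
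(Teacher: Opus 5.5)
Your proposal is correct and follows the paper's own proof. That proof simply combines Corollary~\ref{cor:theta-stability}, the \(\theta\)-consistency bounds \eqref{eq:theta-first-order-consistency}--\eqref{eq:theta-differentiated-consistency-second}, and Theorem~\ref{thm:meta-half}. You additionally make explicit three points the paper leaves implicit: the energy identity holds for arbitrary forcing \(R^n\), so it covers the error equation; \(\delta=\theta\tau\le1\) once \(\tau\le1\); and the integral-remainder argument produces the factor \(|2\theta-1|\) without an extra \(\tau^{-1}\) under the forward difference.
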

\begin{proof}
Combine Corollary \ref{cor:theta-stability},
\eqref{eq:theta-first-order-consistency}--\eqref{eq:theta-second-order-consistency},
\eqref{eq:theta-differentiated-consistency-first}--
\eqref{eq:theta-differentiated-consistency-second},
and Theorem \ref{thm:meta-half}.
\end{proof}

Under the balanced scaling \(\tau\sim h\), the normal-equation choice satisfies
\(\delta\sim h\). With \(r=k+1\), the parameter balance in Section
\ref{subsec:parameter-balance} therefore yields the graph-norm order
\(O(h^{k+1/2}+\tau^\nu)\), with \(\nu\) as stated in Corollary
\ref{cor:theta-error}.


\section{Adams--Moulton Methods of Orders Three to Five}
\label{sec:adams-moulton}
This section treats the implicit Adams--Moulton methods AM3--AM5. We first
establish consistency and the increment bounds needed to control the
multistep history. Exact energy decompositions then reveal the different
stability mechanisms: AM3 and AM4 require strengthened sufficient CFL
conditions for general polynomial degree, whereas AM5 is stable under the
standard hyperbolic condition. For continuous piecewise affine finite
elements, an elementwise cancellation recovers the standard hyperbolic regime
also for AM3 and AM4. The Adams--Bashforth methods are defined and analysed
independently in Section \ref{sec:ab}.

\paragraph{Method Coefficients.}
For all three methods,
\(D_{\tau}^{-}U^n=\tau^{-1}(U^n-U^{n-1})\), while the coefficients in
\eqref{eq:method-average} are
\begin{align}
\mathrm{AM3}:\quad(b_0,b_1,b_2)
&=\left(\frac5{12},\frac23,-\frac1{12}\right) \label{eq:am3-coefficients}\\
\mathrm{AM4}:\quad(b_0,b_1,b_2,b_3)
&=\left(\frac38,\frac{19}{24},-\frac5{24},\frac1{24}\right) \label{eq:am4-coefficients}\\
\mathrm{AM5}:\quad(b_0,b_1,b_2,b_3,b_4)
&=\left(\frac{251}{720},\frac{646}{720},-\frac{264}{720},
\frac{106}{720},-\frac{19}{720}\right) \label{eq:am5-coefficients}
\end{align}
Set \(n_0(3)=2\), \(n_0(4)=3\), and \(n_0(5)=4\). These are the first
computed indices for AM3, AM4, and AM5, respectively.
Define the backward increments recursively by
\begin{align}
d^1U^n&=U^n-U^{n-1} \label{eq:first-backward-increment}\\
d^jU^n&=d^{j-1}U^n-d^{j-1}U^{n-1}
\quad 2\le j\le4 \label{eq:higher-backward-increments}
\end{align}
With \(\widetilde U^n=(U^n+U^{n-1})/2\), exact expansion of the coefficients
in \eqref{eq:am3-coefficients}--\eqref{eq:am5-coefficients} gives
\begin{align}
\mathcal M_{\mathrm{AM3}}U^n
&=\widetilde U^n-\frac1{12}d^2U^n \label{eq:am3-increment-average}\\
\mathcal M_{\mathrm{AM4}}U^n
&=\widetilde U^n-\frac1{12}d^2U^n-\frac1{24}d^3U^n \label{eq:am4-increment-average}\\
\mathcal M_{\mathrm{AM5}}U^n
&=\widetilde U^n-\frac1{12}d^2U^n-\frac1{24}d^3U^n
-\frac{19}{720}d^4U^n \label{eq:am5-increment-average}
\end{align}
These centred forms isolate the higher backward increments that generate the
bulk terms in the energy decompositions below.


\subsection{Consistency of the Adams--Moulton Methods}
\label{sec:trunc-adams}
Set \(\sigma(t)=F(t)-GU(t)=\partial_tU(t)\). The Adams--Moulton formula is the
interpolatory quadrature rule obtained by approximating this exact time
derivative over one time step. Indeed, the exact equation implies
\begin{equation}
U(t^n)-U(t^{n-1})=\int_{t^{n-1}}^{t^n}\sigma(t)\,dt \label{eq:adams-exact-integral}
\end{equation}
For AM\(\nu\), let \(\sigma_{\nu-1}\) be the polynomial of degree at most
\(\nu-1\) interpolating \(\sigma\) at the nodes
\begin{equation}
t^{n-j}\quad 0\le j\le \nu-1 \label{eq:adams-interpolation-nodes}
\end{equation}
Integrating the interpolant over the current step gives, by construction,
\begin{equation}
\int_{t^{n-1}}^{t^n}\sigma_{\nu-1}(t)\,dt
=\tau\mathcal M_{\mathrm{AM}\nu}\sigma(t^n) \label{eq:adams-interpolatory-quadrature}
\end{equation}
Subtracting \eqref{eq:adams-interpolatory-quadrature} from
\eqref{eq:adams-exact-integral} gives
\begin{align}
\tau\rho^n
&=\int_{t^{n-1}}^{t^n}\bigl(\sigma(t)-\sigma_{\nu-1}(t)\bigr)\,dt \label{eq:adams-truncation-integral}\\
\rho^n
&=D_{\tau}^{-}U(t^n)+G\mathcal M_{\mathrm{AM}\nu}U(t^n)
-\mathcal M_{\mathrm{AM}\nu}F(t^n) \label{eq:adams-truncation-residual}
\end{align}
The interpolation remainder and the Cauchy--Schwarz inequality yield
\begin{equation}
\|\rho^n\|_{\Omega}
\le C_\nu\tau^{\nu-1/2}
\|\partial_t^\nu(F-GU)\|_{L^2(t^{n-\nu+1},t^n;\mathcal H)} \label{eq:adams-truncation-bound}
\end{equation}
Thus \eqref{eq:high-consist} holds with the claimed order
\(\nu\in\{3,4,5\}\). More precisely, in
\eqref{eq:global-truncation-regularity} one may take
\begin{equation}
\mathcal C_{n,\nu+1}(U,F)
=C_\nu\|\partial_t^\nu(F-GU)\|_{L^2(t^{n-\nu+1},t^n;\mathcal H)}
\label{eq:am-local-truncation-regularity}
\end{equation}
These intervals have overlap at most \(\nu\), so
\eqref{eq:global-truncation-bound} follows with a constant depending only on
the fixed order. The dual-residual estimate also requires uniform control of
the residual and square-summable control of its first forward difference.
Writing the interpolation remainder in integral form gives both bounds:
\begin{align}
\max_{n_0(\nu)\le n\le N}\|\rho^n\|_{\Omega}
+\|D_{\tau}^{+}\rho\|_{
\ell_{\tau}^2(n_0(\nu),N-1;\mathcal H)}
&\le C_\nu\tau^\nu\Bigl(
\|\partial_t^\nu(F-GU)\|_{L^{\infty}(0,T;\mathcal H)}
\label{eq:am-differentiated-consistency-main}\\
&\quad+\|\partial_t^{\nu+1}(F-GU)\|_{L^2(0,T;\mathcal H)}\Bigr)
\label{eq:am-differentiated-consistency-derivative}
\end{align}
Thus \eqref{eq:maximum-truncation-bound}--
\eqref{eq:differentiated-truncation-bound} hold for AM3--AM5 with these
two norms included in \(\mathcal C^{\sharp}_{T,\nu+2}(U,F)\).


\subsection{Stability Assumptions and Increment Bounds}
\label{subsec:adams-stability-assumptions}
Assume the inverse inequality
\begin{equation}
\|GV_h\|_{\Omega}\le C_Gh^{-1}\|V_h\|_{\Omega}
\quad \forall V_h\in\mathcal V_h \label{eq:adams-inverse-inequality}
\end{equation}
and the parameter relations
\begin{equation}
0<\delta\le C_{\delta}h,\qquad
\tau\le c_0C_G^{-1}h \label{eq:cflcond}
\end{equation}
where \(c_0>0\) will be chosen sufficiently small. This is the standard
hyperbolic CFL restriction used for all three methods. For the general-degree
AM3 and AM4 estimates, the present proof also uses the respective additional
conditions
\begin{align}
\tau&\le C_{4/3}h^{4/3} &&\text{for AM3} \label{eq:am3-strengthened-cfl}\\
\tau&\le C_{6/5}h^{6/5} &&\text{for AM4} \label{eq:am4-strengthened-cfl}
\end{align}
No smallness is imposed on \(C_{4/3}\) or \(C_{6/5}\); these constants enter
the final Gronwall constant. AM5 requires no strengthened condition.

\paragraph{Relation to the Normal Equation.}
Only the upper bound on \(\delta\) is used below. Together with the inverse
inequality, it makes the graph norm uniformly equivalent to the
\(L^2\)-norm on \(\mathcal V_h\); no lower bound proportional to \(h\) is
needed. The stability proof therefore permits \(\delta/h\to0\). In
particular, the normal-equation value \(\delta=b_0\tau\) satisfies
\eqref{eq:cflcond} under the relevant time-step condition and is covered for
AM3--AM5. Choices \(\delta\ne b_0\tau\) remain generally nonsymmetric SUPG
schemes.

The energy decompositions contain negative bulk terms involving the highest
backward increments for AM3 and AM4, and a negative fourth-increment term for
AM5. The following lemma relates these increments to earlier solution values
and to the forcing.

\begin{lem}[Increment recurrence]
\label{lem:adams-increment-recurrence}
Let \(\nu\in\{3,4,5\}\), and let \(V_h\) solve the generally forced AM\(\nu\)
equation
\begin{equation}
(A_{\tau}V_h^n,T_{\delta}W)_{\Omega}
=(R^n,T_{\delta}W)_{\Omega}
\quad\forall W\in\mathcal V_h,\qquad n\ge n_0(\nu)
\label{eq:am-general-forced-equation}
\end{equation}
Define
the remaining-stencil coefficients by
\begin{align}
\gamma_0&=b_0+b_1 \label{eq:adams-recurrence-first-coefficient}\\
\gamma_q&=b_{q+1}\quad 1\le q\le \nu-2 \label{eq:adams-recurrence-remaining-coefficients}
\end{align}
For every admissible increment order \(j\ge1\) and every
\(n\ge n_0(\nu)+j-1\),
\begin{align}
\|d^jV_h^n\|_{\delta}
&\lesssim\frac{\tau}{h}
\sum_{q=0}^{\nu-2}\|d^{j-1}V_h^{n-1-q}\|_{\delta} \label{eq:adams-increment-recurrence-solution}\\
&\quad+\tau\sum_{q=0}^{j-1}
\|R^{n-q}\|_{\Omega} \label{eq:adams-increment-recurrence-forcing}
\end{align}
where \(d^0V_h^n=V_h^n\). In particular,
\begin{equation}
\|d^1V_h^n\|_{\delta}
\lesssim\frac{\tau}{h}\sum_{q=0}^{\nu-2}\|V_h^{n-1-q}\|_{\delta}
+\tau\|R^n\|_{\Omega} \label{eq:d1ubound}
\end{equation}
The local recurrence above is the basic estimate. Its iteration gives the
bulk and endpoint bounds stated next.
Early increments for which the formal difference would precede the first
computed equation are estimated directly from the computed updates. Only
their solution terms are absorbed into \(\mathcal I_{\tau}(V_h)\); every
forcing contribution is retained from level \(n_0(\nu)\) onward. Consequently,
for \(\nu=3,4\),
\begin{align}
\sum_{n=\nu-1}^{m}\|d^{\nu-1}V_h^n\|_{\delta}^2
&\lesssim\tau C_{2(\nu-1)/(2\nu-3)}^{\,2\nu-3}
\sum_{n=0}^{m-1}\|V_h^n\|_{\delta}^2 \label{eq:adams-high-increment-bulk-solution}\\
&\quad+\tau^2\sum_{n=n_0(\nu)}^{m}\|R^n\|_{\Omega}^2
+C\mathcal I_{\tau}(V_h) \label{eq:adams-high-increment-bulk-forcing}
\end{align}
and, for AM5,
\begin{align}
\sum_{n=4}^{m}\|d^4V_h^n\|_{\delta}^2
&\lesssim c_0^2\sum_{n=4}^{m-1}\|d^3V_h^n\|_{\delta}^2 \label{eq:am5-fourth-increment-control-solution}\\
&\quad+\tau^2\sum_{n=n_0(5)}^{m}\|R^n\|_{\Omega}^2
+C\mathcal I_{\tau}(V_h) \label{eq:am5-fourth-increment-control-forcing}
\end{align}
Moreover, every horizon \(m\ge n_0(\nu)\) satisfies the endpoint bounds
\begin{align}
\sum_{j=1}^{\nu-2}\|d^jV_h^m\|_{\delta}^2
&\lesssim c_0^2\max_{0\le n\le m}\|V_h^n\|_{\delta}^2
+\tau^2\sum_{n=n_0(\nu)}^m\|R^n\|_{\Omega}^2 \label{eq:adams-endpoint-increment-solution}\\
&\quad+C\mathcal I_{\tau}(V_h) \label{eq:adams-endpoint-increment-startup}
\end{align}
and, for AM5,
\begin{align}
\|d^2V_h^{m-1}\|_{\delta}^2
&\lesssim c_0^2\max_{0\le n\le m}\|V_h^n\|_{\delta}^2
+\tau^2\sum_{n=4}^m\|R^n\|_{\Omega}^2
+C\mathcal I_{\tau}(V_h) \label{eq:am5-previous-endpoint-increment}
\end{align}
\end{lem}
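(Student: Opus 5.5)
The plan rests on one key idea: the normal-equation operator \(T_\delta\) acts as an isometry onto the graph norm, so a single step of the forced AM\(\nu\) equation can be inverted in \(\|\cdot\|_\delta\) at unit cost. I start by rewriting \eqref{eq:am-general-forced-equation} using \(\tau D_\tau^- V^n = d^1V^n\) and the identity \(\sum_j b_j = 1\). With the remaining-stencil coefficients \(\gamma_q\), this gives
\[
\mathcal M_\tau V^n = V^n - \sum_{q=0}^{\nu-2}\gamma_q\, d^1V^{n-q}\quad\text{up to reindexing,}
\]
or, more directly,
\[
d^1V^n + \tau b_0 G\, d^1V^n = -\tau G\sum_{q=0}^{\nu-2}\gamma_q V^{n-1-q} + \tau R^n \quad\text{(tested with } T_\delta W\text{)}.
\]
The coefficients are chosen so that \(\sum_j b_j V^{n-j} = b_0 V^n + \sum_{q}\gamma_q V^{n-1-q} - b_0\,d^1V^n\)-type identities hold. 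I will verify this exact algebraic split first.

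Next I test with \(W = d^1V^n\) and use \eqref{eq:test-operator-inner-product}. The left side becomes \(\|d^1V^n\|_\delta^2\) plus a cross term \(\tau b_0 (G d^1V^n, T_\delta d^1V^n)\). That cross term equals \(\tau b_0\delta\|G d^1V^n\|^2 \ge 0\) by skew-symmetry, so it can be dropped. On the right, Cauchy--Schwarz together with \(\|T_\delta W\|=\|W\|_\delta\) gives
\[
\|d^1V^n\|_\delta \le \tau\Bigl\|G\sum_q\gamma_q V^{n-1-q}\Bigr\|_{\delta\text{-dual}} + \tau\|R^n\|.
\]
I then bound \(\|GV\|\) and \(\delta\|G^2V\|\). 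The second is delicate, because \(G^2V\) is not in \(\mathcal V_h\). So instead I treat the pairing \((\tau G X, T_\delta W) = -\tau(X, GW) + \tau\delta(GX,GW)\) and apply the inverse inequality \eqref{eq:adams-inverse-inequality} to \(GW\) and \(GX\), together with \(\delta\lesssim h\). This yields the factor \(\tau/h\) times \(\|X\|_\delta\), which proves \eqref{eq:d1ubound}. For \(j\ge2\) I take the \((j-1)\)-fold backward difference of the equation; linearity and time-independence of \(G\) let the same argument go through. The forcing \(d^{j-1}R^n\) is bounded crudely by the sum of \(j\) consecutive \(\|R^{n-q}\|\), giving \eqref{eq:adams-increment-recurrence-solution}--\eqref{eq:adams-increment-recurrence-forcing}.

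For the bulk bounds I iterate the recurrence \(\nu-1\) times. This gives \(\|d^{\nu-1}V^n\|_\delta \lesssim (\tau/h)^{\nu-1}\sum \|V^{n-\cdot}\|_\delta + \tau\sum\|R\|\), where the forcing factors \((\tau/h)^i\le c_0^i\) stay bounded. Squaring and summing with bounded overlap produces \((\tau/h)^{2(\nu-1)}\sum\|V^n\|_\delta^2\). The strengthened CFL condition \eqref{eq:am3-strengthened-cfl}/\eqref{eq:am4-strengthened-cfl} then converts this into \(\tau\, C^{2\nu-3}\): indeed \(\tau\le Ch^{2(\nu-1)/(2\nu-3)}\) gives \(\tau^{2\nu-3}\le C^{2\nu-3}h^{2(\nu-1)}\), hence \((\tau/h)^{2(\nu-1)}\le \tau\,C^{2\nu-3}\). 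The AM5 bound needs only one step: apply the recurrence with \(j=4\) and use \(\tau/h\le c_0\). The endpoint bounds follow by iterating \(j\) times, bounding every history value by the maximum and the \((\tau/h)\) factors by \(c_0\), and applying Cauchy--Schwarz \(\tau\sum_{q}\|R\|\le \tau \sqrt{j}(\sum\|R\|^2)^{1/2}\) to the forcing.

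I expect the main obstacle to be the bookkeeping at early indices. There the iterated differences reach back before \(n_0(\nu)\), so the equation cannot be differenced. For those terms I plan to bound the increments crudely by the triangle inequality using values at levels \(<n_0\), which are absorbed into \(\mathcal I_\tau\), together with single-step \eqref{eq:d1ubound} estimates at computed levels. I must make sure the forcing is never evaluated below \(n_0\). Careful indexing is required, but no new idea.
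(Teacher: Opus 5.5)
Your route is the paper's. You difference the equation \(j-1\) times and use the split \(\mathcal M_\tau X^n=b_0\,d^1X^n+\sum_{q=0}^{\nu-2}\gamma_qX^{n-1-q}\) to collect the current term into \(T_{b_0\tau}d^jV_h^n\). That split is your ``more direct'' display; the other two identities you wrote are not correct with these \(\gamma_q\). You then test with \(W=d^jV_h^n\), use skew-symmetry and the inverse inequality, iterate, and convert \((\tau/h)^{2(\nu-1)}\le C^{2\nu-3}\tau\). For AM5 you use one step, and you push early start-up terms into \(\mathcal I_\tau\). The iteration, power relations, AM5 step, endpoint bounds, and early-index bookkeeping are all fine.

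The step that fails as written is the coercivity. Testing gives
\[
(T_{b_0\tau}X,T_\delta X)_\Omega=\|X\|_\Omega^2+b_0\tau\delta\|GX\|_\Omega^2,\qquad X=d^jV_h^n,
\]
because skew-symmetry gives \((X,T_\delta X)_\Omega=\|X\|_\Omega^2\), not \(\|X\|_\delta^2\). Your left side \(\|X\|_\delta^2+b_0\tau\delta\|GX\|_\Omega^2\) therefore contains a spurious extra \(\delta^2\|GX\|_\Omega^2\). The true left side equals \(\|X\|_\delta^2\) only in the normal-equation case \(\delta=b_0\tau\), which your ``isometry at unit cost'' idea silently assumes. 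The lemma, however, is stated under \(0<\delta\le C_\delta h\) and \(\tau\le c_0C_G^{-1}h\), with no relation between \(\delta\) and \(b_0\tau\) (e.g.\ \(\delta=h\) with \(\tau\ll h\)). In that regime \(b_0\tau\delta\|GX\|^2\) cannot control \(\delta^2\|GX\|^2\).

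The missing ingredient is the discrete norm equivalence on \(\mathcal V_h\). By the inverse inequality and \(\delta\le C_\delta h\), \(\|X\|_\delta^2\le(1+C_\delta^2C_G^2)\|X\|_\Omega^2\), so the left side dominates \(\|X\|_\delta^2\) up to a constant. This is exactly the step the paper uses; with it your argument goes through, with \(\lesssim\) in place of \(\le\).

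On the right-hand side the dual-norm detour is unnecessary. Since \(\sum_q\gamma_q d^{j-1}V_h^{n-1-q}\in\mathcal V_h\), Cauchy--Schwarz with \(\|T_\delta W\|_\Omega=\|W\|_\delta\) and \(\|GX\|_\Omega\le C_Gh^{-1}\|X\|_\Omega\) gives the factor \(\tau/h\) directly, and no \(G^2\) term arises.
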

\begin{proof}
We first derive the local recurrence, then sum its iterates to obtain the
bulk estimates, and finally apply it at a fixed horizon to control the
boundary increments.
Apply \(d^{j-1}\) to \eqref{eq:am-general-forced-equation} and multiply by
\(\tau\). Since
\(d^{j-1}V_h^n=d^{j-1}V_h^{n-1}+d^jV_h^n\), the current \(b_0\) term can be
collected exactly in the left-hand operator. The resulting equation is
\begin{align}
(T_{b_0\tau}d^jV_h^n,T_{\delta}W)_{\Omega}
&=-\tau\left(G\sum_{q=0}^{\nu-2}\gamma_q
d^{j-1}V_h^{n-1-q},T_{\delta}W\right)_{\Omega} \label{eq:adams-differenced-equation-history}\\
&\quad+\tau(d^{j-1}R^n,T_{\delta}W)_{\Omega}
\quad \forall W\in\mathcal V_h \label{eq:adams-differenced-equation-forcing}
\end{align}
This displays the previous-value stencil explicitly. With \(W=d^jV_h^n\),
skew-symmetry makes the current increment coercive:
\begin{equation}
(T_{b_0\tau}d^jV_h^n,T_{\delta}d^jV_h^n)_{\Omega}
=\|d^jV_h^n\|_{\Omega}^2
+b_0\tau\delta\|Gd^jV_h^n\|_{\Omega}^2 \label{eq:adams-current-increment-coercivity}
\end{equation}
The inverse inequality and the upper bound \(\delta\le C_{\delta}h\) give
the norm equivalence
\begin{equation}
\|V_h\|_{\Omega}\le\|V_h\|_{\delta}
\lesssim\|V_h\|_{\Omega}\quad \forall V_h\in\mathcal V_h \label{eq:adams-graph-l2-equivalence}
\end{equation}
Equations \eqref{eq:adams-differenced-equation-history}--\eqref{eq:adams-graph-l2-equivalence}
therefore prove the local estimates
\eqref{eq:adams-increment-recurrence-solution}--\eqref{eq:d1ubound}.

Iterating the previous-value recurrence \(\nu-1\) times gives the solution
factor \((\tau/h)^{\nu-1}\). Every resulting solution value has a time index
strictly smaller than that of the highest increment. After squaring and
summing through \(m\), this gives the sum only through \(m-1\) in
\eqref{eq:adams-high-increment-bulk-solution}. Finite convolution of the
forcing values retains precisely the computed levels \(n_0(\nu),\ldots,m\).
The exact power relations
\begin{align}
(\tau/h)^4&\le C_{4/3}^{\,3}\tau
&&\text{under \eqref{eq:am3-strengthened-cfl}} \label{eq:am3-cfl-power}\\
(\tau/h)^6&\le C_{6/5}^{\,5}\tau
&&\text{under \eqref{eq:am4-strengthened-cfl}} \label{eq:am4-cfl-power}
\end{align}
prove \eqref{eq:adams-high-increment-bulk-solution}--\eqref{eq:adams-high-increment-bulk-forcing}.
For AM5, applying the recurrence once with \(j=4\) gives only the previous
third differences \(d^3V_h^{n-1-q}\). Squaring, summing, and using
\(\tau/h\le c_0C_G^{-1}\) proves
\eqref{eq:am5-fourth-increment-control-solution}--\eqref{eq:am5-fourth-increment-control-forcing}.
Finally, a finite iteration of the same previous-value recurrence at a fixed
horizon proves \eqref{eq:adams-endpoint-increment-solution}--\eqref{eq:adams-endpoint-increment-startup}.
For AM5, applying the order-two endpoint argument at horizon \(m-1\) and
using the update at level \(m\) proves \eqref{eq:am5-previous-endpoint-increment}.
\end{proof}


For continuous piecewise affine functions and constant-coefficient \(G\),
\(GV_h\) is elementwise constant. The elementwise mean of \(d^1V_h^n\)
therefore drops out after applying \(G\), avoiding the recurrence that
produced the strengthened CFL powers above.

\begin{lem}[Piecewise affine increment bound]
\label{lem:adams-piecewise-affine-increment-bound}
Assume that \(\mathcal V_h\) consists of continuous piecewise affine
functions on a shape-regular quasi-uniform mesh and that \(G\) has constant
coefficients, so that
\begin{equation}
GV_h\vert_K\in[\mathbb P_0(K)]^{n_{\mathrm{sys}}}
\quad\forall V_h\in\mathcal V_h,\quad K\in\mathcal T_h
\label{eq:adams-piecewise-affine-mapping}
\end{equation}
Let \(V_h\) solve \eqref{eq:am-general-forced-equation} for
\(\nu\in\{3,4\}\), and assume
\begin{equation}
\tau\le C_{\tau\delta}\delta,\qquad
0<\delta\le C_{\delta}h,\qquad
\tau\le c_0C_G^{-1}h
\label{eq:adams-piecewise-affine-parameters}
\end{equation}
Then every admissible horizon satisfies
\begin{align}
\sum_{n=\nu-1}^{m}\|d^{\nu-1}V_h^n\|_{\delta}^2
&\lesssim C_{\tau\delta}\left(\frac{\tau}{h}\right)^2
\tau\delta\sum_{n=n_0(\nu)}^{m}\|A_{\tau}V_h^n\|_{\Omega}^2
\label{eq:adams-piecewise-affine-increment-solution}\\
&\quad+\tau^2\sum_{n=n_0(\nu)}^{m}\|R^n\|_{\Omega}^2
+C\mathcal I_{\tau}(V_h)
\label{eq:adams-piecewise-affine-increment-forcing}
\end{align}
The hidden constant is independent of \(h\), \(\tau\), and the horizon.
\end{lem}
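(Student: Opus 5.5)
The plan is to apply the differenced equation \eqref{eq:adams-differenced-equation-history}--\eqref{eq:adams-differenced-equation-forcing} from the proof of Lemma \ref{lem:adams-increment-recurrence} only once, with \(j=\nu-1\). The history term will not be estimated through the inverse inequality, which costs \(\tau/h\) per step and was the source of the strengthened CFL powers. Instead it will be expressed through the material residual \(A_{\tau}V_h\). To do this, write \(Z^n=\sum_{q=0}^{\nu-2}\gamma_q d^{\nu-2}V_h^{n-1-q}\) and test with \(W=d^{\nu-1}V_h^n\).

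\textbf{Step 1: one-step bound.} By \eqref{eq:adams-current-increment-coercivity} the left-hand side is at least \(\|d^{\nu-1}V_h^n\|_{\Omega}^2\). On the right, use the pairing \((GZ^n,T_{\delta}W)_{\Omega}\le\|GZ^n\|_{\Omega}\bigl(\|W\|_{\Omega}+\delta\|GW\|_{\Omega}\bigr)\). The norm equivalence \eqref{eq:adams-graph-l2-equivalence} then bounds the last factor by \(\|W\|_{\Omega}\). This gives
\[
\|d^{\nu-1}V_h^n\|_{\delta}\lesssim\tau\sum_{q=0}^{\nu-2}\|Gd^{\nu-2}V_h^{n-1-q}\|_{\Omega}
+\tau\sum_{q=0}^{\nu-2}\|R^{n-q}\|_{\Omega}.
\]
Since \(d^{\nu-2}\) is a fixed finite combination of first increments \(d^1V_h^k\) at neighbouring levels, it suffices to bound \(\|Gd^1V_h^k\|_{\Omega}\).

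\textbf{Step 2: the elementwise cancellation.} This is the key step. By definition,
\[
d^1V_h^k=\tau D_{\tau}^{-}V_h^k=\tau A_{\tau}V_h^k-\tau G\mathcal M_{\tau}V_h^k .
\]
By \eqref{eq:adams-piecewise-affine-mapping}, \(G\mathcal M_{\tau}V_h^k\) is constant on each element \(K\). Because \(G\) is first order with constant coefficients, it annihilates constants, so \(Gd^1V_h^k=G(d^1V_h^k-c_K)\) on \(K\) for any constant vector \(c_K\). Choosing \(c_K=-\tau G\mathcal M_{\tau}V_h^k|_K\) gives \(Gd^1V_h^k=\tau\,G(A_{\tau}V_h^k)\) on \(K\). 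Here \(A_{\tau}V_h^k\) is elementwise affine, so a local inverse estimate on \(K\) (using shape regularity and quasi-uniformity) yields
\[
\|Gd^1V_h^k\|_{\Omega}\lesssim\frac{\tau}{h}\|A_{\tau}V_h^k\|_{\Omega}.
\]
Note that the estimate is applied to \(A_{\tau}V_h^k\), which is not in \(\mathcal V_h\). Only the elementwise polynomial inverse inequality is used, not \eqref{eq:adams-inverse-inequality}. I expect this step to need the most care: the local inverse estimate must be justified on each element separately, and the cancellation only works because the \(D_{\tau}^{-}\) and \(G\mathcal M_{\tau}\) parts of \(A_{\tau}\) differ in polynomial degree.

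\textbf{Step 3: summation, scaling, and early levels.} Inserting Step 2 into Step 1 and squaring gives \(\|d^{\nu-1}V_h^n\|_{\delta}^2\lesssim\tau^2(\tau/h)^2\sum_k\|A_{\tau}V_h^k\|_{\Omega}^2\) plus forcing. The sum over \(k\) runs over a fixed number of levels near \(n\).

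Summing over \(n\le m\), the bounded overlap of these levels gives a single sum over \(k\le m\). Then \(\tau^2=(\tau/\delta)\,\tau\delta\le C_{\tau\delta}\tau\delta\), which yields \eqref{eq:adams-piecewise-affine-increment-solution}, while the forcing sum gives the \(\tau^2\sum\|R^n\|_{\Omega}^2\) term.

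Any level \(k<n_0(\nu)\) at which \(A_{\tau}V_h^k\) is not defined by a computed equation is handled as in Lemma \ref{lem:adams-increment-recurrence}. In that case the first increment \(d^1V_h^k\) involves only prescribed start-up values. It is bounded directly by \(\|Gd^1V_h^k\|_{\Omega}\lesssim h^{-1}\|d^1V_h^k\|_{\Omega}\), and with the prefactor \(\tau^2\), the condition \(\tau\lesssim h\) gives a bound by \(C\mathcal I_{\tau}(V_h)\). The same treatment applies to the finitely many early increments \(d^{\nu-1}V_h^n\) whose differenced equation would reach before \(n_0(\nu)\).
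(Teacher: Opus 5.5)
Your proof is correct, and its key step takes a more direct route than the paper's.

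\textbf{The paper's route.} It introduces the elementwise fluctuation $Z^n=(\mathrm{Id}-Q_0)d^1V_h^n$ and tests the undifferenced scheme with $W=\Pi_hZ^n$. Using $L^2$-orthogonality of $\Pi_h$ and the elementwise constancy of $G\mathcal M_{\mathrm{AM}\nu}V_h^n$, it obtains $\|Z^n\|_{\Omega}\lesssim\tau(\|A_{\tau}V_h^n\|_{\Omega}+\|R^n\|_{\Omega})$. It then differences the scheme once and tests with $d^2V_h^n$, keeping the current level inside $G\mathcal M_{\mathrm{AM}\nu}d^1V_h^n$. It uses $Gd^1V_h^n=GZ^n$ with a local inverse estimate, and obtains the AM4 third increment as a difference of second increments.

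\textbf{Your route.} You bypass $Q_0$, $\Pi_h$, and the discrete equation at this point. The identity $d^1V_h^k=\tau A_{\tau}V_h^k-\tau G\mathcal M_{\tau}V_h^k$ has an elementwise constant last term, which $G$ annihilates. Hence $Gd^1V_h^k=\tau G(A_{\tau}V_h^k|_K)$ on each $K$, and $\|Gd^1V_h^k\|_{\Omega}\lesssim(\tau/h)\|A_{\tau}V_h^k\|_{\Omega}$. This holds for any discrete sequence and carries no forcing term. The same observation gives $Z^n=\tau(\mathrm{Id}-Q_0)A_{\tau}V_h^n$, so the paper's fluctuation bound in fact holds without the $\|R^n\|_{\Omega}$ contribution. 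You then feed this into the recurrence of Lemma \ref{lem:adams-increment-recurrence} at order $j=\nu-1$, with the current increment absorbed in $T_{b_0\tau}$, so only previous levels enter.

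\textbf{Comparison.} Your use of the recurrence at order $j=\nu-1$ costs about the same as the paper's difference-once-then-triangle-inequality route. Both require the same few start-up increments to be treated by hand. The paper needs only the once-differenced scheme for both methods. Your version shows that the cancellation is a pointwise-in-time algebraic fact plus an elementwise inverse estimate, not a consequence of the scheme.

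\textbf{One precision.} Some early increments $d^{\nu-1}V_h^n$ contain the first computed values, for example $V_h^{n_0(\nu)}$. Start-up data alone do not bound these. You must bound those values through the update equation, using $\tau\lesssim h$, which produces the $\tau^2\|R^n\|_{\Omega}^2$ terms kept in the forcing sum, exactly as in the paper.
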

\begin{proof}
Let \(Q_0\) be the elementwise \(L^2\)-projection onto discontinuous
piecewise constants, let \(\Pi_h\) be the \(L^2\)-projection onto
\(\mathcal V_h\), and set
\begin{equation}
Z^n=(\mathrm{Id}-Q_0)d^1V_h^n
\label{eq:adams-piecewise-affine-fluctuation}
\end{equation}
Thus \(Z^n\) is precisely the elementwise fluctuation of the first increment.
Multiplication of \eqref{eq:am-general-forced-equation} by \(\tau\) gives
\begin{equation}
(d^1V_h^n+\tau G\mathcal M_{\mathrm{AM}\nu}V_h^n,
T_{\delta}W)_{\Omega}
=\tau(R^n,T_{\delta}W)_{\Omega}
\label{eq:adams-piecewise-affine-multiplied-equation}
\end{equation}
Choose \(W=\Pi_hZ^n\). Since \(d^1V_h^n\in\mathcal V_h\), projection
orthogonality gives
\((d^1V_h^n,\Pi_hZ^n)_{\Omega}=\|Z^n\|_{\Omega}^2\).
Moreover, \(G\mathcal M_{\mathrm{AM}\nu}V_h^n\) is elementwise constant and
is orthogonal to \(Z^n\), while
\(D_{\tau}^{-}V_h^n\) is orthogonal to \(\Pi_hZ^n-Z^n\). Combining the
two terms containing \(G\Pi_hZ^n\) into \(A_{\tau}V_h^n\) therefore gives
the exact identity
\begin{align}
\|Z^n\|_{\Omega}^2
&=\tau(R^n,T_{\delta}\Pi_hZ^n)_{\Omega}
-\tau(A_{\tau}V_h^n,\Pi_hZ^n-Z^n)_{\Omega}
\label{eq:adams-piecewise-affine-fluctuation-identity-main}\\
&\quad-\tau\delta(A_{\tau}V_h^n,G\Pi_hZ^n)_{\Omega}
\label{eq:adams-piecewise-affine-fluctuation-identity-streamline}
\end{align}
The \(L^2\)-stability of \(\Pi_h\), the inverse inequality, and
\(\delta\lesssim h\) show that
\begin{equation}
\|Z^n\|_{\Omega}
\lesssim\tau\bigl(\|A_{\tau}V_h^n\|_{\Omega}
+\|R^n\|_{\Omega}\bigr)
\label{eq:adams-piecewise-affine-fluctuation-bound}
\end{equation}

Take one backward difference of
\eqref{eq:adams-piecewise-affine-multiplied-equation}. At every level for
which both equations are computed,
\begin{equation}
(d^2V_h^n+\tau G\mathcal M_{\mathrm{AM}\nu}d^1V_h^n,
T_{\delta}W)_{\Omega}
=\tau(d^1R^n,T_{\delta}W)_{\Omega}
\label{eq:adams-piecewise-affine-differenced-equation}
\end{equation}
With \(W=d^2V_h^n\), skew-symmetry removes
\((d^2V_h^n,Gd^2V_h^n)_{\Omega}\). On each element,
\(Gd^1V_h^n=GZ^n\), and the affine inverse estimate gives
\begin{equation}
\|G\mathcal M_{\mathrm{AM}\nu}d^1V_h^n\|_{\Omega}
\lesssim h^{-1}\sum_{j=0}^{\nu-1}\|Z^{n-j}\|_{\Omega}
\label{eq:adams-piecewise-affine-local-inverse}
\end{equation}
Using \eqref{eq:adams-piecewise-affine-fluctuation-bound}, squaring, and
summing in time yields the following estimate. Here the finite difference
\(d^1R^n\) is bounded by its two adjacent values, and the fixed Adams
stencil has uniformly bounded overlap:
\begin{align}
\sum_{n=n_0(\nu)+1}^m\|d^2V_h^n\|_{\delta}^2
&\lesssim\frac{\tau^4}{h^2}
\sum_{n=n_0(\nu)}^m\|A_{\tau}V_h^n\|_{\Omega}^2
+\tau^2\sum_{n=n_0(\nu)}^m\|R^n\|_{\Omega}^2
\label{eq:adams-piecewise-affine-second-increment-bound}\\
&\quad+C\mathcal I_{\tau}(V_h)
\notag
\end{align}
The finitely many differences touching the start-up stencil are included in
\(\mathcal I_{\tau}(V_h)\); their forcing terms are retained in the displayed
forcing sum. For AM3, this already controls the required second increment;
for AM4, the third increment is a finite difference of \(d^2V_h\). Finally,
\(\tau\le C_{\tau\delta}\delta\) converts
\(\tau^4h^{-2}\) into at most
\(C_{\tau\delta}(\tau/h)^2\tau\delta\). This proves
\eqref{eq:adams-piecewise-affine-increment-solution}--
\eqref{eq:adams-piecewise-affine-increment-forcing}.
\end{proof}


\subsection{Exact AM3--AM5 Energy Decompositions}
\label{subsec:adams-energy-decompositions}
The signs of the bulk terms are decisive. AM3 and AM4 produce negative terms
in their highest relevant increment. AM5 instead produces a positive
third-increment term that can absorb its negative fourth-increment term under
the standard CFL restriction.
Recall the first computed indices \(n_0(3)=2\), \(n_0(4)=3\), and
\(n_0(5)=4\), and define
\begin{equation}
\mathcal E_N^{(\nu)}
=\sum_{n=n_0(\nu)}^N
(d^1U^n,\mathcal M_{\mathrm{AM}\nu}U^n)_{\delta} \label{eq:adams-discrete-energy}
\end{equation}
The common identity \eqref{eq:stability_meta}--\eqref{eq:abstract-tested-expansion-graph}
then gives
\begin{equation}
\tau\sum_{n=n_0(\nu)}^N
(A_{\tau}U^n,P_{\delta}U^n)_{\Omega}
=\mathcal E_N^{(\nu)}
+\tau\delta\sum_{n=n_0(\nu)}^N\|A_{\tau}U^n\|_{\Omega}^2 \label{eq:adams-tested-energy-identity}
\end{equation}

\begin{lem}[Exact Adams--Moulton decompositions]
\label{lem:adams-exact-energy-decompositions}
For \(\nu=3,4,5\),
\begin{equation}
\mathcal E_N^{(\nu)}=\mathcal B_N^{(\nu)}+\mathrm{BT}_N^{(\nu)} \label{eq:adams-bulk-boundary-decomposition}
\end{equation}
where the bulk and boundary terms are as follows.

\paragraph{AM3.}
\begin{align}
\mathcal B_N^{(3)}
&=-\frac1{24}\sum_{n=2}^N\|d^2U^n\|_{\delta}^2 \label{eq:am3-energy-bulk}\\
\mathrm{BT}_N^{(3)}
&=\frac12\bigl(\|U^N\|_{\delta}^2-\|U^1\|_{\delta}^2\bigr)
-\frac1{24}\bigl(\|d^1U^N\|_{\delta}^2
-\|d^1U^1\|_{\delta}^2\bigr) \label{eq:am3-energy-boundary}
\end{align}

\paragraph{AM4.}
\begin{align}
\mathcal B_N^{(4)}
&=-\frac1{48}\sum_{n=3}^N\|d^3U^n\|_{\delta}^2 \label{eq:am4-energy-bulk}\\
\mathrm{BT}_N^{(4)}
&=\frac12\bigl(\|U^N\|_{\delta}^2-\|U^2\|_{\delta}^2\bigr)
-\frac1{24}\bigl(\|d^1U^N\|_{\delta}^2
-\|d^1U^2\|_{\delta}^2\bigr) \label{eq:am4-energy-boundary-main}\\
&\quad-\frac1{24}\bigl((d^1U^N,d^2U^N)_{\delta}
-(d^1U^2,d^2U^2)_{\delta}\bigr)
+\frac1{48}\bigl(\|d^2U^2\|_{\delta}^2
-\|d^2U^N\|_{\delta}^2\bigr) \label{eq:am4-energy-boundary-increments}
\end{align}

\paragraph{AM5.}
\begin{align}
\mathcal B_N^{(5)}
&=\frac3{160}\sum_{n=4}^{N-1}\|d^3U^n\|_{\delta}^2
-\frac{19}{1440}\sum_{n=4}^{N}\|d^4U^n\|_{\delta}^2 \label{eq:am5-energy-bulk}\\
\mathrm{BT}_N^{(5)}
&=\frac12\bigl(\|U^N\|_{\delta}^2-\|U^3\|_{\delta}^2\bigr)
-\frac1{24}\bigl(\|d^1U^N\|_{\delta}^2
-\|d^1U^3\|_{\delta}^2\bigr) \label{eq:am5-energy-boundary-main}\\
&\quad-\frac1{24}(d^1U^N,d^2U^N)_{\delta}
+\frac1{24}(d^1U^3,d^2U^3)_{\delta} \notag\\
&\quad-\frac1{48}\|d^2U^N\|_{\delta}^2
+\frac1{48}\|d^2U^3\|_{\delta}^2
+\frac{19}{1440}\|d^2U^{N-1}\|_{\delta}^2
-\frac{19}{1440}\|d^2U^2\|_{\delta}^2 \notag\\
&\quad-\frac{19}{720}(d^1U^N,d^3U^N)_{\delta}
+\frac{19}{720}(d^1U^3,d^3U^3)_{\delta} \notag\\
&\quad-\frac{11}{1440}\|d^3U^N\|_{\delta}^2
+\frac{19}{720}\|d^3U^3\|_{\delta}^2 \label{eq:am5-energy-boundary-increments}
\end{align}
\end{lem}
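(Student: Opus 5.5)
The decompositions are purely algebraic identities in the inner product space \((\cdot,\cdot)_{\delta}\). The plan is to prove them by term-by-term summation by parts, starting from the centred forms \eqref{eq:am3-increment-average}--\eqref{eq:am5-increment-average}. Since \((\cdot,\cdot)_{\delta}\) is a genuine inner product, it suffices to treat one sequence in a Hilbert space. We expand
\begin{equation*}
(d^1U^n,\mathcal M_{\mathrm{AM}\nu}U^n)_{\delta}
=(d^1U^n,\widetilde U^n)_{\delta}-\tfrac1{12}(d^1U^n,d^2U^n)_{\delta}
-\tfrac1{24}(d^1U^n,d^3U^n)_{\delta}-\tfrac{19}{720}(d^1U^n,d^4U^n)_{\delta},
\end{equation*}
keeping only the terms present for the given \(\nu\). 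Each pairing is then handled by an elementary one-step identity and summed from \(n_0(\nu)\) to \(N\).

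The elementary identities we use are the following, with \(a=d^1U^n\) and \(b=d^1U^{n-1}\), and repeated differences for the higher ones. First, \((d^1U^n,\widetilde U^n)_{\delta}=\frac12(\|U^n\|_{\delta}^2-\|U^{n-1}\|_{\delta}^2)\), which telescopes to the first boundary bracket. Second, \((a,a-b)=\frac12(\|a\|^2-\|b\|^2)+\frac12\|a-b\|^2\) gives
\begin{equation*}
(d^1U^n,d^2U^n)_{\delta}=\tfrac12\bigl(\|d^1U^n\|_{\delta}^2-\|d^1U^{n-1}\|_{\delta}^2\bigr)+\tfrac12\|d^2U^n\|_{\delta}^2 .
\end{equation*}
With the factor \(-\frac1{12}\) this produces \(-\frac1{24}\) times a telescoping \(d^1\) term and the bulk \(-\frac1{24}\sum\|d^2U^n\|_{\delta}^2\). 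That settles AM3 once the sum starts at \(n=2\), so the boundary index is \(1\).

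For AM4 and AM5 the bulk \(d^2\) term from the previous step must be cancelled or converted. We rewrite the third-difference pairing via \(d^1U^n=d^1U^{n-1}+d^2U^n\) and summation by parts:
\begin{equation*}
(d^1U^n,d^3U^n)=(d^1U^n,d^2U^n)-(d^1U^{n-1},d^2U^{n-1})-(d^2U^n,d^2U^{n-1}).
\end{equation*}
The first two terms telescope, giving the mixed boundary terms \((d^1U^N,d^2U^N)_{\delta}\). The last term is written through \(-(x,y)=\frac12\|x-y\|^2-\frac12\|x\|^2-\frac12\|y\|^2\) with \(x-y=d^3U^n\). Its \(\|d^2\|^2\) parts, multiplied by \(-\frac1{24}\), combine with the \(-\frac1{24}\sum\|d^2U^n\|^2\) from the AM3 part. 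These should cancel exactly, up to the endpoint terms \(\pm\frac1{48}\|d^2U\|^2\), and leave \(-\frac1{48}\sum\|d^3U^n\|_{\delta}^2\). That matches \eqref{eq:am4-energy-bulk}--\eqref{eq:am4-energy-boundary-increments}.

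For AM5 we repeat the same procedure on \((d^1U^n,d^4U^n)\). Writing \(d^4U^n=d^3U^n-d^3U^{n-1}\), we sum by parts, which moves the difference onto \(d^1\) and yields \((d^2U^n,d^3U^{n-1})\)-type terms together with the boundary pairing \((d^1U^N,d^3U^N)_{\delta}\). We then convert these cross terms into squares of \(d^3\) and \(d^4\) using the polarisation identity and \(d^2U^n=d^2U^{n-1}+d^3U^n\). The expected bulk outcome is \(\frac{19}{720}\bigl(\frac12\sum\|d^3\|^2-\frac12\sum\|d^4\|^2\bigr)\) plus a residual \(d^3\) contribution from the AM4 part. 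Together with \(-\frac1{48}\sum\|d^3\|^2\) this should produce the coefficient
\begin{equation*}
\tfrac3{160}=\tfrac{19}{720}\cdot\tfrac32-\tfrac1{48},
\end{equation*}
to be confirmed in the calculation, with \(-\frac{19}{1440}\sum\|d^4\|^2\) remaining.

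The main obstacle is exactly this AM5 bookkeeping. The shifted summation ranges (\(N-1\) versus \(N\)) and the endpoint terms at levels \(2\), \(3\), \(N-1\) and \(N\) must be tracked precisely to reproduce \eqref{eq:am5-energy-boundary-increments}, including the \(\frac{19}{1440}\|d^2U^{N-1}\|^2\) and \(-\frac{11}{1440}\|d^3U^N\|^2\) terms. We first carry out the computation with generic coefficients \(\alpha\|d^3\|^2\) to isolate the bulk coefficients, and then collect the endpoint terms separately. As a final check, we verify the identity with \(N=n_0(\nu)\), where every sum has a single term, and on polynomial sequences \(U^n=n^p\).
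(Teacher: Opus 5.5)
Your plan is correct and is essentially the paper's proof: substitute the centred forms \eqref{eq:am3-increment-average}--\eqref{eq:am5-increment-average}, apply the summation-by-parts identities \eqref{eq:du-d2u}, \eqref{eq:adams-sbp-third-endpoints}--\eqref{eq:adams-sbp-third-bulk} and \eqref{eq:sbp4}--\eqref{eq:adams-sbp-fourth-bulk} with lower index \(n_0(\nu)\), and collect terms; the only difference is that you derive these identities, while the paper quotes them.

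There is one slip in your AM5 sketch. The pairing \(\sum_{n=4}^N(d^1U^n,d^4U^n)_{\delta}\) contains \(-\tfrac32\sum_{n=4}^{N-1}\|d^3U^n\|_{\delta}^2\), not \(-\tfrac12\), because \((d^2U^n,d^3U^{n-1})_{\delta}\) yields a full \(\|d^3U^{n-1}\|_{\delta}^2\) plus half of \(\|d^3U^n\|_{\delta}^2\). Your final check \(\tfrac3{160}=\tfrac{19}{720}\cdot\tfrac32-\tfrac1{48}\) already uses the correct factor.
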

\begin{proof}
Substitute
\eqref{eq:am3-increment-average}--\eqref{eq:am5-increment-average} into
\eqref{eq:adams-discrete-energy}. The required summation identities are
\begin{align}
\sum_{n=2}^N(d^1U^n,d^2U^n)_{\delta}
&=\frac12\bigl(\|d^1U^N\|_{\delta}^2
-\|d^1U^1\|_{\delta}^2\bigr)
+\frac12\sum_{n=2}^N\|d^2U^n\|_{\delta}^2 \label{eq:du-d2u}\\
\sum_{n=3}^N(d^1U^n,d^3U^n)_{\delta}
&=(d^1U^N,d^2U^N)_{\delta}
-(d^1U^2,d^2U^2)_{\delta} \label{eq:adams-sbp-third-endpoints}\\
&\quad-\sum_{n=3}^{N-1}\|d^2U^n\|_{\delta}^2
-\frac12\bigl(\|d^2U^2\|_{\delta}^2
+\|d^2U^N\|_{\delta}^2\bigr)
+\frac12\sum_{n=3}^N\|d^3U^n\|_{\delta}^2 \label{eq:adams-sbp-third-bulk}\\
\sum_{n=4}^N(d^1U^n,d^4U^n)_{\delta}
&=(d^1U^N,d^3U^N)_{\delta}
-(d^1U^3,d^3U^3)_{\delta} \label{eq:sbp4}\\
&\quad-\frac12\bigl(\|d^2U^{N-1}\|_{\delta}^2
-\|d^2U^2\|_{\delta}^2\bigr)
-\|d^3U^3\|_{\delta}^2
-\frac12\|d^3U^N\|_{\delta}^2 \notag\\
&\quad-\frac32\sum_{n=4}^{N-1}\|d^3U^n\|_{\delta}^2
+\frac12\sum_{n=4}^N\|d^4U^n\|_{\delta}^2 \label{eq:adams-sbp-fourth-bulk}
\end{align}
Collecting like terms gives
\eqref{eq:am3-energy-bulk}--\eqref{eq:am5-energy-boundary-increments}.
In particular, the AM4 bulk coefficient is \(-1/48\), and its initial
energy is at level \(2\). These identities also display why AM5 has a
different stability mechanism from AM3 and AM4.
\end{proof}


\subsection{Lower Bounds for the Energy Decompositions}
\label{subsec:adams-energy-lower-bounds}
The stability proof requires a lower bound for \(\mathcal E_N^{(\nu)}\).
We therefore estimate its bulk and boundary contributions separately and
then recombine them in the stability theorem. The direction of the bulk
estimate is essential.

\begin{lem}[Bulk lower bounds]
\label{lem:adams-bulk-lower-bounds}
Let \(V_h\) solve \eqref{eq:am-general-forced-equation}, and evaluate the
bulk terms on this sequence. Under \eqref{eq:cflcond} and the relevant
strengthened condition, the AM3 and AM4 bulk terms satisfy
\begin{align}
\mathcal B_m^{(\nu)}
&\ge-C\tau C_{2(\nu-1)/(2\nu-3)}^{\,2\nu-3}
\sum_{n=0}^{m-1}\|V_h^n\|_{\delta}^2 \label{eq:am34-bulk-lower-bound-solution}\\
&\quad-C\tau^2\sum_{n=n_0(\nu)}^{m}\|R^n\|_{\Omega}^2
-C\mathcal I_{\tau}(V_h) \label{eq:am34-bulk-lower-bound-forcing}
\end{align}
For AM5,
\begin{align}
\mathcal B_m^{(5)}
&\ge\left(\frac3{160}-Cc_0^2\right)
\sum_{n=4}^{m-1}\|d^3V_h^n\|_{\delta}^2 \label{eq:am5-bulk-lower-bound-solution}\\
&\quad-C\tau^2\sum_{n=n_0(5)}^{m}\|R^n\|_{\Omega}^2
-C\mathcal I_{\tau}(V_h) \label{eq:am5-bulk-lower-bound-forcing}
\end{align}
\end{lem}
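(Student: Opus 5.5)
This is a direct consequence of the exact decompositions in Lemma~\ref{lem:adams-exact-energy-decompositions} combined with the increment bounds of Lemma~\ref{lem:adams-increment-recurrence}. The plan is to bound each negative bulk sum from below by minus an upper bound for the corresponding sum of squared highest increments. Everything is evaluated on the solution \(V_h\) of \eqref{eq:am-general-forced-equation} and at horizon \(m\) in place of \(N\).

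\emph{AM3 and AM4.} By \eqref{eq:am3-energy-bulk} and \eqref{eq:am4-energy-bulk},
\begin{equation*}
\mathcal B_m^{(\nu)}=-c_\nu\sum_{n=\nu-1}^{m}\|d^{\nu-1}V_h^n\|_{\delta}^2,
\qquad c_3=\tfrac1{24},\quad c_4=\tfrac1{48}.
\end{equation*}
The bulk estimate \eqref{eq:adams-high-increment-bulk-solution}--\eqref{eq:adams-high-increment-bulk-forcing} bounds this sum above by
\begin{equation*}
C\tau C_{2(\nu-1)/(2\nu-3)}^{\,2\nu-3}\sum_{n=0}^{m-1}\|V_h^n\|_{\delta}^2
+C\tau^2\sum_{n=n_0(\nu)}^m\|R^n\|_{\Omega}^2+C\mathcal I_{\tau}(V_h).
\end{equation*}
That estimate itself uses \eqref{eq:cflcond} together with the strengthened conditions \eqref{eq:am3-strengthened-cfl}--\eqref{eq:am4-strengthened-cfl}, through the power relations \eqref{eq:am3-cfl-power}--\eqref{eq:am4-cfl-power}. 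Multiplying by \(-c_\nu\) gives \eqref{eq:am34-bulk-lower-bound-solution}--\eqref{eq:am34-bulk-lower-bound-forcing} with an adjusted constant \(C\). One point needs checking: the sum in \eqref{eq:am3-energy-bulk}--\eqref{eq:am4-energy-bulk} starts at \(n=\nu-1\), which for AM4 is \(n=3=n_0(4)\). This matches the index range of the bulk estimate, which already absorbs the early increments touching the start-up stencil into \(\mathcal I_{\tau}(V_h)\).

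\emph{AM5.} From \eqref{eq:am5-energy-bulk},
\begin{equation*}
\mathcal B_m^{(5)}=\tfrac3{160}\sum_{n=4}^{m-1}\|d^3V_h^n\|_{\delta}^2-\tfrac{19}{1440}\sum_{n=4}^{m}\|d^4V_h^n\|_{\delta}^2 .
\end{equation*}
I would apply \eqref{eq:am5-fourth-increment-control-solution}--\eqref{eq:am5-fourth-increment-control-forcing} to the negative sum. It is bounded by \(Cc_0^2\sum_{n=4}^{m-1}\|d^3V_h^n\|_{\delta}^2\), plus the forcing and start-up terms. The third-increment sum here runs over exactly the same range \(4\le n\le m-1\) as the positive term, because the recurrence at level \(n\) with \(j=4\) only involves previous third differences \(d^3V_h^{n-1-q}\). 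Combining the two terms yields the coefficient \(\frac3{160}-Cc_0^2\), with the factor \(19/1440\) absorbed into \(C\). The remaining terms give \eqref{eq:am5-bulk-lower-bound-forcing}.

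\emph{Main obstacle.} The main obstacle is the index bookkeeping rather than the estimates themselves. For AM5, any \(d^3V_h^{n-1-q}\) with index below \(4\) produced by the recurrence must not end up in the bulk sum. Such terms involve only levels that precede or touch the first computed equation, so their solution parts go into \(C\mathcal I_{\tau}(V_h)\) while their forcing parts remain in the displayed forcing sum. I would verify this explicitly, as in the proof of Lemma~\ref{lem:adams-increment-recurrence}. Once that is done, no Gronwall or CFL argument beyond those already packaged in Lemma~\ref{lem:adams-increment-recurrence} is needed. Choosing \(c_0\) small so that \(\frac3{160}-Cc_0^2>0\) is deferred to the stability theorem.
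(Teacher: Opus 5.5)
Your proposal is correct and matches the paper's proof. For AM3 and AM4 you insert the bulk increment bound \eqref{eq:adams-high-increment-bulk-solution}--\eqref{eq:adams-high-increment-bulk-forcing} into the negative identities \eqref{eq:am3-energy-bulk} and \eqref{eq:am4-energy-bulk}; for AM5 you substitute \eqref{eq:am5-fourth-increment-control-solution}--\eqref{eq:am5-fourth-increment-control-forcing} into \eqref{eq:am5-energy-bulk}. Your additional index bookkeeping, which matches the summation ranges and routes start-up-touching increments into \(\mathcal I_{\tau}(V_h)\), is consistent with what Lemma~\ref{lem:adams-increment-recurrence} already provides.
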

\begin{proof}
For AM3 and AM4, combine the negative identities
\eqref{eq:am3-energy-bulk} and \eqref{eq:am4-energy-bulk} with
\eqref{eq:adams-high-increment-bulk-solution}--\eqref{eq:adams-high-increment-bulk-forcing}.
This gives a lower bound, not an upper bound. For AM5, substitute
\eqref{eq:am5-fourth-increment-control-solution}--\eqref{eq:am5-fourth-increment-control-forcing}
into \eqref{eq:am5-energy-bulk}.
\end{proof}

\begin{lem}[Boundary lower bound]
\label{lem:adams-boundary-lower-bound}
Let \(V_h\) solve \eqref{eq:am-general-forced-equation}. For every truncated
horizon \(J\), \(n_0(\nu)\le J\le N\), choose \(m_J\) so that
\begin{equation}
\|V_h^{m_J}\|_{\delta}
=\max_{n_0(\nu)\le n\le J}\|V_h^n\|_{\delta} \label{eq:adams-maximum-energy-index}
\end{equation}
For \(c_0\) sufficiently small,
\begin{align}
\mathrm{BT}_{m_J}^{(\nu)}
&\ge\left(\frac12-Cc_0^2\right)
\|V_h^{m_J}\|_{\delta}^2
-C\mathcal I_{\tau}(V_h) \label{eq:adams-boundary-lower-bound-solution}\\
&\quad-C\tau^2\sum_{n=n_0(\nu)}^{m_J}\|R^n\|_{\Omega}^2
\label{eq:adams-boundary-lower-bound-forcing}
\end{align}
\end{lem}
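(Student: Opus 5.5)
The boundary term \(\mathrm{BT}_{m_J}^{(\nu)}\) splits into three groups, each treated separately. The first is the leading telescoping energy \(\tfrac12\|V_h^{m_J}\|_\delta^2\). The second is the horizon increment terms, which involve \(d^jV_h^{m_J}\) for \(1\le j\le\nu-2\) and, for AM5, also \(d^2V_h^{m_J-1}\) and \(d^3V_h^{m_J}\). The third is the initial-level terms, evaluated at levels \(n_0(\nu)-1\) and, for AM5, also at level \(2\). We read off these groups from Lemma~\ref{lem:adams-exact-energy-decompositions} with \(N\) replaced by \(m_J\).

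\emph{Initial-level terms.} These involve only \(V_h^j\) with \(j\le n_0(\nu)-1\) and their increments. Examples are \(\tfrac12\|V_h^{n_0-1}\|_\delta^2\), \(\|d^1V_h^{n_0-1}\|_\delta^2\), \((d^1V_h^{2},d^2V_h^{2})_\delta\) and \(\|d^3V_h^3\|_\delta^2\). Each increment is a fixed linear combination of start-up values. By the triangle inequality and Cauchy--Schwarz, every such term is bounded in absolute value by \(C\sum_{j<n_0}\|V_h^j\|_\delta^2=C\mathcal I_\tau(V_h)\). This uses \(\|\cdot\|_\delta=\|T_\delta\cdot\|_\Omega\) from \eqref{eq:graph-norm}, and it needs no CFL condition. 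Since these terms may enter with either sign, we simply bound them from below by \(-C\mathcal I_\tau(V_h)\).

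\emph{Horizon increment terms.} The mixed products such as \((d^1V_h^{m_J},d^2V_h^{m_J})_\delta\) and \((d^1V_h^{m_J},d^3V_h^{m_J})_\delta\) are bounded by Young's inequality by squares of single increments. The third increment satisfies \(\|d^3V_h^{m_J}\|_\delta\le\|d^2V_h^{m_J}\|_\delta+\|d^2V_h^{m_J-1}\|_\delta\). So everything reduces to \(\sum_{j=1}^{\nu-2}\|d^jV_h^{m_J}\|_\delta^2\) together with, for AM5, \(\|d^2V_h^{m_J-1}\|_\delta^2\). The endpoint bounds \eqref{eq:adams-endpoint-increment-solution}--\eqref{eq:adams-endpoint-increment-startup} and \eqref{eq:am5-previous-endpoint-increment}, applied at horizon \(m=m_J\), bound these quantities by \(Cc_0^2\max_{0\le n\le m_J}\|V_h^n\|_\delta^2+C\tau^2\sum_{n_0}^{m_J}\|R^n\|_\Omega^2+C\mathcal I_\tau(V_h)\). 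We drop the helpful negative sign of the \(\tfrac{19}{1440}\)-type terms only where it is convenient.

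\emph{Recombination, and the main obstacle.} The delicate step is converting \(\max_{0\le n\le m_J}\|V_h^n\|_\delta^2\) into \(\|V_h^{m_J}\|_\delta^2\). This is exactly why \(m_J\) is chosen as a maximiser in \eqref{eq:adams-maximum-energy-index}. The maximum over \(0\le n\le m_J\) splits into computed levels \(n\ge n_0(\nu)\) and start-up levels. On the computed levels, because \(m_J\le J\), the maximum equals \(\|V_h^{m_J}\|_\delta^2\). On the start-up levels, the maximum is bounded by \(\mathcal I_\tau(V_h)\). Collecting everything gives \(\mathrm{BT}^{(\nu)}_{m_J}\ge(\tfrac12-Cc_0^2)\|V_h^{m_J}\|_\delta^2-C\mathcal I_\tau(V_h)-C\tau^2\sum_{n_0}^{m_J}\|R^n\|_\Omega^2\). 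Choosing \(c_0\) small makes the leading coefficient positive. The constants \(C\) depend only on the fixed Adams coefficients and on the norm-equivalence constant in \eqref{eq:adams-graph-l2-equivalence}, so they are uniform in \(J\), \(h\), and \(\tau\).

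One point needs checking: that the endpoint lemma is applicable when \(m_J\) is close to \(n_0(\nu)\). In that case some of the required increments reach back into the start-up stencil. According to Lemma~\ref{lem:adams-increment-recurrence}, those increments are estimated directly from the computed updates, with their solution parts absorbed into \(\mathcal I_\tau(V_h)\). So the bound holds uniformly for every \(J\).
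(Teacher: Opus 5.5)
Your proposal is correct and follows essentially the same route as the paper. Both arguments apply Young's inequality to the final cross terms, invoke the endpoint increment bounds \eqref{eq:adams-endpoint-increment-solution}--\eqref{eq:am5-previous-endpoint-increment} at horizon \(m_J\), bound all start-up-level terms from below by \(-C\mathcal I_{\tau}(V_h)\), and absorb the remainder into \(\frac12\|V_h^{m_J}\|_{\delta}^2\) for small \(c_0\). Your split of \(\max_{0\le n\le m_J}\|V_h^n\|_{\delta}^2\) into computed levels (equal to \(\|V_h^{m_J}\|_{\delta}^2\) by the choice of \(m_J\)) and start-up levels (bounded by \(\mathcal I_{\tau}(V_h)\)) simply spells out a step the paper leaves implicit.
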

\begin{proof}
Apply Young's inequality to the endpoint cross terms in
\eqref{eq:am3-energy-boundary},
\eqref{eq:am4-energy-boundary-main}--\eqref{eq:am4-energy-boundary-increments},
and \eqref{eq:am5-energy-boundary-main}--\eqref{eq:am5-energy-boundary-increments}.
The previous-value recurrence gives
\eqref{eq:adams-endpoint-increment-solution}--\eqref{eq:am5-previous-endpoint-increment}.
It bounds every final increment occurring in the boundary formulas by the
maximum graph energy, the full squared forcing contribution displayed in
\eqref{eq:adams-boundary-lower-bound-forcing}, and start-up data. The terms at
levels \(1\), \(2\), or \(3\) have no fixed sign and are therefore bounded
below by \(-C\mathcal I_{\tau}(V_h)\). Young's inequality then absorbs the
remaining final-increment terms into the leading one-half of the final
energy for \(c_0\) sufficiently small.
\end{proof}


\subsection{Stability and Error Estimates}
\label{subsec:adams-stability-error}

\paragraph{Adams Method-Adapted Norm.}
The norm must reflect the bulk term available for each method. For the
AM\(\nu\) operator \(A_{\tau}\), use \(n_0=n_0(\nu)\) in
\(\mathcal I_{\tau}\), and define the common graph energy by
\begin{equation}
e_{\tau,\nu}^n(V)=\|T_{\delta}V^n\|_{\Omega}^2 \label{eq:adams-method-energy}
\end{equation}
For AM3 and AM4, the dissipation consists only of the material residual:
\begin{equation}
d_{\tau,\nu}^n(V)=\delta\|A_{\tau}V^n\|_{\Omega}^2
\quad \nu\in\{3,4\} \label{eq:am34-method-dissipation}
\end{equation}
For AM5, the positive third-increment bulk term supplies an additional
dissipation component, so we define
\begin{align}
d_{\tau,5}^4(V)&=\delta\|A_{\tau}V^4\|_{\Omega}^2 \label{eq:am5-initial-dissipation}\\
d_{\tau,5}^n(V)&=\delta\|A_{\tau}V^n\|_{\Omega}^2
+\tau^{-1}\|d^3V^{n-1}\|_{\delta}^2
\quad n\ge5 \label{eq:am5-method-dissipation}
\end{align}
Thus the corresponding sequence norms are exactly
\begin{align}
\|V\|_{\mathcal E_\nu,m}^2
&=\max_{n_0(\nu)\le n\le m}\|T_{\delta}V^n\|_{\Omega}^2
+\tau\delta\sum_{n=n_0(\nu)}^m\|A_{\tau}V^n\|_{\Omega}^2
\quad \nu\in\{3,4\} \label{eq:am34-method-adapted-norm}\\
\|V\|_{\mathcal E_5,m}^2
&=\max_{4\le n\le m}\|T_{\delta}V^n\|_{\Omega}^2
+\tau\delta\sum_{n=4}^m\|A_{\tau}V^n\|_{\Omega}^2
+\sum_{n=4}^{m-1}\|d^3V^n\|_{\delta}^2 \label{eq:am5-method-adapted-norm}
\end{align}
With these method-dependent choices understood, define the corresponding
lifted dual norm by
\begin{equation}
\|R\|_{\mathcal E_\nu^*,N}
=\max_{n_0(\nu)\le m\le N}
\sup_{\substack{V^n=0\text{ for }n\notin\{0,\ldots,m\}\\
\mathcal I_{\tau}(V)+\|V\|_{\mathcal E_\nu,m}^2>0}}
\frac{\tau\sum_{n=n_0(\nu)}^m(R^n,P_{\delta}V^n)_{\Omega}}
{\bigl(\mathcal I_{\tau}(V)+\|V\|_{\mathcal E_\nu,m}^2\bigr)^{1/2}}
\label{eq:adams-lifted-dual-norm}
\end{equation}
The finite start-up stencil is included in the denominator, and the test
sequence is zero-extended after its horizon before \(P_{\delta}\) is
applied. The shift in \eqref{eq:am5-method-dissipation} makes the last term of
\eqref{eq:am5-method-adapted-norm} agree exactly with the positive AM5 bulk
sum in \eqref{eq:am5-energy-bulk}.

\begin{thm}[Adams--Moulton stability]
\label{thm:adams-moulton-stability}
Assume the inverse inequality \eqref{eq:adams-inverse-inequality} and the CFL
condition \eqref{eq:cflcond}, with \(c_0\) sufficiently small. For AM3 also
assume \eqref{eq:am3-strengthened-cfl}, and for AM4 assume
\eqref{eq:am4-strengthened-cfl}. If \(V_h\) solves
\eqref{eq:am-general-forced-equation}, then
\begin{equation}
\|V_h\|_{\mathcal E_\nu,N}^2
\lesssim\mathcal I_{\tau}(V_h)
+\|R\|_{\mathcal E_\nu^*,N}^2
+\tau\|R\|_{\ell_{\tau}^2(n_0(\nu),N;\mathcal H)}^2
\label{eq:adams-stability-right-hand-side}
\end{equation}
For AM3 and AM4, the hidden constant depends exponentially on
\(TC_{2(\nu-1)/(2\nu-3)}^{\,2\nu-3}\). For AM5 it is independent of a
strengthened CFL constant.
The theorem requires only \(0<\delta\le C_{\delta}h\); hence all three
conclusions include the symmetric positive definite normal-equation choice
\(\delta=b_0\tau\) under
their stated CFL conditions.
\end{thm}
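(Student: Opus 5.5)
The plan is to verify the abstract energy hypothesis \eqref{eq:energy-coercivity}--\eqref{eq:abstract-energy-hypothesis-defect} for each AM\(\nu\), with the energies and dissipations \eqref{eq:adams-method-energy}--\eqref{eq:am5-method-dissipation}, and then repeat the absorption argument of Theorem \ref{thm:meta-stability}. For AM3 and AM4 the bulk lower bound contains a term proportional to \(\tau\sum_{n<m}\|V_h^n\|_{\delta}^2\), so a discrete Gronwall step has to be inserted before the absorption.

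First, test \eqref{eq:am-general-forced-equation} with \(W=S_{\delta}V_h^n\), multiply by \(\tau\), and sum from \(n_0(\nu)\) to a horizon \(m\). Identity \eqref{eq:adams-tested-energy-identity} together with Lemma \ref{lem:adams-exact-energy-decompositions} then gives
\begin{equation*}
\mathcal B_m^{(\nu)}+\mathrm{BT}_m^{(\nu)}
+\tau\delta\sum_{n=n_0(\nu)}^m\|A_{\tau}V_h^n\|_{\Omega}^2
=\tau\sum_{n=n_0(\nu)}^m(R^n,P_{\delta}V_h^n)_{\Omega}.
\end{equation*}
The two pieces on the left are bounded separately.
\begin{itemize}
\item The bulk term is bounded below by Lemma \ref{lem:adams-bulk-lower-bounds}. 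For AM5, with \(c_0\) small, this lower bound gives a positive multiple of \(\sum_{n=4}^{m-1}\|d^3V_h^n\|_{\delta}^2\), which is exactly the extra dissipation in \eqref{eq:am5-method-dissipation}.
\item The boundary term is bounded below by Lemma \ref{lem:adams-boundary-lower-bound}, applied at the maximising index \(m=m_J\) of \eqref{eq:adams-maximum-energy-index}. This yields \((\tfrac12-Cc_0^2)\|V_h^{m_J}\|_{\delta}^2\) minus start-up terms and \(\tau^2\sum\|R^n\|_{\Omega}^2\).
\end{itemize}
The last forcing contribution is exactly \(\tau\|R\|^2_{\ell^2_\tau}\), the admissible strong defect. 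I will use both horizons, \(m=m_J\) for the maximum and \(m=N\) for the full dissipation. At the horizon \(m=N\), the boundary term is not evaluated at a maximiser, but Lemma \ref{lem:adams-endpoint-increment-solution}-type endpoint bounds still give \(\mathrm{BT}_N\ge -Cc_0^2\max_n\|V_h^n\|_{\delta}^2-\cdots\), which the maximum term absorbs.

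\textbf{AM5.} Here coercivity \eqref{eq:energy-coercivity}--\eqref{eq:dissipation-coercivity} is immediate. Adding the two horizon inequalities and bounding the forcing by \eqref{eq:dual-forcing-bound}, with the AM\(\nu\) dual norm \eqref{eq:adams-lifted-dual-norm}, reproduces the proof of Theorem \ref{thm:meta-stability} verbatim.

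\textbf{AM3 and AM4.} The remaining obstacle is the negative term \(C\tau K\sum_{n=0}^{m-1}\|V_h^n\|_{\delta}^2\), with \(K=C_{2(\nu-1)/(2\nu-3)}^{2\nu-3}\). I would fix a truncated horizon \(J\) and write \(y_J=\max_{n_0\le n\le J}\|V_h^n\|_{\delta}^2\). The energy inequality at \(m_J\) then reads
\begin{equation*}
y_J\lesssim \mathcal I_\tau+\tau K\sum_{n<J} y_n+|\text{forcing}_{m_J}|+\tau^2\sum\|R^n\|_{\Omega}^2 .
\end{equation*}
The forcing functional at \(m_J\) is bounded by \(\|R\|_{\mathcal E^*,J}(\mathcal I_\tau+\|V_h\|^2_{\mathcal E,J})^{1/2}\), which is monotone in \(J\). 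Young's inequality splits off a small multiple of \(\|V_h\|^2_{\mathcal E,J}\). That multiple is absorbed by adding the horizon-\(J\) inequality, which also controls the dissipation up to \(J\).

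This produces a recursion \(Y_J\le C(\mathcal I_\tau+\|R\|^2_{\mathcal E^*,N}+\tau\|R\|^2_{\ell^2_\tau})+CK\tau\sum_{n<J}Y_n\), where \(Y_J=\|V_h\|^2_{\mathcal E,J}\). The discrete Gronwall lemma then gives the estimate with constant \(\exp(CKT)\), which explains the stated exponential dependence.

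The delicate point is uniformity in the horizon. The dual norm must be taken as a maximum over horizons, so that it bounds each truncated forcing sum. Start-up values are placed into \(\mathcal I_\tau\) by the norm equivalence \eqref{eq:adams-graph-l2-equivalence}.

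Finally, since only \(\delta\le C_\delta h\) has been used, through \eqref{eq:adams-graph-l2-equivalence} and the recurrence lemmas, the normal-equation choice \(\delta=b_0\tau\le b_0c_0C_G^{-1}h\) is covered under the stated CFL conditions.
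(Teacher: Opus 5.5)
Your proposal is correct and follows the paper's proof. It tests with \(S_{\delta}V_h^n\), inserts the exact decompositions together with the bulk and boundary lower bounds at the running-maximum index \(m_J\), and bounds the forcing through the horizon-maximal dual norm and Young's inequality. It then uses a discrete Gronwall step with constant \(\exp(CTK_\nu)\) for AM3 and AM4, and direct absorption of the fourth-increment term by the positive third-increment sum for AM5. The only difference is organisational: you run the Gronwall recursion on the full sequence norm \(\|V_h\|_{\mathcal E_\nu,J}^2\) by adding the horizon-\(m_J\) and horizon-\(J\) inequalities. The paper instead runs it on the graph maximum \(M_J\) alone and then makes a separate pass at the final horizon \(N\) to recover the complete material-residual sum.
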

\begin{proof}
The proof has two distinct branches. For AM3 and AM4, the negative bulk term
is controlled by the increment recurrence and then by a discrete Gronwall
argument. For AM5, the positive third-increment term absorbs the negative
fourth-increment term, so no Gronwall step is needed.
Use \(S_{\delta}V_h^n\) as the test function in
\eqref{eq:am-general-forced-equation}, sum to an arbitrary horizon, and
insert Lemma \ref{lem:adams-exact-energy-decompositions}. Write
\begin{equation}
K_\nu=C_{2(\nu-1)/(2\nu-3)}^{\,2\nu-3},\qquad
M_J=\max_{n_0(\nu)\le n\le J}\|V_h^n\|_{\delta}^2
\label{eq:am-gronwall-quantities}
\end{equation}
For each \(J\), choose the running maximum index \(m_J\) from
\eqref{eq:adams-maximum-energy-index} and apply the energy identity on the
truncated interval ending at \(m_J\). For AM3 and AM4, Lemmas
\ref{lem:adams-bulk-lower-bounds} and
\ref{lem:adams-boundary-lower-bound}, together with the dual-norm definition
and Young's inequality, give
\begin{align}
M_J
&\le C\mathcal A_N
+C\tau K_\nu\sum_{n=n_0(\nu)}^{J-1}M_n
\label{eq:am-running-maximum-recursion}\\
\mathcal A_N
&=\mathcal I_{\tau}(V_h)+\|R\|_{\mathcal E_\nu^*,N}^2
+\tau\|R\|_{\ell_{\tau}^2(n_0(\nu),N;\mathcal H)}^2
\label{eq:am-stability-data-quantity}
\end{align}
Crucially, the bulk estimate contains
\(\sum_{n=0}^{m_J-1}\|V_h^n\|_{\delta}^2\), not the current value. Its
start-up part is controlled by \(\mathcal I_{\tau}(V_h)\), and its computed
part is bounded by \(\sum_{n=n_0(\nu)}^{J-1}M_n\). The dual forcing term is
bounded by
\begin{equation}
\|R\|_{\mathcal E_\nu^*,N}
\bigl(\mathcal I_{\tau}(V_h)
+\|V_h\|_{\mathcal E_\nu,m_J}^2\bigr)^{1/2}
\label{eq:am-dual-forcing-at-running-maximum}
\end{equation}
so Young's inequality absorbs the running graph energy and the partial
material-residual sum. The endpoint and bulk forcing defects are exactly
\(C\tau^2\sum_{n=n_0(\nu)}^{m_J}\|R^n\|_{\Omega}^2\), which is bounded by the
last term of \(\mathcal A_N\).

The standard discrete Gronwall lemma applied to
\eqref{eq:am-running-maximum-recursion} yields
\begin{equation}
M_N\le C\mathcal A_N\exp(C T K_\nu)
\label{eq:am-running-maximum-gronwall}
\end{equation}
which displays the claimed dependence on the strengthened CFL constant.
Applying the energy identity once more at the final horizon \(N\), using
\eqref{eq:am-running-maximum-gronwall} for all preceding solution terms, and
again applying Young's inequality controls the complete
\(\tau\delta\sum_{n=n_0(\nu)}^N\|A_{\tau}V_h^n\|_{\Omega}^2\) sum.

For AM5, choose \(c_0\) so that the negative fourth-increment contribution
in Lemma \ref{lem:adams-bulk-lower-bounds} is absorbed by the positive
third-increment sum. The same running-maximum and dual-norm argument then
controls the graph energy without Gronwall. Repeating it at \(N\) controls
the full material-residual and third-increment sums. These arguments prove
\eqref{eq:adams-stability-right-hand-side} for the general forcing \(R\).
\end{proof}

\begin{cor}[Piecewise affine stability under hyperbolic CFL]
\label{cor:adams-piecewise-affine-stability}
Assume the piecewise affine mapping property
\eqref{eq:adams-piecewise-affine-mapping} and the parameter conditions
\eqref{eq:adams-piecewise-affine-parameters}, with \(c_0\) sufficiently
small. Then the AM3 and AM4 solutions satisfy
\eqref{eq:adams-stability-right-hand-side} without the strengthened
conditions \eqref{eq:am3-strengthened-cfl}--
\eqref{eq:am4-strengthened-cfl}. In particular, this includes the standard
hyperbolic regime \(\tau\lesssim\delta\lesssim h\), subject to the stated
smallness of \(\tau/h\).
\end{cor}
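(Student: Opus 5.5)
The plan is to repeat the proof of Theorem~\ref{thm:adams-moulton-stability} for AM3 and AM4, changing only the bulk lower bound. Everything else carries over unchanged: the tested identity \eqref{eq:adams-tested-energy-identity}, the exact decompositions of Lemma~\ref{lem:adams-exact-energy-decompositions}, and the boundary lower bound of Lemma~\ref{lem:adams-boundary-lower-bound}. The boundary lemma uses only the endpoint increment bounds \eqref{eq:adams-endpoint-increment-solution}--\eqref{eq:adams-endpoint-increment-startup}, which need just \eqref{eq:cflcond}. The single place where \eqref{eq:am3-strengthened-cfl}--\eqref{eq:am4-strengthened-cfl} entered is the bulk estimate \eqref{eq:adams-high-increment-bulk-solution}. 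I will replace it by Lemma~\ref{lem:adams-piecewise-affine-increment-bound}, whose hypotheses are exactly \eqref{eq:adams-piecewise-affine-mapping}--\eqref{eq:adams-piecewise-affine-parameters}. Those parameter conditions include \(\delta\le C_\delta h\) and \(\tau\le c_0C_G^{-1}h\), so the rest of the earlier machinery applies.

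Concretely, for \(\nu\in\{3,4\}\) the negative bulk terms \eqref{eq:am3-energy-bulk} and \eqref{eq:am4-energy-bulk} are bounded below using \eqref{eq:adams-piecewise-affine-increment-solution}--\eqref{eq:adams-piecewise-affine-increment-forcing}:
\begin{equation*}
\mathcal B_m^{(\nu)}\ge -C\,C_{\tau\delta}\,c_0^2C_G^{-2}\,\tau\delta\sum_{n=n_0(\nu)}^m\|A_{\tau}V_h^n\|_{\Omega}^2
-C\tau^2\sum_{n=n_0(\nu)}^m\|R^n\|_{\Omega}^2-C\mathcal I_{\tau}(V_h).
\end{equation*}
Here I used \((\tau/h)^2\le c_0^2C_G^{-2}\). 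The key point is that the right-hand side contains no solution sum \(\sum\|V_h^n\|_\delta^2\), only the material-residual dissipation. That dissipation appears with coefficient one in \eqref{eq:adams-tested-energy-identity}. Choosing \(c_0\) small enough that \(CC_{\tau\delta}c_0^2C_G^{-2}\le 1/2\) absorbs the negative bulk contribution into half of the dissipation, so no Gronwall step is needed, much as in the AM5 branch.

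The assembly then follows the AM5 branch of the earlier proof. Fix \(J\), take the running-maximum index \(m_J\) from \eqref{eq:adams-maximum-energy-index}, and write the summed energy identity on \([n_0(\nu),m_J]\). Bound the boundary terms below with Lemma~\ref{lem:adams-boundary-lower-bound}, giving \((1/2-Cc_0^2)\|V_h^{m_J}\|_\delta^2\), and the bulk terms with the display above. The forcing is bounded by \eqref{eq:am-dual-forcing-at-running-maximum} together with Young's inequality, and the defects \(\tau^2\sum\|R^n\|^2\) are exactly \(\tau\|R\|^2_{\ell^2_\tau}\). This controls \(M_J\) and the partial dissipation by \(\mathcal A_N\) from \eqref{eq:am-stability-data-quantity}. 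Repeating the argument at horizon \(N\) controls the full sum \(\tau\delta\sum\|A_\tau V_h^n\|^2\), which yields \eqref{eq:adams-stability-right-hand-side}.

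The main obstacle is bookkeeping of horizons and constants, not analysis. The absorption constant must not depend on \(m\). Lemma~\ref{lem:adams-piecewise-affine-increment-bound} holds uniformly in the horizon, which is what makes this work. The smallness of \(c_0\) is then fixed once, depending on \(C_{\tau\delta}\), \(C_\delta\), \(C_G\) and the hidden constants, and it must be compatible with the smallness already needed in Lemma~\ref{lem:adams-boundary-lower-bound}. Since \(C_{\tau\delta}\) is arbitrary but fixed, this is precisely the stated regime \(\tau\lesssim\delta\lesssim h\) with \(\tau/h\) small.
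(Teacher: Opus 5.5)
Your proposal is correct and follows the paper's proof. You substitute Lemma~\ref{lem:adams-piecewise-affine-increment-bound} into the negative AM3/AM4 bulk terms and absorb the resulting \(C\,C_{\tau\delta}c_0^2\,\tau\delta\sum_n\|A_{\tau}V_h^n\|_{\Omega}^2\) into the material-residual term of \eqref{eq:adams-tested-energy-identity} for small \(c_0\); you then finish with Lemma~\ref{lem:adams-boundary-lower-bound} and the running-maximum and final-horizon arguments of Theorem~\ref{thm:adams-moulton-stability}, with no Gronwall step and no strengthened CFL condition, exactly as the paper does.
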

\begin{proof}
At an arbitrary horizon, substitute Lemma
\ref{lem:adams-piecewise-affine-increment-bound} into the negative bulk
terms \eqref{eq:am3-energy-bulk} and \eqref{eq:am4-energy-bulk}. This gives
\begin{align}
\mathcal B_m^{(\nu)}
&\ge-C C_{\tau\delta}c_0^2\tau\delta
\sum_{n=n_0(\nu)}^m\|A_{\tau}V_h^n\|_{\Omega}^2
\label{eq:adams-piecewise-affine-bulk-lower-bound-solution}\\
&\quad-C\tau^2\sum_{n=n_0(\nu)}^m\|R^n\|_{\Omega}^2
-C\mathcal I_{\tau}(V_h)
\label{eq:adams-piecewise-affine-bulk-lower-bound-forcing}
\end{align}
Choose \(c_0\) so that the first term is absorbed by the material-residual
term in \eqref{eq:adams-tested-energy-identity}. Lemma
\ref{lem:adams-boundary-lower-bound} controls the endpoint terms under the
same hyperbolic CFL condition. The maximum-horizon and final-horizon
arguments in the proof of Theorem \ref{thm:adams-moulton-stability} now give
\eqref{eq:adams-stability-right-hand-side}; no Gronwall term and no
strengthened CFL condition are needed.
\end{proof}

For the original inhomogeneous AM\(\nu\) scheme, take
\(R^n=\mathcal M_{\mathrm{AM}\nu}F^n\) in
\eqref{eq:adams-stability-right-hand-side}. Thus its right-hand side is
\begin{equation}
\mathcal I_{\tau}(U_h)
+\|\mathcal M_{\mathrm{AM}\nu}F\|_{\mathcal E_\nu^*,N}^2
+\tau\|\mathcal M_{\mathrm{AM}\nu}F\|_{
\ell_{\tau}^2(n_0(\nu),N;\mathcal H)}^2
\label{eq:adams-physical-stability-right-hand-side}
\end{equation}

\begin{cor}[Adams--Moulton error estimate]
\label{cor:adams-moulton-error}
Assume either the stability hypotheses of Theorem
\ref{thm:adams-moulton-stability} or, for AM3--AM4, the piecewise affine
hypotheses of Corollary \ref{cor:adams-piecewise-affine-stability}. Also
assume the regularity hypotheses of Proposition
\ref{prop:dual-residual-bound} and projected start-up data. Then
\eqref{eq:abstract-error-estimate-space}--\eqref{eq:abstract-error-estimate-time}
holds with \(\nu=3,4,5\) for AM3, AM4, and AM5, respectively.
\end{cor}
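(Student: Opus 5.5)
The proof is an instance of Theorem \ref{thm:meta-half}: we supply the three ingredients that theorem needs for AM\(\nu\), \(\nu\in\{3,4,5\}\), and then rerun its short argument with the method-adapted norms \(\|\cdot\|_{\mathcal E_\nu,m}\) and \(\|\cdot\|_{\mathcal E_\nu^*,N}\).

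\emph{Step 1: stability for the error equation.} The discrete error \(\phi^n=U_h^n-\Pi_hU(t^n)\) solves \eqref{eq:error-equation} with residual \(R^n=A_{\tau}\eta^n-\rho^n\). This is exactly the generally forced equation \eqref{eq:am-general-forced-equation}. Under the general-degree hypotheses, Theorem \ref{thm:adams-moulton-stability} applies to \(V_h=\phi\). Under the piecewise affine hypotheses, Corollary \ref{cor:adams-piecewise-affine-stability} applies instead. In both cases we obtain \eqref{eq:adams-stability-right-hand-side} for \(\phi\). Since the start-up data are projected, \(\phi^j=0\) for \(0\le j<n_0(\nu)\), so \(\mathcal I_{\tau}(\phi)=0\).

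\emph{Step 2: consistency.} Section \ref{sec:trunc-adams} supplies the local bound \eqref{eq:adams-truncation-bound}, hence \eqref{eq:global-truncation-bound} with order \(\nu\). It also supplies the differentiated bounds \eqref{eq:am-differentiated-consistency-main}--\eqref{eq:am-differentiated-consistency-derivative}, which are \eqref{eq:maximum-truncation-bound}--\eqref{eq:differentiated-truncation-bound} with the relevant norms placed in \(\mathcal C^{\sharp}_{T,\nu+2}(U,F)\). Together with \eqref{eq:strong-residual-bound-space}--\eqref{eq:strong-residual-bound-time}, these bound \(\tau^{1/2}\|R\|_{\ell_\tau^2}\). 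This yields the terms \eqref{eq:abstract-error-estimate-strong-time}--\eqref{eq:abstract-error-estimate-time}, using \(\mathcal C_{T,\nu+1}\lesssim\mathcal C^{\sharp}_{T,\nu+2}\).

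\emph{Step 3: the dual residual norm, which I expect to be the main obstacle.} Proposition \ref{prop:dual-residual-bound} is stated for the abstract norm \(\|\cdot\|_{\mathcal E^*,N}\). Its proof needs only the coercivity properties \eqref{eq:energy-coercivity}--\eqref{eq:dissipation-coercivity} of the energy and dissipation inside the denominator. For AM3 and AM4, the norm \eqref{eq:am34-method-adapted-norm} contains \(\max\|T_\delta V^n\|^2\) and \(\tau\delta\sum\|A_\tau V^n\|^2\), so coercivity holds with \(c_E=1\). For AM5, the denominator in \eqref{eq:am5-method-adapted-norm} contains an extra nonnegative third-increment sum. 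This only enlarges the denominator, so \(\|R\|_{\mathcal E_5^*,N}\le\|R\|_{\mathcal E^*,N}\) with the smaller (AM3/AM4-type) energy, and the bound transfers.

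I will check that the appendix argument uses no property of the multistep average beyond two facts:
\begin{itemize}
\item the expansion \(P_\delta=\mathcal M_\tau+\delta A_\tau+\delta^2GD_\tau^-\), with \(L^2\)-orthogonality of \(\eta\) and its averages to \(\mathcal V_h\);
\item summation by parts \eqref{eq:time-summation-by-parts-endpoints}--\eqref{eq:time-summation-by-parts-bulk}.
\end{itemize}
The stencil length \(\ell=\nu-1\) and the first index \(n_0(\nu)\) affect only the start-up endpoint terms, and these are controlled by \(\mathcal I_\tau(V)\) in the denominator. This gives \eqref{eq:dual-residual-bound-space}--\eqref{eq:dual-residual-bound-time}, with constants depending on the fixed AM coefficients.

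\emph{Step 4: conclusion.} Combining Steps 1--3 bounds \(\|\phi\|_{\mathcal E_\nu,N}\), and this norm dominates \(\max_n\|T_\delta\phi^n\|_\Omega\) and the material-residual sum. Writing \(U_h^n-U(t^n)=\phi^n-\eta^n\) and applying \eqref{eq:projection-graph-norm-bound} gives \eqref{eq:abstract-error-estimate-space}--\eqref{eq:abstract-error-estimate-time} with \(\nu=3,4,5\).

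A last check concerns the regularity hypothesis \(\delta\le1\) in Proposition \ref{prop:dual-residual-bound}. This follows from \(\delta\le C_\delta h\) for \(h\) small; for larger \(h\) the estimate holds by adjusting the constants.
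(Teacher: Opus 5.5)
Your proposal is correct and follows essentially the same route as the paper: you apply Theorem~\ref{thm:adams-moulton-stability} (or Corollary~\ref{cor:adams-piecewise-affine-stability}) to the error equation with \(R=A_{\tau}\eta-\rho\) and vanishing start-up energy, use the AM consistency bounds, and observe that Proposition~\ref{prop:dual-residual-bound} applies to the AM lifted norms because their denominators contain the graph maximum and the material-residual sum. You then conclude by the argument of Theorem~\ref{thm:meta-half}. Your extra checks are harmless refinements of the paper's brief proof: the enlarged AM5 denominator only decreases the dual norm, and \(\delta\le C_{\delta}h\) replaces \(\delta\le1\) up to constants.
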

\begin{proof}
Apply the stability estimate from Theorem
\ref{thm:adams-moulton-stability} or Corollary
\ref{cor:adams-piecewise-affine-stability} to the error equation
\eqref{eq:error-equation} with general forcing
\(R=A_{\tau}\eta-\rho\). Use the residual identity
\eqref{eq:discrete-error-residual-splitting} and
\eqref{eq:adams-truncation-bound}--
\eqref{eq:am-differentiated-consistency-derivative}. Proposition
\ref{prop:dual-residual-bound} applies to the lifted norm
\eqref{eq:adams-lifted-dual-norm}, since its denominator contains the graph
maximum and the material-residual sum used in the proposition. Theorem
\ref{thm:meta-half} then gives
\eqref{eq:abstract-error-estimate-space}--\eqref{eq:abstract-error-estimate-time}
with the AM-specific norm \eqref{eq:am34-method-adapted-norm} or
\eqref{eq:am5-method-adapted-norm}.
\end{proof}
Under the balanced choice \(\delta\sim h\) and \(\tau\lesssim h\), this yields
the concrete rate \(O(h^{k+1/2}+\tau^\nu)\), under the relevant general-degree
stability condition or the piecewise affine relaxation above.

\begin{rem}[Scope of the piecewise affine relaxation]
\label{rem:adams-piecewise-affine-relaxation}
Corollary \ref{cor:adams-piecewise-affine-stability} depends essentially on
affine elements and constant-coefficient first-order operators: it uses that
the differentiated finite element function is elementwise constant.
The abstract inverse inequality alone does not provide this cancellation at
higher polynomial degree, for which the strengthened conditions therefore
remain in the present proof. This distinction is consistent with the
imaginary-axis stability picture for Adams methods \cite{GFR15}; the
strengthened exponents for skew-symmetric discretizations are discussed in
\cite{Der12}. Here the
streamline-diffusion term supplies the additional control needed for
\(\mathbb P_1\) under the standard hyperbolic CFL condition.
\end{rem}


\section{Explicit Schemes: Adams--Bashforth Methods}
\label{sec:ab}
This section analyses AB3 and AB4 directly from their coefficient expansions.
Because the current coefficient is \(b_0=0\), the methods are explicit and
cannot be residual-minimisation normal equations. We first establish
consistency, then derive exact energy decompositions and the increment bounds
needed to control their negative bulk terms. This leads to stability and error
estimates under the standard hyperbolic restriction \(\tau\lesssim h\) and the
upper bound \(0<\delta\lesssim h\). The formulation remains nonsymmetric for
every admissible \(\delta\), since \(b_0=0\).

\paragraph{Coefficients and Increment Expansions.}
The first computed indices are \(n_{\mathrm{AB},0}(3)=3\) and
\(n_{\mathrm{AB},0}(4)=4\). The method
averages in \eqref{eq:method-average} have coefficients
\begin{align}
\mathrm{AB3}:\quad(b_0,b_1,b_2,b_3)
&=\left(0,\frac{23}{12},-\frac{16}{12},\frac5{12}\right) \label{eq:ab3-coefficients}\\
\mathrm{AB4}:\quad(b_0,b_1,b_2,b_3,b_4)
&=\left(0,\frac{55}{24},-\frac{59}{24},
\frac{37}{24},-\frac9{24}\right) \label{eq:ab4-coefficients}
\end{align}
Using the backward increments in
\eqref{eq:first-backward-increment}--\eqref{eq:higher-backward-increments},
exact coefficient expansion gives
\begin{align}
\mathcal M_{\mathrm{AB3}}U^n
&=\widetilde U^n-\frac1{12}d^2U^n-\frac5{12}d^3U^n \label{eq:ab3-increment-average}\\
\mathcal M_{\mathrm{AB4}}U^n
&=\widetilde U^n-\frac1{12}d^2U^n-\frac1{24}d^3U^n
-\frac38d^4U^n \label{eq:ab4-increment-average}
\end{align}
The relatively large coefficient \(-3/8\) of the fourth increment in AB4
generates a negative fourth-increment bulk term, balanced by a positive
third-increment term in the energy decomposition below.


\subsection{Consistency of the Adams--Bashforth Methods}
\label{subsec:adams-bashforth-consistency}
Set \(\sigma(t)=F(t)-GU(t)=\partial_tU(t)\). For AB\(\nu\), let \(\sigma_{\nu-1}\)
interpolate this exact time derivative at the past nodes
\begin{equation}
t^{n-j}\quad 1\le j\le \nu \label{eq:ab-interpolation-nodes}
\end{equation}
Unlike the Adams--Moulton rule, this interpolant uses no value at the current
time. The Adams--Bashforth weights are the integrals of the corresponding
extrapolating Lagrange basis over the current step. Hence,
\begin{equation}
\int_{t^{n-1}}^{t^n}\sigma_{\nu-1}(t)\,dt
=\tau\mathcal M_{\mathrm{AB}\nu}\sigma(t^n) \label{eq:ab-interpolatory-quadrature}
\end{equation}
Subtracting this relation from \eqref{eq:adams-exact-integral} gives
\begin{align}
\rho^n
&=D_{\tau}^{-}U(t^n)+G\mathcal M_{\mathrm{AB}\nu}U(t^n)
-\mathcal M_{\mathrm{AB}\nu}F(t^n) \label{eq:ab-truncation-residual}\\
\|\rho^n\|_{\Omega}
&\le C_\nu\tau^{\nu-1/2}
\|\partial_t^\nu(F-GU)\|_{L^2(t^{n-\nu},t^n;\mathcal H)}
\quad \nu\in\{3,4\} \label{eq:ab-truncation-bound}
\end{align}
Thus AB3 and AB4 have consistency orders three and four, respectively. In
the notation of \eqref{eq:global-truncation-regularity}, take
\begin{equation}
\mathcal C_{n,\nu+1}(U,F)
=C_\nu\|\partial_t^\nu(F-GU)\|_{L^2(t^{n-\nu},t^n;\mathcal H)}
\label{eq:ab-local-truncation-regularity}
\end{equation}
The intervals overlap at most \(\nu+1\) times, so this choice gives
\eqref{eq:global-truncation-bound}. As in the Adams--Moulton analysis, the
dual-residual estimate also needs uniform control of the residual and
square-summable control of its first forward difference. The integral
remainder gives both bounds:
\begin{align}
\max_{\nu\le n\le N}\|\rho^n\|_{\Omega}
+\|D_{\tau}^{+}\rho\|_{\ell_{\tau}^2(\nu,N-1;\mathcal H)}
&\le C_\nu\tau^\nu\Bigl(
\|\partial_t^\nu(F-GU)\|_{L^{\infty}(0,T;\mathcal H)}
\label{eq:ab-differentiated-consistency-main}\\
&\quad+\|\partial_t^{\nu+1}(F-GU)\|_{L^2(0,T;\mathcal H)}\Bigr)
\label{eq:ab-differentiated-consistency-derivative}
\end{align}
Consequently, \eqref{eq:maximum-truncation-bound}--
\eqref{eq:differentiated-truncation-bound} hold for AB3 and AB4 with the
displayed norms included in \(\mathcal C^{\sharp}_{T,\nu+2}(U,F)\).


\subsection{Exact AB3 and AB4 Energy Decompositions}
\label{subsec:adams-bashforth-energy}
Assume the inverse inequality \eqref{eq:adams-inverse-inequality} and
\begin{equation}
0<\delta\le C_{\delta}h,\qquad
\tau\le c_{\mathrm{AB}}C_G^{-1}h \label{eq:ab-standard-cfl}
\end{equation}
where \(c_{\mathrm{AB}}>0\) is sufficiently small. No lower bound proportional
to \(h\) is imposed on \(\delta\). For \(\nu\in\{3,4\}\), define
\begin{equation}
\mathcal E_{\mathrm{AB}\nu,m}
=\sum_{n=\nu}^m(d^1U^n,\mathcal M_{\mathrm{AB}\nu}U^n)_{\delta}
\label{eq:ab-discrete-energy}
\end{equation}
The common tested identity gives
\begin{equation}
\tau\sum_{n=\nu}^m(A_{\tau}U^n,P_{\delta}U^n)_{\Omega}
=\mathcal E_{\mathrm{AB}\nu,m}
+\tau\delta\sum_{n=\nu}^m\|A_{\tau}U^n\|_{\Omega}^2
\label{eq:ab-tested-energy-identity}
\end{equation}

For both methods, the decomposition below pairs a positive bulk term in
\(d^{\nu-1}U\) with a negative bulk term in \(d^{\nu}U\).
The increment estimate in Section \ref{subsec:adams-bashforth-increments}
will show that the positive term absorbs the negative one under
\eqref{eq:ab-standard-cfl}.

\begin{lem}[Exact Adams--Bashforth decompositions]
\label{lem:ab-exact-energy-decompositions}
For every admissible horizon \(m\),
\begin{equation}
\mathcal E_{\mathrm{AB}\nu,m}
=\mathcal B_{\mathrm{AB}\nu,m}+\mathrm{BT}_{\mathrm{AB}\nu,m}
\quad \nu\in\{3,4\} \label{eq:ab-bulk-boundary-decomposition}
\end{equation}
For AB3, the bulk term is
\begin{equation}
\mathcal B_{\mathrm{AB3},m}
=\frac38\sum_{n=3}^{m-1}\|d^2U^n\|_{\delta}^2
-\frac5{24}\sum_{n=3}^{m}\|d^3U^n\|_{\delta}^2
\label{eq:ab3-energy-bulk}
\end{equation}
and the boundary term is
\begin{align}
\mathrm{BT}_{\mathrm{AB3},m}
&=\frac12\bigl(\|U^m\|_{\delta}^2-\|U^2\|_{\delta}^2\bigr)
-\frac1{24}\bigl(\|d^1U^m\|_{\delta}^2
-\|d^1U^2\|_{\delta}^2\bigr) \label{eq:ab3-energy-boundary-main}\\
&\quad-\frac5{12}\bigl((d^1U^m,d^2U^m)_{\delta}
-(d^1U^2,d^2U^2)_{\delta}\bigr)
+\frac16\|d^2U^m\|_{\delta}^2
+\frac5{24}\|d^2U^2\|_{\delta}^2
\label{eq:ab3-energy-boundary-increments}
\end{align}
For AB4, the bulk term is
\begin{equation}
\mathcal B_{\mathrm{AB4},m}
=\frac{13}{24}\sum_{n=4}^{m-1}\|d^3U^n\|_{\delta}^2
-\frac3{16}\sum_{n=4}^{m}\|d^4U^n\|_{\delta}^2
\label{eq:ab4-energy-bulk}
\end{equation}
and the boundary term is
\begin{align}
\mathrm{BT}_{\mathrm{AB4},m}
&=\frac12\bigl(\|U^m\|_{\delta}^2-\|U^3\|_{\delta}^2\bigr)
-\frac1{24}\bigl(\|d^1U^m\|_{\delta}^2
-\|d^1U^3\|_{\delta}^2\bigr) \label{eq:ab4-energy-boundary-main}\\
&\quad-\frac1{24}\bigl((d^1U^m,d^2U^m)_{\delta}
-(d^1U^3,d^2U^3)_{\delta}\bigr) \label{eq:ab4-energy-boundary-second}\\
&\quad-\frac38\bigl((d^1U^m,d^3U^m)_{\delta}
-(d^1U^3,d^3U^3)_{\delta}\bigr)
-\frac1{48}\|d^2U^m\|_{\delta}^2
+\frac1{48}\|d^2U^3\|_{\delta}^2
\label{eq:ab4-energy-boundary-third}\\
&\quad+\frac3{16}\|d^2U^{m-1}\|_{\delta}^2
-\frac3{16}\|d^2U^2\|_{\delta}^2
+\frac16\|d^3U^m\|_{\delta}^2
+\frac38\|d^3U^3\|_{\delta}^2
\label{eq:ab4-energy-boundary-fourth}
\end{align}
\end{lem}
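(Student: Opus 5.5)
The proof follows that of Lemma~\ref{lem:adams-exact-energy-decompositions}: it is a purely algebraic identity in the graph inner product \((\cdot,\cdot)_{\delta}\), and no property of the scheme is used. First I would check the increment expansions \eqref{eq:ab3-increment-average}--\eqref{eq:ab4-increment-average} by writing \(d^2U^n=U^n-2U^{n-1}+U^{n-2}\), \(d^3U^n=U^n-3U^{n-1}+3U^{n-2}-U^{n-3}\), and similarly for \(d^4\), and matching the coefficients against \eqref{eq:ab3-coefficients}--\eqref{eq:ab4-coefficients}. For AB3, the coefficient of \(U^n\) is \(\tfrac12-\tfrac1{12}-\tfrac5{12}=0\), as required. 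Substituting into \eqref{eq:ab-discrete-energy} then splits \(\mathcal E_{\mathrm{AB}\nu,m}\) into four kinds of sum:
\begin{equation*}
\sum_n(d^1U^n,\widetilde U^n)_{\delta},\qquad
\sum_n(d^1U^n,d^2U^n)_{\delta},\qquad
\sum_n(d^1U^n,d^3U^n)_{\delta},\qquad
\sum_n(d^1U^n,d^4U^n)_{\delta},
\end{equation*}
with the lower summation limit \(\nu\) instead of the AM limits.

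The first sum telescopes to \(\tfrac12(\|U^m\|_{\delta}^2-\|U^{\nu-1}\|_{\delta}^2)\). For the second I would use the polarisation identity \((a,a-b)_{\delta}=\tfrac12(\|a\|_{\delta}^2-\|b\|_{\delta}^2+\|a-b\|_{\delta}^2)\) with \(a=d^1U^n\) and \(b=d^1U^{n-1}\), which gives \eqref{eq:du-d2u} started at \(n=\nu\). For the third I would sum by parts. Writing \(d^3U^n=d^2U^n-d^2U^{n-1}\), Abel summation gives the endpoint pairings \((d^1U^m,d^2U^m)_{\delta}-(d^1U^{\nu-1},d^2U^{\nu-1})_{\delta}\) minus \(\sum(d^2U^n,d^2U^n)_{\delta}\) over the shifted range. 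For the fourth I would iterate the same step: first move one difference to obtain \((d^1U^m,d^3U^m)_{\delta}\) minus the boundary pairing, then reduce \(\sum(d^2U^n,d^3U^n)_{\delta}\) by polarisation with \(a=d^2U^n\). This is the analogue of \eqref{eq:sbp4}, shifted to start at \(n=4\). With AB3 weight \(-\tfrac5{12}\) on the third-increment sum, the \(-\tfrac5{12}\cdot\tfrac12\) coefficient becomes the negative bulk \(-\tfrac5{24}\sum\|d^3U^n\|_{\delta}^2\). The \(+\tfrac5{12}\sum\|d^2\|^2\), combined with the \(-\tfrac1{12}\cdot\tfrac12\) from the second-increment sum, gives \(\tfrac38\) on the interior range \(3\le n\le m-1\); the endpoint terms go into \(\mathrm{BT}\). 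AB4 works the same way, with weight \(-\tfrac38\) on \(d^4\). There the \(+\tfrac38\cdot\tfrac32\) interior coefficient combines with \(-\tfrac1{24}\cdot\tfrac12\) to give \(\tfrac{9}{16}-\tfrac1{48}=\tfrac{13}{24}\), and \(-\tfrac38\cdot\tfrac12=-\tfrac3{16}\) gives the \(d^4\) bulk.

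The main obstacle is the bookkeeping of the endpoint ranges, not any estimate. Each summation by parts shifts the index range by one, so the sums \(\sum_{n=\nu}^m\|d^2U^n\|_{\delta}^2\) coming from different terms have different lower and upper limits. Peeling off the first and last summands to bring every bulk sum to the stated ranges, \(n=\nu,\dots,m-1\) for the positive term and \(n=\nu,\dots,m\) for the negative term, produces the isolated endpoint norms. Examples are \(\tfrac16\|d^2U^m\|_{\delta}^2\) and \(\tfrac5{24}\|d^2U^2\|_{\delta}^2\) for AB3, and \(\tfrac3{16}\|d^2U^{m-1}\|_{\delta}^2\), \(\tfrac16\|d^3U^m\|_{\delta}^2\) and \(\tfrac38\|d^3U^3\|_{\delta}^2\) for AB4. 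I would verify each such coefficient by collecting contributions level by level. As a final check I would test the identity on \(U^n=\) a scalar multiple of a fixed vector with \(m=\nu,\nu+1\), which detects any sign or range error.
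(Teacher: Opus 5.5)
Your route is the paper's: substitute \eqref{eq:ab3-increment-average}--\eqref{eq:ab4-increment-average}, apply the summation identities \eqref{eq:du-d2u}--\eqref{eq:adams-sbp-fourth-bulk} with lower index \(\nu\), and collect coefficients. Every coefficient you quote is correct: \(\tfrac38\), \(-\tfrac5{24}\), \(\tfrac{13}{24}\), \(-\tfrac3{16}\), and all the endpoint weights.

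What does not hold up is your own derivation of the two summation-by-parts identities. As written, it cannot produce the negative highest-increment bulk terms, which are the point of the lemma. For the third sum, Abel summation gives
\[
\sum_{n=\nu}^m(d^1U^n,d^3U^n)_{\delta}
=(d^1U^m,d^2U^m)_{\delta}-(d^1U^{\nu-1},d^2U^{\nu-1})_{\delta}
-\sum_{n=\nu}^m(d^2U^n,d^2U^{n-1})_{\delta},
\]
which ends in a sum of shifted cross products, not of squares. You need a second polarisation,
\((d^2U^n,d^2U^{n-1})_{\delta}=\tfrac12(\|d^2U^n\|_{\delta}^2+\|d^2U^{n-1}\|_{\delta}^2-\|d^3U^n\|_{\delta}^2)\).
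This step produces \(+\tfrac12\sum_{n=\nu}^m\|d^3U^n\|_{\delta}^2\), and hence \(-\tfrac5{24}\sum\|d^3U^n\|_{\delta}^2\) for AB3. It also produces the half weights at \(n=\nu-1\) and \(n=m\) that underlie \(\tfrac5{24}\|d^2U^2\|_{\delta}^2\) and \(\tfrac16\|d^2U^m\|_{\delta}^2\). If the remainder were literally \(-\sum\|d^2U^n\|_{\delta}^2\), AB3 would have no negative bulk term.

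The fourth sum has the same defect in a more serious form. After one difference is moved, the remainder is \(\sum_{n=4}^m(d^2U^n,d^3U^{n-1})_{\delta}\), not \(\sum(d^2U^n,d^3U^n)_{\delta}\). Polarising the latter gives only \(\tfrac12(\|d^2U^m\|_{\delta}^2-\|d^2U^3\|_{\delta}^2)+\tfrac12\sum\|d^3U^n\|_{\delta}^2\). That has no \(d^4\) term, interior weight \(\tfrac12\) instead of \(\tfrac32\), and the wrong endpoint levels for \(d^2\).

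The correct step proceeds in three moves:
\begin{enumerate}
\item Split \(d^2U^n=d^2U^{n-1}+d^3U^n\).
\item Polarise \((d^2U^{n-1},d^3U^{n-1})_{\delta}\). This yields \(\tfrac12(\|d^2U^{m-1}\|_{\delta}^2-\|d^2U^2\|_{\delta}^2)+\tfrac12\sum_{n=3}^{m-1}\|d^3U^n\|_{\delta}^2\).
\item Also polarise \((d^3U^n,d^3U^{n-1})_{\delta}=\tfrac12(\|d^3U^n\|_{\delta}^2+\|d^3U^{n-1}\|_{\delta}^2-\|d^4U^n\|_{\delta}^2)\).
\end{enumerate}
The third move alone generates \(-\tfrac12\sum\|d^4U^n\|_{\delta}^2\), and hence the bulk \(-\tfrac3{16}\sum\|d^4U^n\|_{\delta}^2\). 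It also raises the interior \(d^3\) weight from \(\tfrac12\) to \(\tfrac32\) and doubles the coefficient of \(\|d^3U^3\|_{\delta}^2\).

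Your final numbers agree only with these corrected identities, that is, with \eqref{eq:adams-sbp-third-bulk} and \eqref{eq:adams-sbp-fourth-bulk}. Either cite those identities directly, as the paper's proof does, or supply the missing polarisations.
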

\begin{proof}
Insert \eqref{eq:ab3-increment-average} and
\eqref{eq:ab4-increment-average} into \eqref{eq:ab-discrete-energy}. Apply
the summation identities
\eqref{eq:du-d2u}--\eqref{eq:adams-sbp-fourth-bulk} with the AB3 lower index
\(3\) and the AB4 lower index \(4\). For AB3, the interior coefficients are
\begin{align}
\frac5{12}-\frac1{24}&=\frac38 \label{eq:ab3-second-increment-coefficient}\\
-\frac12\frac5{12}&=-\frac5{24} \label{eq:ab3-third-increment-coefficient}
\end{align}
For AB4, the interior second-increment coefficient cancels, while
\begin{align}
\frac1{24}-\frac12\frac1{12}&=0 \label{eq:ab4-second-increment-cancellation}\\
\frac32\frac38-\frac12\frac1{24}&=\frac{13}{24}
\label{eq:ab4-third-increment-coefficient}\\
-\frac12\frac38&=-\frac3{16} \label{eq:ab4-fourth-increment-coefficient}
\end{align}
Collecting the endpoints gives the boundary terms stated above.
\end{proof}


\subsection{Increment and Boundary Control}
\label{subsec:adams-bashforth-increments}

The explicit relation contains no current spatial term. After taking time
differences, the highest increment is therefore controlled directly by the
preceding increment, with the small factor \(\tau/h\).

\begin{lem}[Explicit increment control]
\label{lem:ab-increment-control}
Let \(\nu\in\{3,4\}\), and let \(V_h\) solve the generally forced AB\(\nu\)
equation
\begin{equation}
(A_{\tau}V_h^n,T_{\delta}W)_{\Omega}
=(R^n,T_{\delta}W)_{\Omega}
\quad\forall W\in\mathcal V_h,\qquad n\ge \nu
\label{eq:ab-general-forced-equation}
\end{equation}
Then
\begin{align}
\sum_{n=\nu}^{m}\|d^{\nu}V_h^n\|_{\delta}^2
&\lesssim c_{\mathrm{AB}}^2
\sum_{n=\nu}^{m-1}\|d^{\nu-1}V_h^n\|_{\delta}^2
+\tau^2\sum_{n=\nu}^{m}\|R^n\|_{\Omega}^2
\label{eq:ab-high-increment-control-solution}\\
&\quad+C\mathcal I_{\tau}(V_h)
\label{eq:ab-high-increment-control-startup}
\end{align}
At every horizon \(m\ge \nu\),
\begin{align}
\sum_{j=1}^{\nu-1}\|d^jV_h^m\|_{\delta}^2
+\|d^{\nu-2}V_h^{m-1}\|_{\delta}^2
&\lesssim c_{\mathrm{AB}}^2
\max_{0\le n\le m}\|V_h^n\|_{\delta}^2
+\tau^2\sum_{n=\nu}^{m}\|R^n\|_{\Omega}^2
\label{eq:ab-endpoint-increment-control-solution}\\
&\quad+C\mathcal I_{\tau}(V_h)
\label{eq:ab-endpoint-increment-control-startup}
\end{align}
\end{lem}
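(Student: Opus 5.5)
The plan follows the Adams--Moulton increment recurrence of Lemma~\ref{lem:adams-increment-recurrence}, simplified by \(b_0=0\). Multiplying \eqref{eq:ab-general-forced-equation} by \(\tau\) gives
\begin{equation*}
(d^1V_h^n,T_{\delta}W)_{\Omega}
=-\tau\Bigl(G\sum_{j=1}^{\nu}b_jV_h^{n-j},T_{\delta}W\Bigr)_{\Omega}
+\tau(R^n,T_{\delta}W)_{\Omega}.
\end{equation*}
Applying \(d^{k-1}\) at every level \(n\) for which all \(k\) differenced equations are computed turns this into the same identity with \(d^kV_h^n\) on the left, \(d^{k-1}V_h^{n-j}\) in the history sum, and \(d^{k-1}R^n\) in the forcing. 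No current spatial term appears, so we test with \(W=d^kV_h^n\) and obtain \(\|d^kV_h^n\|_{\delta}^2\) on the left by \eqref{eq:test-operator-norm}. On the right we use Cauchy--Schwarz in the \(T_{\delta}\)-pairing, the inverse inequality \eqref{eq:adams-inverse-inequality}, and the equivalence \eqref{eq:adams-graph-l2-equivalence}, which needs only \(\delta\le C_{\delta}h\). This gives the explicit local recurrence
\begin{equation*}
\|d^kV_h^n\|_{\delta}\lesssim\frac{\tau}{h}\sum_{j=1}^{\nu}\|d^{k-1}V_h^{n-j}\|_{\delta}
+\tau\sum_{q=0}^{k-1}\|R^{n-q}\|_{\Omega},
\end{equation*}
in which every solution term on the right lies strictly in the past. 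Under \eqref{eq:ab-standard-cfl} the factor \(\tau/h\) is at most \(c_{\mathrm{AB}}C_G^{-1}\).

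For the bulk bound \eqref{eq:ab-high-increment-control-solution}, I apply the recurrence once with \(k=\nu\), square, and sum over \(n\) up to \(m\). The indices \(n-j\) with \(1\le j\le\nu\) run only through \(m-1\), and each history level is hit a bounded number of times, so the solution part is at most \(Cc_{\mathrm{AB}}^2\sum_{n\le m-1}\|d^{\nu-1}V_h^n\|_{\delta}^2\). The history terms whose index falls below \(\nu\) involve only differences of start-up data, which are bounded by \(C\mathcal I_{\tau}(V_h)\) through the triangle inequality. The same holds for the first few \(d^\nu V_h^n\) for which not all differenced equations are computed: I estimate these directly from the undifferenced computed updates and their start-up values, placing the solution parts in \(\mathcal I_{\tau}\) and keeping the forcing values. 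Finally, the forcing convolution \(\tau\sum_q\|R^{n-q}\|_{\Omega}\) squares and sums to \(C\tau^2\sum_{n=\nu}^m\|R^n\|_{\Omega}^2\), with the forcing indices restricted to \(n\ge\nu\).

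For the endpoint bound \eqref{eq:ab-endpoint-increment-control-solution}, I iterate the recurrence at the fixed horizon. The case \(k=1\) gives \(\|d^1V_h^m\|_{\delta}\lesssim(\tau/h)\max_{n\le m}\|V_h^n\|_{\delta}+\tau\|R^m\|_{\Omega}\). For \(j\ge2\), the crude bound \(\|d^jV_h^m\|_{\delta}\le\sum_{i}\binom{j-1}{i}\|d^1V_h^{m-i}\|_{\delta}\) reduces everything to first increments at levels \(m-j+1,\dots,m\). Each such increment is either computed, and then bounded as above, or is a start-up increment absorbed into \(\mathcal I_{\tau}\). The term \(\|d^{\nu-2}V_h^{m-1}\|_{\delta}\) is handled the same way using levels up to \(m-1\). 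Squaring and using \((\tau/h)^2\lesssim c_{\mathrm{AB}}^2\) gives the claim, since a single factor \(\tau/h\) already suffices.

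I expect the main obstacle to be the index bookkeeping near the start. One must check that every differenced equation used is actually computed (index at least \(\nu\)), that the solution terms on the right of the bulk bound never reach level \(m\), and that the early increments contribute only start-up solution terms plus forcing at computed levels. Everything else reduces to the one-line recurrence.
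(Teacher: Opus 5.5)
Your proposal is correct and follows the paper's proof. You difference the explicit equation \(\nu-1\) times, test with \(W=d^{\nu}V_h^n\), and use \(b_0=0\) to get a recurrence whose solution terms lie strictly in the past with factor \(\tau/h\lesssim c_{\mathrm{AB}}\); you then square and sum, and absorb the finitely many early indices (including the start-up history term \(d^{\nu-1}V_h^{\nu-1}\)) into \(\mathcal I_{\tau}(V_h)\) while retaining forcing from level \(\nu\) onward. Reducing the endpoint increments to first increments via the binomial expansion is only a cosmetic simplification of the paper's successive recurrence.

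One small correction: testing with \(W=d^kV_h^n\) gives \((d^kV_h^n,T_{\delta}d^kV_h^n)_{\Omega}=\|d^kV_h^n\|_{\Omega}^2\) by skew-symmetry, not \(\|d^kV_h^n\|_{\delta}^2\) via \eqref{eq:test-operator-norm}. It is the equivalence \eqref{eq:adams-graph-l2-equivalence}, which you already invoke, that lets you divide through and return to the graph norm.
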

\begin{proof}
For \(n\ge2\nu-1\), apply \(d^{\nu-1}\) to
\eqref{eq:ab-general-forced-equation}. Since the average has no current
value, the exact differenced equation is
\begin{align}
(d^{\nu}V_h^n,T_{\delta}W)_{\Omega}
&=\tau(d^{\nu-1}R^n,T_{\delta}W)_{\Omega}
\label{eq:ab-differenced-equation-forcing}\\
&\quad-\tau(G\mathcal M_{\mathrm{AB}\nu}d^{\nu-1}V_h^n,
T_{\delta}W)_{\Omega}\quad \forall W\in\mathcal V_h
\label{eq:ab-differenced-equation-solution}
\end{align}
For AB4, the history term thus contains the previous third differences
\(d^3V_h^{n-j}\). Choose \(W=d^{\nu}V_h^n\). Then
skew-symmetry, \eqref{eq:adams-graph-l2-equivalence}, and the fact that
\(b_0=0\) give
\begin{align}
\|d^{\nu}V_h^n\|_{\delta}
&\lesssim\frac{\tau}{h}\sum_{j=1}^{\nu}
\|d^{\nu-1}V_h^{n-j}\|_{\delta} \label{eq:ab-pointwise-increment-solution}\\
&\quad+\tau\sum_{q=0}^{\nu-1}
\|R^{n-q}\|_{\Omega}
\label{eq:ab-pointwise-increment-forcing}
\end{align}
For \(n\ge2\nu-1\), every equation used in the difference is a computed AB
equation. Squaring and summing proves the asserted bound over these indices.
The finitely many indices \(\nu\le n<2\nu-1\) follow by applying the explicit
update successively; their solution terms are controlled by
\(\mathcal I_{\tau}(V_h)\) and their forcing terms by the displayed forcing
sum. The endpoint estimate follows from the same previous-value recurrence
applied successively for orders \(1\) through \(\nu-1\).
\end{proof}

\begin{lem}[AB bulk and boundary lower bounds]
\label{lem:ab-bulk-boundary-lower-bounds}
Let \(V_h\) solve \eqref{eq:ab-general-forced-equation}, and evaluate the
bulk and boundary terms on this sequence. Under
\eqref{eq:ab-standard-cfl},
\begin{align}
\mathcal B_{\mathrm{AB3},m}
&\ge\left(\frac38-Cc_{\mathrm{AB}}^2\right)
\sum_{n=3}^{m-1}\|d^2V_h^n\|_{\delta}^2
-C\tau^2\sum_{n=3}^{m}\|R^n\|_{\Omega}^2
-C\mathcal I_{\tau}(V_h) \label{eq:ab3-bulk-lower-bound}\\
\mathcal B_{\mathrm{AB4},m}
&\ge\left(\frac{13}{24}-Cc_{\mathrm{AB}}^2\right)
\sum_{n=4}^{m-1}\|d^3V_h^n\|_{\delta}^2
-C\tau^2\sum_{n=4}^{m}\|R^n\|_{\Omega}^2
-C\mathcal I_{\tau}(V_h) \label{eq:ab4-bulk-lower-bound}
\end{align}
For \(\nu\in\{3,4\}\) and every truncated horizon \(J\), \(\nu\le J\le N\),
choose \(m_*\in\{\nu,\ldots,J\}\) so that
\(\|V_h^{m_*}\|_{\delta}=\max_{\nu\le n\le J}\|V_h^n\|_{\delta}\). Then
\begin{align}
\mathrm{BT}_{\mathrm{AB}\nu,m_*}
&\ge\left(\frac12-Cc_{\mathrm{AB}}^2\right)
\|V_h^{m_*}\|_{\delta}^2
-C\mathcal I_{\tau}(V_h) \label{eq:ab-boundary-lower-bound-solution}\\
&\quad-C\tau^2\sum_{n=\nu}^{m_*}\|R^n\|_{\Omega}^2
\quad \nu\in\{3,4\} \label{eq:ab-boundary-lower-bound-forcing}
\end{align}
\end{lem}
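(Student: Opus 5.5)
The bulk bounds follow by substituting Lemma \ref{lem:ab-increment-control} into the exact decompositions of Lemma \ref{lem:ab-exact-energy-decompositions}, in the same way as the AM5 case of Lemma \ref{lem:adams-bulk-lower-bounds}. For AB3, the negative term in \eqref{eq:ab3-energy-bulk} is $-\frac5{24}\sum_{n=3}^m\|d^3V_h^n\|_\delta^2$. By \eqref{eq:ab-high-increment-control-solution}--\eqref{eq:ab-high-increment-control-startup} with $\nu=3$, this sum is bounded by $Cc_{\mathrm{AB}}^2\sum_{n=3}^{m-1}\|d^2V_h^n\|_\delta^2$ plus the forcing defect $C\tau^2\sum_{n=3}^m\|R^n\|_\Omega^2$ plus $C\mathcal I_\tau(V_h)$. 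The index range $3,\dots,m-1$ is exactly the range of the positive sum $\frac38\sum_{n=3}^{m-1}\|d^2V_h^n\|_\delta^2$, so combining the two gives \eqref{eq:ab3-bulk-lower-bound}. For AB4, the same substitution with $\nu=4$ controls $-\frac3{16}\sum_{n=4}^m\|d^4V_h^n\|_\delta^2$ by the third-increment sum over $4,\dots,m-1$, and this yields \eqref{eq:ab4-bulk-lower-bound}. In both cases the lemma supplies only an upper bound for the negative sum, which is the direction needed for a lower bound on $\mathcal B$.

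For the boundary term, fix $J$ and the running-maximum index $m_*$, and split $\mathrm{BT}_{\mathrm{AB}\nu,m_*}$ into three groups.

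\emph{Leading term.} The first group is $\frac12\|V_h^{m_*}\|_\delta^2$.

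\emph{Final-level increments.} The second group collects all terms involving the increments $d^jV_h^{m_*}$ for $j\le\nu-1$, together with $d^2V_h^{m_*-1}$, which appears for AB4. For the cross terms, such as $-\frac5{12}(d^1V_h^{m_*},d^2V_h^{m_*})_\delta$ and $-\frac38(d^1V_h^{m_*},d^3V_h^{m_*})_\delta$, use Cauchy--Schwarz followed by Young's inequality to bound each by a sum of squared increment norms. Every squared increment is then estimated by \eqref{eq:ab-endpoint-increment-control-solution}--\eqref{eq:ab-endpoint-increment-control-startup}; the extra term $\|d^{\nu-2}V_h^{m-1}\|_\delta^2$ there covers $d^2V_h^{m_*-1}$ when $\nu=4$. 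This bounds the whole group by
\[
Cc_{\mathrm{AB}}^2\max_{0\le n\le m_*}\|V_h^n\|_\delta^2+C\tau^2\sum_{n=\nu}^{m_*}\|R^n\|_\Omega^2+C\mathcal I_\tau(V_h).
\]
Several of these terms are nonnegative, for example $\frac16\|d^2U^m\|_\delta^2$ and $\frac3{16}\|d^2U^{m-1}\|_\delta^2$. They could simply be dropped, but bounding them in absolute value is harmless.

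\emph{Start-up terms.} The third group contains the terms at levels $2$ and $3$, namely $\|U^{\nu-1}\|_\delta^2$, the increments at level $\nu-1$, and $d^2U^2$. These are values at or near the start-up stencil. They are bounded in absolute value by $C\mathcal I_\tau(V_h)$, using the graph-norm identity \eqref{eq:graph-norm}. Where level $3$ is itself computed, which happens for AB3 at index $3$ inside an increment, the terms are handled through the explicit update as in the early-index part of the proof of Lemma \ref{lem:ab-increment-control}. Any forcing contribution produced in this way is retained in $C\tau^2\sum\|R^n\|_\Omega^2$.

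Finally, split the maximum over $0\le n\le m_*$ into start-up levels, which are bounded by $\mathcal I_\tau(V_h)$, and computed levels $\nu\le n\le m_*\le J$. Since $m_*$ maximises over $\nu\le n\le J$, the computed part equals $\|V_h^{m_*}\|_\delta^2$. This yields \eqref{eq:ab-boundary-lower-bound-solution}--\eqref{eq:ab-boundary-lower-bound-forcing}.

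The main obstacle is the bookkeeping in this boundary step. Every final-level increment must pick up the small factor $c_{\mathrm{AB}}^2$, rather than an $O(1)$ constant multiplying $\|V_h^{m_*}\|_\delta^2$. This holds only because Lemma \ref{lem:ab-increment-control} controls each $d^jV_h^{m_*}$, and also $d^{\nu-2}V_h^{m_*-1}$, through the $\tau/h$ recurrence. Care is also needed at the lowest indices, where differenced equations would reach before the first computed level.
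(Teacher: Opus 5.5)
Your proposal is correct and follows the paper's proof essentially step by step. The bulk bounds come from substituting the explicit increment control into the negative highest-increment sums, and the boundary bound uses Young's inequality on the final cross terms, the endpoint increment bounds, start-up control by \(\mathcal I_{\tau}(V_h)\), and absorption into \(\frac12\|V_h^{m_*}\|_{\delta}^2\) at the running-maximum index; your remark about a computed level \(3\) among the AB3 start-up terms is unnecessary, since every lower-endpoint term in \(\mathrm{BT}_{\mathrm{AB3},m}\) involves only \(U^0\), \(U^1\), \(U^2\) and is therefore bounded by \(C\mathcal I_{\tau}(V_h)\) directly.
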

\begin{proof}
Substitute \eqref{eq:ab-high-increment-control-solution}--\eqref{eq:ab-high-increment-control-startup}
into the negative highest-increment sums in
\eqref{eq:ab3-energy-bulk} and \eqref{eq:ab4-energy-bulk}. This proves the
bulk bounds.

For the boundary terms, apply Young's inequality to every final cross term
in \eqref{eq:ab3-energy-boundary-main}--\eqref{eq:ab4-energy-boundary-fourth}
and use \eqref{eq:ab-endpoint-increment-control-solution}--\eqref{eq:ab-endpoint-increment-control-startup}.
All start-up contributions in the exact formulas, regardless of their sign,
are bounded below by \(-C\mathcal I_{\tau}(V_h)\). Forcing contributions from
the endpoint increments enter the lower bound with the negative sign and the
squared \(\tau^2\) scaling shown in
\eqref{eq:ab-boundary-lower-bound-forcing}. The final increment terms are
absorbed into the leading final energy for \(c_{\mathrm{AB}}\) sufficiently
small.
\end{proof}


\subsection{Stability and Error Estimates}
\label{subsec:adams-bashforth-stability}

\paragraph{AB Method-Adapted Norm.}
The positive bulk term controls \(d^{\nu-1}V\), which must therefore be included
in the method-adapted norm. For \(\nu\in\{3,4\}\), use \(n_0=\nu\) in
\(\mathcal I_{\tau}\), and define
\begin{align}
e_{\tau,\mathrm{AB}\nu}^n(V)
&=\|T_{\delta}V^n\|_{\Omega}^2 \label{eq:ab-method-energy}\\
d_{\tau,\mathrm{AB}\nu}^\nu(V)
&=\delta\|A_{\tau}V^\nu\|_{\Omega}^2 \label{eq:ab-initial-dissipation}\\
d_{\tau,\mathrm{AB}\nu}^n(V)
&=\delta\|A_{\tau}V^n\|_{\Omega}^2
+\tau^{-1}\|d^{\nu-1}V^{n-1}\|_{\delta}^2
\quad n\ge \nu+1 \label{eq:ab-method-dissipation}
\end{align}
The resulting sequence norm is
\begin{align}
\|V\|_{\mathcal E_{\mathrm{AB}\nu},m}^2
&=\max_{\nu\le n\le m}\|T_{\delta}V^n\|_{\Omega}^2
+\tau\delta\sum_{n=\nu}^m\|A_{\tau}V^n\|_{\Omega}^2 \label{eq:ab-method-adapted-norm-main}\\
&\quad+\sum_{n=\nu}^{m-1}\|d^{\nu-1}V^n\|_{\delta}^2
\label{eq:ab-method-adapted-norm-increment}
\end{align}
Define its lifted dual norm by
\begin{equation}
\|R\|_{\mathcal E_{\mathrm{AB}\nu}^*,N}
=\max_{\nu\le m\le N}
\sup_{\substack{V^n=0\text{ for }n\notin\{0,\ldots,m\}\\
\mathcal I_{\tau}(V)+\|V\|_{\mathcal E_{\mathrm{AB}\nu},m}^2>0}}
\frac{\tau\sum_{n=\nu}^m(R^n,P_{\delta}V^n)_{\Omega}}
{\bigl(\mathcal I_{\tau}(V)
+\|V\|_{\mathcal E_{\mathrm{AB}\nu},m}^2\bigr)^{1/2}}
\label{eq:ab-lifted-dual-norm}
\end{equation}
The denominator controls the finite start-up stencil, and the test sequence
is zero-extended after its horizon before \(P_{\delta}\) is applied.

\begin{thm}[Adams--Bashforth stability]
\label{thm:adams-bashforth-stability}
Assume \eqref{eq:adams-inverse-inequality} and
\eqref{eq:ab-standard-cfl}, with \(c_{\mathrm{AB}}\) sufficiently small.
For AB\(\nu\), \(\nu\in\{3,4\}\), let \(V_h\) solve
\eqref{eq:ab-general-forced-equation}. Then
\begin{equation}
\|V_h\|_{\mathcal E_{\mathrm{AB}\nu},N}^2
\lesssim\mathcal I_{\tau}(V_h)
+\|R\|_{\mathcal E_{\mathrm{AB}\nu}^*,N}^2
+\tau\|R\|_{\ell_{\tau}^2(\nu,N;\mathcal H)}^2
\label{eq:ab-stability-estimate}
\end{equation}
\end{thm}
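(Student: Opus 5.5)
The proof mirrors the AM5 branch of Theorem~\ref{thm:adams-moulton-stability}: the positive bulk term absorbs the negative one, so no Gronwall argument is needed. I would use \(S_{\delta}V_h^n\) as the test function in \eqref{eq:ab-general-forced-equation}, which is admissible since \(S_{\delta}V_h^n\in\mathcal V_h\). Multiplying by \(\tau\) and summing from \(\nu\) to an arbitrary horizon \(m\) gives, by \eqref{eq:ab-tested-energy-identity} and Lemma~\ref{lem:ab-exact-energy-decompositions},
\begin{equation*}
\mathcal B_{\mathrm{AB}\nu,m}+\mathrm{BT}_{\mathrm{AB}\nu,m}
+\tau\delta\sum_{n=\nu}^m\|A_{\tau}V_h^n\|_{\Omega}^2
=\tau\sum_{n=\nu}^m(R^n,P_{\delta}V_h^n)_{\Omega}.
\end{equation*}
Lemma~\ref{lem:ab-bulk-boundary-lower-bounds} bounds the bulk term from below. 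After choosing \(c_{\mathrm{AB}}\) so small that \(\frac38-Cc_{\mathrm{AB}}^2\ge\frac3{16}\) and \(\frac{13}{24}-Cc_{\mathrm{AB}}^2\ge\frac{13}{48}\), this lower bound is a positive multiple of \(\sum_{n=\nu}^{m-1}\|d^{\nu-1}V_h^n\|_{\delta}^2\). That sum is exactly the increment component \eqref{eq:ab-method-adapted-norm-increment} of the AB norm, because the shift in \eqref{eq:ab-method-dissipation} was designed for this. The remaining contributions are \(-C\tau^2\sum\|R^n\|^2_{\Omega}-C\mathcal I_{\tau}(V_h)\).

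Next comes the running-maximum step. For each truncated horizon \(J\), I would take \(m=m_*\) from Lemma~\ref{lem:ab-bulk-boundary-lower-bounds}. The boundary bound then gives \(\bigl(\frac12-Cc_{\mathrm{AB}}^2\bigr)\max_{\nu\le n\le J}\|V_h^n\|_{\delta}^2\) on the left. The forcing term is bounded as in \eqref{eq:am-dual-forcing-at-running-maximum} by
\begin{equation*}
\|R\|_{\mathcal E_{\mathrm{AB}\nu}^*,N}\bigl(\mathcal I_{\tau}(V_h)+\|V_h\|_{\mathcal E_{\mathrm{AB}\nu},m_*}^2\bigr)^{1/2}.
\end{equation*}
The defect terms satisfy \(\tau^2\sum_{n}\|R^n\|_{\Omega}^2=\tau\|R\|^2_{\ell_{\tau}^2}\).

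One subtlety must be handled here. The boundary lower bound uses \(\max_{0\le n\le m_*}\|V_h^n\|_{\delta}^2\), which includes start-up levels; those are bounded by \(\mathcal I_{\tau}(V_h)\), using the norm equivalence \eqref{eq:adams-graph-l2-equivalence}. A second issue is that the dual pairing at horizon \(m_*\) involves the partial dissipation and increment sums up to \(m_*\). I would bound these by the full norm at horizon \(N\). I would then repeat the identity once at \(m=N\), where all three left-hand components (final energy, material residual, increment sum) appear with positive coefficients after the lower bounds. The final-energy term is handled by the boundary lemma applied at \(N\), with \(\max_n\|V_h^n\|^2_{\delta}\) already controlled by the previous step.

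Adding the two horizon inequalities, bounding each horizon norm by \(\|V_h\|_{\mathcal E_{\mathrm{AB}\nu},N}\), and applying Young's inequality with a small parameter absorbs the dual contribution. This yields \eqref{eq:ab-stability-estimate}, exactly as in Theorem~\ref{thm:meta-stability}.

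The main obstacle I expect is the bookkeeping at the final horizon. The boundary lemma is stated at the running-maximum index \(m_*\), whereas the dissipation and increment sums must be controlled at \(N\). At \(N\), the endpoint increments in \(\mathrm{BT}_{\mathrm{AB}\nu,N}\) have to be bounded via \eqref{eq:ab-endpoint-increment-control-solution} by \(c_{\mathrm{AB}}^2\max_n\|V_h^n\|_{\delta}^2\). This quantity is not absorbed by \(\|V_h^N\|_{\delta}^2\) but by the already-established running-maximum bound. Ordering these two steps correctly, so that no circular absorption occurs, is where I would be careful. The rest consists of straightforward constant tracking with the smallness of \(c_{\mathrm{AB}}\).
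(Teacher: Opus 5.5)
Your proposal is correct and follows the paper's proof essentially step for step. You test with \(S_{\delta}V_h^n\), insert Lemma~\ref{lem:ab-exact-energy-decompositions}, use the bulk and boundary lower bounds at the running-maximum index \(m_*\) to control the graph-energy maximum and the \(d^{\nu-1}\)-increment sum, absorb the dual forcing by Young's inequality with defects of size \(\tau\|R\|_{\ell_{\tau}^2(\nu,N;\mathcal H)}^2\), and then repeat at the final horizon \(N\) to control the full material-residual sum; your explicit ordering of these two horizon steps, with the endpoint increments at \(N\) bounded through the already controlled running maximum, just spells out what the paper's ``repeating the argument at the final horizon'' leaves implicit.
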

\begin{proof}
The argument follows the same absorption mechanism as for AM5: the positive
lower-increment bulk term controls the negative highest-increment term, so no
discrete Gronwall step is required.
Use \eqref{eq:ab-tested-energy-identity} at an arbitrary horizon and insert
Lemma \ref{lem:ab-exact-energy-decompositions}. For each truncated horizon
\(J\), apply the identity at the running maximum index \(m_*\) from Lemma
\ref{lem:ab-bulk-boundary-lower-bounds}. The lemma then gives positive control
of the running graph-energy maximum and of the \(d^{\nu-1}\)-increment sum when
\(c_{\mathrm{AB}}\) is sufficiently small. The right-hand side is bounded by
\eqref{eq:ab-lifted-dual-norm} and Young's inequality; the start-up factor in
the dual denominator is absorbed by \(\mathcal I_{\tau}(V_h)\). All increment
and boundary forcing defects are bounded by
\begin{equation}
C\tau^2\sum_{n=\nu}^N\|R^n\|_{\Omega}^2
=C\tau\|R\|_{\ell_{\tau}^2(\nu,N;\mathcal H)}^2
\label{eq:ab-forcing-defect-scaling}
\end{equation}
Repeating the argument at the final horizon controls the complete
material-residual sum and
proves \eqref{eq:ab-stability-estimate}.
\end{proof}

For the original inhomogeneous AB\(\nu\) scheme, take
\(R^n=\mathcal M_{\mathrm{AB}\nu}F^n\) in
\eqref{eq:ab-stability-estimate}.

\begin{cor}[Adams--Bashforth error estimate]
\label{cor:adams-bashforth-error}
Assume Theorem \ref{thm:adams-bashforth-stability}, the regularity hypotheses
of Proposition \ref{prop:dual-residual-bound}, and projected start-up data.
Then
\eqref{eq:abstract-error-estimate-space}--\eqref{eq:abstract-error-estimate-time}
holds with \(\nu=3\) for AB3 and \(\nu=4\) for AB4.
\end{cor}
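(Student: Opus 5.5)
The proof follows the same template as Corollary \ref{cor:adams-moulton-error}. We apply Theorem \ref{thm:adams-bashforth-stability} to the discrete error \(\phi^n=U_h^n-\Pi_hU(t^n)\) and then add back the projection error. First, subtracting the projected sampled exact equation \eqref{eq:sampled-exact-equation} from the AB\(\nu\) scheme shows that \(\phi\) solves the generally forced equation \eqref{eq:ab-general-forced-equation}. Its residual is \(R^n=A_{\tau}\eta^n-\rho^n\), as in \eqref{eq:error-equation}--\eqref{eq:discrete-error-residual-splitting}, where \(\rho^n\) is the AB truncation residual \eqref{eq:ab-truncation-residual}. With projected start-up data, \(\phi^j=0\) for \(0\le j<\nu\), so \(\mathcal I_{\tau}(\phi)=0\). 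Theorem \ref{thm:adams-bashforth-stability} then gives
\begin{equation*}
\|\phi\|_{\mathcal E_{\mathrm{AB}\nu},N}^2\lesssim
\|A_{\tau}\eta-\rho\|_{\mathcal E_{\mathrm{AB}\nu}^*,N}^2
+\tau\|A_{\tau}\eta-\rho\|_{\ell_{\tau}^2(\nu,N;\mathcal H)}^2 .
\end{equation*}

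Second, we bound the strong term. The bound \eqref{eq:strong-residual-bound-space}--\eqref{eq:strong-residual-bound-time} applies verbatim, because its derivation uses only the finite stencil and \(n_0\ge\ell\). For AB\(\nu\) we have \(\ell=\nu\) and \(n_0=\nu\), so every sampled time lies in \([0,T]\). The temporal part comes from \eqref{eq:ab-truncation-bound}, \eqref{eq:ab-local-truncation-regularity} and \eqref{eq:global-truncation-bound}. Multiplying by \(\tau^{1/2}\) yields the terms \eqref{eq:abstract-error-estimate-strong-time}--\eqref{eq:abstract-error-estimate-time}.

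Third, and this is the step needing care, we bound the dual term by Proposition \ref{prop:dual-residual-bound}. That proposition uses only three ingredients. The first is the identity \eqref{eq:dual-projection-functional-identity}, which is purely algebraic and holds for any \(b_j\); it remains valid even though \(b_0=0\), since \(P_\delta=\mathcal M_\tau+\delta A_\tau+\delta^2 GD_\tau^-\) still holds. The second is the differentiated consistency bounds, supplied here by \eqref{eq:ab-differentiated-consistency-main}--\eqref{eq:ab-differentiated-consistency-derivative}. The third is that the denominator of the lifted dual norm controls \(\max_n\|T_\delta V^n\|_\Omega^2\), \(\tau\delta\sum_n\|A_\tau V^n\|_\Omega^2\), and the start-up levels. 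The AB norm \eqref{eq:ab-method-adapted-norm-main}--\eqref{eq:ab-method-adapted-norm-increment} contains these quantities plus the nonnegative increment sum. Enlarging the denominator only decreases the supremum, so \(\|\cdot\|_{\mathcal E_{\mathrm{AB}\nu}^*,N}\) is bounded by the corresponding quantity estimated in Appendix \ref{app:dual-residual-proof}. The coercivity constants \eqref{eq:energy-coercivity}--\eqref{eq:dissipation-coercivity} hold with \(c_E=1\) for \eqref{eq:ab-method-energy}--\eqref{eq:ab-method-dissipation}. The main point to verify is that the horizon-uniform summation by parts in the appendix never uses \(b_0\ne0\) and handles the start-up indices \(0,\ldots,\nu-1\) through \(\mathcal I_\tau\) alone. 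I expect this to hold, because the endpoint terms involve only \(V\) and \(\delta GV\) at computed and start-up levels.

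Finally, since \(U_h^n-U(t^n)=\phi^n-\eta^n\), the graph norm of \(\phi^n\) is controlled by the energy part of the sequence norm. The projection contribution is bounded by \eqref{eq:projection-graph-norm-bound}, and the material-residual dissipation bound is read off from the same sequence norm. Collecting the estimates reproduces \eqref{eq:abstract-error-estimate-space}--\eqref{eq:abstract-error-estimate-time} with \(\nu=3,4\); equivalently, the proof is Theorem \ref{thm:meta-half} applied with the AB-specific norm.
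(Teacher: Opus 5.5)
Your proposal is correct and follows essentially the same route as the paper's proof. You apply Theorem~\ref{thm:adams-bashforth-stability} to the error equation with forcing \(R=A_{\tau}\eta-\rho\) and vanishing start-up energy, bound the strong term via the AB truncation estimates, and invoke Proposition~\ref{prop:dual-residual-bound} for the AB lifted dual norm; your check that the AB energy and dissipation satisfy coercivity with \(c_E=1\) is what the paper means by saying that the denominator contains the graph maximum and the material-residual sum. You then conclude as in Theorem~\ref{thm:meta-half}, and the appendix point you flagged does hold: the expansion of \(P_{\delta}\) and the summation by parts never use \(b_0\neq0\), and the only start-up level that appears, \(V^{n_0-1}\), is controlled by \(\mathcal I_{\tau}(V)\).
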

\begin{proof}
Apply Theorem \ref{thm:adams-bashforth-stability} to the error equation with
general forcing
\(R=A_{\tau}\eta-\rho\). Use \eqref{eq:ab-truncation-bound}--
\eqref{eq:ab-differentiated-consistency-derivative}. Proposition
\ref{prop:dual-residual-bound} applies to
\eqref{eq:ab-lifted-dual-norm}, whose denominator contains the graph maximum
and material-residual sum required by the proposition. Theorem
\ref{thm:meta-half} then gives the asserted estimate.
\end{proof}
Under the balanced choice \(\delta\sim h\) and \(\tau\lesssim h\), this yields
the concrete rate \(O(h^{k+1/2}+\tau^\nu)\), with \(\nu=3\) for AB3 and \(\nu=4\)
for AB4.

\begin{rem}[The AB2 scheme]
\label{rem:adams-bashforth-two}
The second-order Adams--Bashforth scheme (AB2) can be analysed by arguments
similar to those used above. For finite element spaces of arbitrary polynomial
degree, they yield stability under the \(4/3\)-CFL condition
\(\tau\lesssim h^{4/3}\). For continuous piecewise affine approximation, the
elementwise cancellation used in Corollary
\ref{cor:adams-piecewise-affine-stability} instead yields stability under the
standard hyperbolic CFL condition \(\tau\lesssim h\). In the latter case, the
second-order discretisation errors in space and time are balanced. For an
analysis of AB2 with continuous interior penalty stabilisation, see
\cite{BG22}.
\end{rem}


\section{Numerical Experiments}
\label{sec:numerical-experiments}
The experiments address three questions. First, do the time integrators attain
their formal orders when spatial error is removed? Second, are the predicted
fully discrete rates observed when temporal and spatial orders are matched?
Third, does stabilisation confine errors generated by discontinuities and
compactly supported waves? We use the method coefficients and parameter
conventions of Sections \ref{sec:theta-method}, \ref{sec:adams-moulton}, and
\ref{sec:ab}. Rates computed from successive refinements are empirical; they
neither determine universal stability thresholds nor establish a general
localisation theorem.


\subsection{Implementation and Test Problems}
\label{subsec:numerical-implementation}
We use the acoustic system \eqref{eq:acoustic-pressure}--
\eqref{eq:acoustic-spatial-operator} with unit wave speed. Thus the scaled
and physical velocities agree, and \(U=(p,u)\). The one-dimensional domain is
\((0,1)\), and the two-dimensional domain is \((0,1)^2\), in both cases with
periodic boundary conditions. All components are approximated in the same
continuous finite element space: periodic \(\mathbb P_1\) through \(\mathbb P_4\) spaces in one
dimension and periodic triangular \(\mathbb P_1\) spaces in two dimensions.

The assembly uses consistent mass matrices and Gaussian quadrature of
sufficient order. Exact finite element histories are used in the purely
temporal test, while the smooth PDE tests use \(L^2\)-projected exact
histories. The discontinuous one-dimensional data are also \(L^2\)-projected,
with the jumps aligned to element interfaces on every mesh. The compact
initial pressure in the two-dimensional localisation test is instead
interpolated at the nodes so that its nodal support is preserved. For each fixed run, one
direct sparse factorisation is reused at every time step. There is no mass
lumping, filtering, limiter, added viscosity, or retuning along a refinement
path.

For every production run, a nominal step \(\tau_*\) is prescribed and the
actual step is chosen as \(\tau=T/N\leq\tau_*\), with integer \(N\), so that
the final time is an exact grid level.

Let \(e^n=U_h^n-U(t^n)\). The three error quantities reported below are,
respectively, the final \(L^2\) error, the maximum graph error, and the
accumulated material-residual error,
\begin{align}
E_{L^2}&=\|e^N\|_{\Omega} \label{eq:numerical-final-l2-error}\\
E_{G}&=\max_{0\le n\le N}
\bigl(\|e^n\|_{\Omega}^2+\delta^2\|Ge^n\|_{\Omega}^2\bigr)^{1/2}
\label{eq:numerical-maximum-graph-error}\\
E_{R}&=\left(\tau\delta\sum_{n=n_0}^{N}
\|D_{\tau}^{-}e^n+G\mathcal M_{\tau}e^n\|_{\Omega}^2\right)^{1/2}
\label{eq:numerical-material-residual-error}
\end{align}
Here \(n_0\) is the first computed time level and \(\mathcal M_{\tau}\) is the
average associated with the active method. These definitions use the same
graph and material-residual scaling as the energy analysis. Matrix assembly
reproduced both the discrete skew-symmetric structure and, when
\(\delta=b_0\tau\), the normal-equation cancellation to
floating-point precision. Pilot calculations were used only to select
conservative production constants. Unstable pilot points were kept in
the reproducibility record, and the scans are not interpreted as sharp or
mesh-independent CFL bounds.


\subsection{Temporal Accuracy}
\label{subsec:numerical-temporal-accuracy}
To remove spatial approximation error, we work on a periodic \(\mathbb P_1\) mesh with
three elements and prescribe the nontrivial finite element vector \(W_h\) by
its nodal values
\begin{equation}
W_h(x_i)=
\begin{pmatrix}
\sin(2\pi x_i)+0.3\cos(2\pi x_i)\\
0.7\cos(2\pi x_i)-0.2\sin(2\pi x_i)
\end{pmatrix}
\label{eq:numerical-temporal-mode}
\end{equation}
The discrete manufactured data are
\begin{align}
U_h(t)&=\exp(t)W_h \label{eq:numerical-temporal-solution}\\
F_h(t)&=\partial_tU_h(t)+GU_h(t) \label{eq:numerical-temporal-forcing}
\end{align}
up to \(T=1\), with \(N=10,20,40,80,160\), and \(320\) time steps. For the
\(\theta\)-methods, \(\delta=\theta\tau\); for all Adams methods, \(\delta=h\).
Table~\ref{tab:numerical-temporal-orders} reports the mean of the three finest
pairwise rates unaffected by roundoff. Every method attains its formal order.
For all methods except AM5, these are the final three rates. AM5 reaches a
floating-point error floor on the last refinement, so that level remains in
the reproducibility record while its rate is excluded from the mean.

\begin{table}[htbp]
\centering
\begin{tabular}{lcc}
\toprule
Method & Formal Order & Observed Order \\
\midrule
\(\theta\)-method, \(\theta=1\) & 1 & 1.0034 \\
\(\theta\)-method, \(\theta=1/2\) & 2 & 1.9986 \\
AM3 & 3 & 2.9959 \\
AM4 & 4 & 3.9906 \\
AM5 & 5 & 4.9129 \\
AB3 & 3 & 2.9861 \\
AB4 & 4 & 3.9794 \\
\bottomrule
\end{tabular}
\caption{Formal temporal orders and mean observed rates over the three finest
time-step refinements unaffected by roundoff.}
\label{tab:numerical-temporal-orders}
\end{table}


\subsection{Order-Matched One-Dimensional Convergence}
\label{subsec:numerical-one-dimensional-convergence}
The fully discrete convergence test is the homogeneous periodic right-moving
wave
\begin{equation}
p(x,t)=u(x,t)=\sin\bigl(2\pi(x-t)\bigr),\qquad T=1
\label{eq:numerical-one-dimensional-wave}
\end{equation}
Under a hyperbolic scaling \(\tau\sim h\), a method of temporal order \(\nu\)
has temporal error \(O(h^\nu)\). We therefore pair it with continuous
\(\mathbb P_{\nu-1}\) elements, whose formal \(L^2\)-approximation order is
also \(\nu\),
so that spatial error does not mask the benefit of high-order time
integration. The four
method--space pairs are Crank--Nicolson/\(\mathbb P_1\), AM3/\(\mathbb P_2\),
AM4/\(\mathbb P_3\), and AM5/\(\mathbb P_4\).

The meshes have \(10,20,40,80\), and \(160\) elements. Every run uses the
nominal parameters
\begin{equation}
\tau_*=0.1h,\qquad \delta=b_0\tau
\label{eq:numerical-matched-order-parameters}
\end{equation}
Consequently, all four methods use the normal-equation formulation and a
Cholesky factorisation. The start-up histories are \(L^2\)-projections of
the exact solution. For AM3/\(\mathbb P_2\) and AM4/\(\mathbb P_3\), the standard
hyperbolic scaling in \eqref{eq:numerical-matched-order-parameters} is less
restrictive than the sufficient general-degree CFL conditions in Section
\ref{sec:adams-moulton}. Their stable behaviour here is therefore numerical
evidence, not an extension of the theorem.

Table~\ref{tab:numerical-one-dimensional-rates} shows the rates on the finest
mesh pair. The final \(L^2\) and maximum graph errors recover orders
\(2,3,4\), and \(5\), respectively. The accumulated material-residual rates
recover the corresponding half-order sequence
\(3/2,5/2,7/2\), and \(9/2\). This experiment therefore exhibits the
fully discrete high-order behaviour in a PDE problem, rather than removing
the spatial error as in the manufactured temporal test. Fixed-\(\mathbb P_1\)
comparisons were retained in the reproducibility record as spatial controls,
but are not used to assess the advantage of the higher-order time methods.

\begin{table}[htbp]
\centering
\begin{tabular}{lcccc}
\toprule
Method & Space & Final \(L^2\) & Max. Graph & Material Residual \\
\midrule
Crank--Nicolson & \(\mathbb P_1\) & 2.0032 & 2.0028 & 1.5010 \\
AM3 & \(\mathbb P_2\) & 2.9815 & 2.9816 & 2.4791 \\
AM4 & \(\mathbb P_3\) & 4.0008 & 4.0010 & 3.4994 \\
AM5 & \(\mathbb P_4\) & 4.9984 & 4.9965 & 4.4968 \\
\bottomrule
\end{tabular}
\caption{Observed finest-pair rates, from \(80\) to \(160\) elements, for
the order-matched one-dimensional travelling-wave experiment.}
\label{tab:numerical-one-dimensional-rates}
\end{table}


\subsection{Two-Dimensional Convergence}
\label{subsec:numerical-two-dimensional-convergence}
This test keeps the spatial degree fixed at \(\mathbb P_1\). It is therefore not
designed to distinguish the formal orders of the time integrators; its purpose
is to test the graph and material-residual rates in two dimensions and to
compare implicit and explicit schemes on the same spatial family. The exact
solution is the oblique periodic plane wave
\begin{align}
\boldsymbol{\kappa}&=(1,1) \label{eq:numerical-plane-wave-vector}\\
p(x,y,t)&=\sin\bigl(2\pi(x+y-\sqrt{2}t)\bigr)
\label{eq:numerical-plane-wave-pressure}\\
u(x,y,t)&=2^{-1/2}(1,1)p(x,y,t),\qquad T=2^{-1/2}
\label{eq:numerical-plane-wave-velocity}
\end{align}
which exercises both velocity components. Crank--Nicolson, AM3, AM5, and
AB4 are run on slash-diagonal triangular meshes with \(8,16,32\), and \(64\)
subdivisions in each coordinate direction. The nominal refinement paths are
\begin{center}
\begin{tabular}{lcc}
\toprule
Method & Nominal step \(\tau_*\) & Stabilisation \(\delta\) \\
\midrule
Crank--Nicolson & \(0.25h\) & \(\tau/2\) \\
AM3 & \(0.1h^{4/3}\) & \(h\) \\
AM5 & \(0.02h\) & \(h\) \\
AB4 & \(0.003h\) & \(h\) \\
\bottomrule
\end{tabular}
\end{center}

The finest-pair rates in Table~\ref{tab:numerical-two-dimensional-rates} and
the curves in Figure~\ref{fig:numerical-two-dimensional-convergence} show
graph rates near two and material-residual rates near \(3/2\). In the figure,
the AM3, AM5, and AB4 curves are summarised by their pointwise median and
range because the individual curves are indistinguishable at the plotted
scale. This reflects the dominance of the \(\mathbb P_1\) spatial error once the
temporal error is small. These are the principal comparisons with the error
analysis. The multistep final \(L^2\) rates near \(2.87\) are an observed
structured-mesh superconvergence effect and are not claimed as a general
\(\mathbb P_1\) rate.

\begin{table}[htbp]
\centering
\begin{tabular}{lccc}
\toprule
Method & Final \(L^2\) & Maximum Graph & Material Residual \\
\midrule
CN & 2.1433 & 2.0950 & 1.5045 \\
AM3 & 2.8689 & 2.1685 & 1.4929 \\
AM5 & 2.8681 & 2.1680 & 1.4925 \\
AB4 & 2.8704 & 2.1691 & 1.4936 \\
\bottomrule
\end{tabular}
\caption{Observed finest-pair rates for the two-dimensional oblique plane
wave on the slash-diagonal production meshes.}
\label{tab:numerical-two-dimensional-rates}
\end{table}

\begin{figure}[htbp]
\centering
\includegraphics[width=\textwidth]{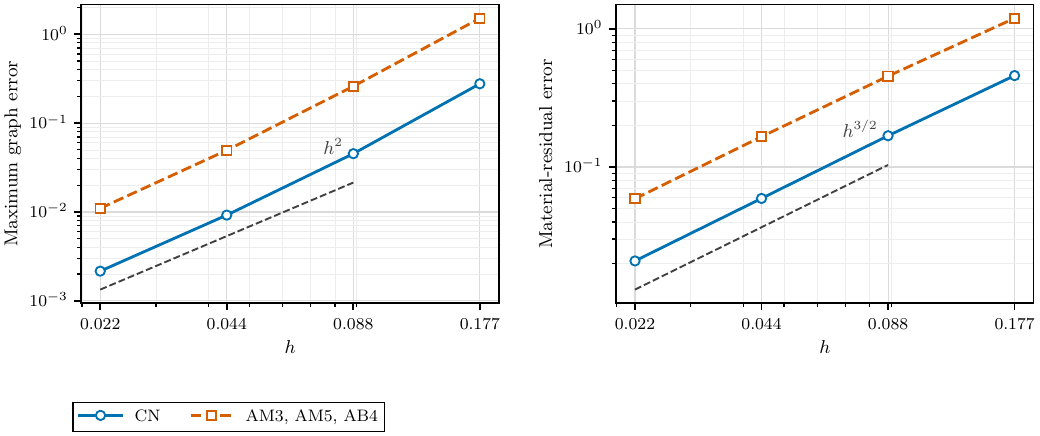}
\caption{Observed maximum graph errors and accumulated material-residual
errors for the two-dimensional oblique plane wave. The orange curve is the
pointwise median of the AM3, AM5, and AB4 errors, and the shaded band spans
their full range; Table~\ref{tab:numerical-two-dimensional-rates} gives the
individual finest-pair rates. Reference slopes indicate \(h^2\) and
\(h^{3/2}\), respectively.}
\label{fig:numerical-two-dimensional-convergence}
\end{figure}

The mesh-orientation check used slash, backslash, and alternating diagonals.
Across these orientations, the CN graph rates range from \(2.015\) to
\(2.095\), and its material-residual rates range from \(1.502\) to
\(1.506\). The AM5 graph rates range from \(2.056\) to \(2.168\), and its
material-residual rates range from \(1.493\) to \(1.498\). Error levels are
orientation-sensitive: the largest ratios between orientations are
approximately \(2.44\) for the graph error and \(3.48\) for the AM5 final
\(L^2\) error. The graph and material-residual rate trends nevertheless remain
robust across the tested orientations.


\subsection{One-Dimensional Localisation of Discontinuities}
\label{subsec:numerical-one-dimensional-localisation}
The preceding smooth tests do not address how discretisation error generated
near a discontinuity propagates through the domain. We therefore ask whether optimal
\(\mathbb P_1\) convergence is recovered in regions that remain a fixed physical
distance from the moving jumps. We prescribe zero initial velocity and the
periodic pressure profile
\begin{equation}
f(x)=
\begin{cases}
1+\dfrac14\sin\bigl(2\pi(x-x_L)/(x_R-x_L)\bigr),&x_L\le x<x_R\\
0,&\text{otherwise}
\end{cases}
\quad x_L=0.25,\quad x_R=0.45 \label{eq:numerical-discontinuous-profile}
\end{equation}
The profile is smooth and nonconstant in the interior of its support but has a
unit jump at each endpoint. For the one-dimensional acoustic system,
d'Alembert's formula gives the exact solution,
\begin{align}
p(x,t)&=\frac12\bigl(f(x-t)+f(x+t)\bigr)
\label{eq:numerical-dalembert-pressure}\\
u(x,t)&=\frac12\bigl(f(x-t)-f(x+t)\bigr)
\label{eq:numerical-dalembert-velocity}
\end{align}
Thus the initial pulse splits into left- and right-moving waves with a known
set of moving discontinuities,
\begin{equation}
\mathcal J(t)=\{x_L+t,x_R+t,x_L-t,x_R-t\}\bmod 1
\label{eq:numerical-moving-jump-set}
\end{equation}

We use periodic \(\mathbb P_1\) elements on meshes with \(40\) through \(2560\)
elements and Crank--Nicolson up to \(T=0.15\), with nominal step
\(\tau_*=0.2h\). The two calculations differ only in the stabilisation
parameter: \(\delta=0\) for the standard Galerkin method and
\(\delta=\tau/2\) for the normal-equation SUPG method. The endpoints \(x_L\)
and \(x_R\) are element interfaces on every mesh, and the initial pressure is
the \(L^2\)-projection of \eqref{eq:numerical-discontinuous-profile}.

For the fixed physical buffer width \(\rho_{\mathrm{buf}}=0.04\), define the
smooth region by
\begin{equation}
\Omega_{\mathrm{sm}}(t)
=\bigl\{x\in(0,1):\operatorname{dist}_{\mathrm{per}}
(x,\mathcal J(t))>\rho_{\mathrm{buf}}\bigr\} \label{eq:numerical-smooth-region}
\end{equation}
and measure the maximum local error,
\begin{equation}
E_{\mathrm{loc}}
=\max_{0\le n\le N}\|e^n\|_{\Omega_{\mathrm{sm}}(t^n)}
\label{eq:numerical-local-error}
\end{equation}
The same moving region is used at every refinement level, rather than a
mesh-dependent exclusion zone.

Table~\ref{tab:numerical-discontinuous-localisation} shows the finest-pair
results. The unstabilised local error converges at rate \(0.4995\), whereas
normal-equation SUPG recovers the optimal \(\mathbb P_1\) local rate \(2.0006\).
On the finest mesh their maximum local errors are \(5.6971\times10^{-3}\)
and \(3.9288\times10^{-7}\), respectively, a factor of approximately
\(1.45\times10^4\). In the part of the smooth region where neither exact
travelling pulse is present, the corresponding maximum-in-time \(L^2\) norms
of the numerical state are
\(2.9652\times10^{-3}\) and \(4.5629\times10^{-15}\). The full-domain errors
converge much more slowly because they include the moving discontinuities.
Their observed rates are reported without interpreting them as universal
asymptotic exponents.

\begin{table}[htbp]
\centering
\begin{tabular}{lcccc}
\toprule
Method & Global Error & Global Rate & Max. Local Error & Local Rate \\
\midrule
Unstabilised & \(2.5506\times10^{-2}\) & 0.3496 &
\(5.6971\times10^{-3}\) & 0.4995 \\
Normal SUPG & \(1.9694\times10^{-2}\) & 0.3750 &
\(3.9288\times10^{-7}\) & 2.0006 \\
\bottomrule
\end{tabular}
\caption{Errors on the \(2560\)-element mesh and rates from \(1280\) to
\(2560\) elements for the discontinuous d'Alembert experiment. The local
error is defined by \eqref{eq:numerical-local-error}.}
\label{tab:numerical-discontinuous-localisation}
\end{table}

The convergence curves in
Figure~\ref{fig:numerical-discontinuous-convergence} show that the stabilised
calculation passes through a preasymptotic layer-decay regime before settling
onto the \(h^2\) reference line. The standard Galerkin local error instead
follows the \(h^{1/2}\) line on the finest levels. The profiles in
Figure~\ref{fig:numerical-discontinuous-profiles} explain the contrast: the
unstabilised dispersive error extends across the periodic interval, whereas
the SUPG error remains concentrated around the exact wave fronts. This is
numerical evidence for localisation in this linear one-dimensional
configuration, not a general theorem for nonsmooth solutions.

\begin{figure}[htbp]
\centering
\includegraphics[width=\textwidth]{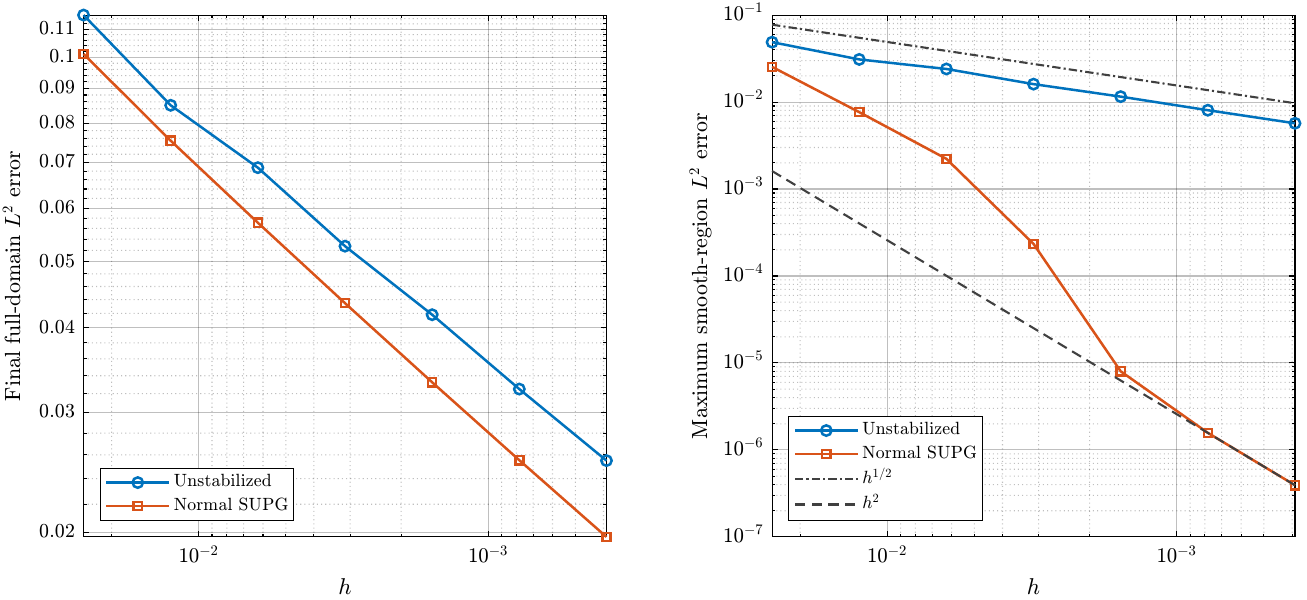}
\caption{Final full-domain errors and maximum smooth-region errors for the
discontinuous d'Alembert experiment. The fixed smooth region is given by
\eqref{eq:numerical-smooth-region}.}
\label{fig:numerical-discontinuous-convergence}
\end{figure}

\begin{figure}[htbp]
\centering
\includegraphics[width=\textwidth]{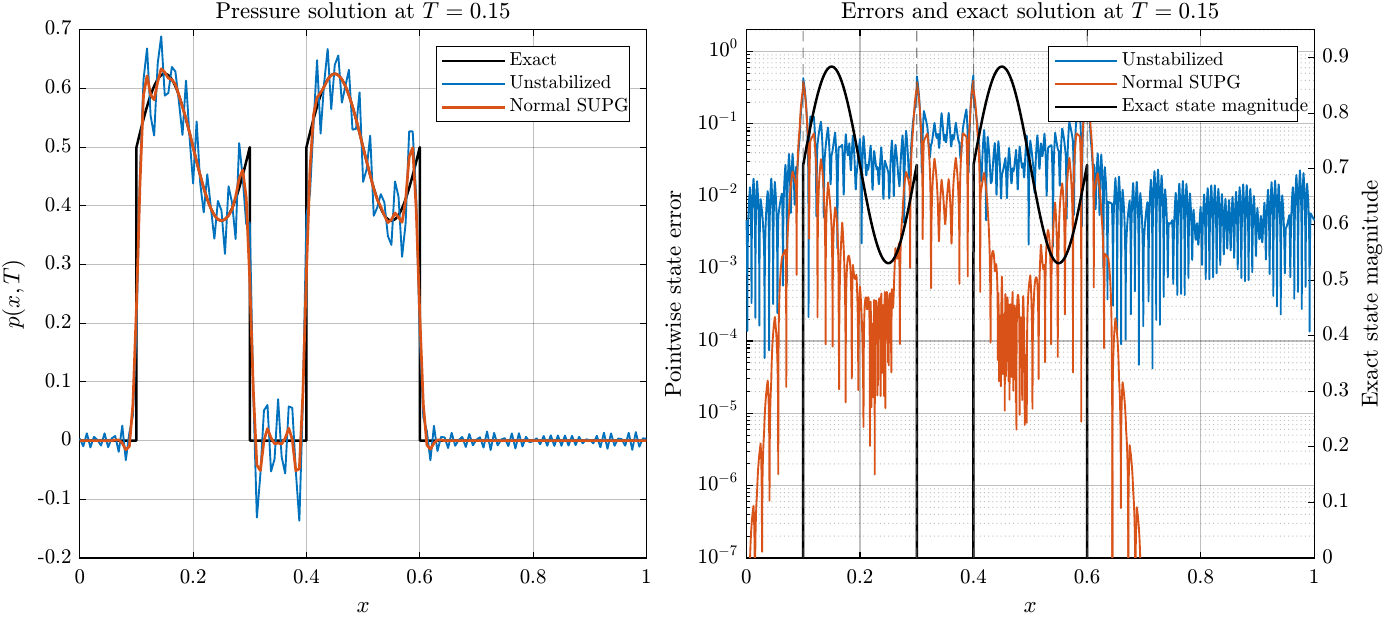}
\caption{Final pressure and pointwise state error on the \(160\)-element
mesh. In the right panel, the pointwise state errors are plotted against the
logarithmic left-hand axis, while the exact state magnitude is shown in black
against the right-hand axis. The unstabilised error is dispersive and global,
while normal-equation SUPG confines the significant error to neighbourhoods
of the exact jumps.}
\label{fig:numerical-discontinuous-profiles}
\end{figure}


\clearpage
\subsection{Two-Dimensional Localisation of a Compact Wave}
\label{subsec:numerical-localized-wave}
Finally, we ask whether stabilisation also suppresses numerical energy outside
the physical wave cone for smooth compact data in two dimensions. Let
\(x_0=(0.5,0.5)\), \(R_0=0.12\), and
\(r_{\mathrm{per}}(x)=\operatorname{dist}_{\mathrm{per}}(x,x_0)\). The
initial data are
\begin{equation}
p(x,0)=
\begin{cases}
\displaystyle \exp\!\left(1-\frac{1}{1-(r_{\mathrm{per}}(x)/R_0)^2}\right),
&r_{\mathrm{per}}(x)<R_0,\\
0,&r_{\mathrm{per}}(x)\geq R_0,
\end{cases}
\qquad u(x,0)=0
\label{eq:numerical-localized-initial-data}
\end{equation}
Thus the pressure is smooth, compactly supported, and equal to one at its
centre. We use \(64\) subdivisions in each coordinate direction and
Crank--Nicolson up to \(T=0.25\), with nominal step \(\tau_*=0.1h\). The two
runs use the same time grid and differ only in the stabilisation parameter:
\(\delta=0\) and the normal-equation value \(\delta=\tau/2\). With
\(h=\sqrt{2}/64\), define the exterior region by
\begin{equation}
\Omega_{\mathrm{ext}}(t)
=\bigl\{x\in\Omega:\operatorname{dist}_{\mathrm{per}}(x,x_0)
>R_0+t+2h\bigr\} \label{eq:numerical-localized-exterior-region}
\end{equation}
so the buffer outside the exact wave cone has width \(2h\). At final time the
cone-plus-buffer radius is \(0.414194<0.5\), and hence it has not reached a
periodic image.

For this experiment, let
\begin{align}
E(t)&=\frac12\int_{\Omega}\bigl(p_h^2+|u_h|^2\bigr)\,dx
\label{eq:numerical-localized-total-energy}\\
E_{\mathrm{ext}}(t)&=\frac12\int_{\Omega_{\mathrm{ext}}(t)}
\bigl(p_h^2+|u_h|^2\bigr)\,dx
\label{eq:numerical-localized-exterior-energy}
\end{align}

Figures~\ref{fig:numerical-localized-snapshots} and
\ref{fig:numerical-localized-energy} show the pressure fields on a common
colour scale and the corresponding energy histories. The unstabilised final
ratio \(E(T)/E(0)\) is \(1.0000\), whereas the normal-equation SUPG ratio is
\(0.993225\). Their final exterior-energy fractions
\(E_{\mathrm{ext}}(T)/E(0)\) are, respectively,
\(4.7772\times10^{-6}\) and \(1.2126\times10^{-7}\), so the stabilised
fraction is \(39.4\) times smaller. At \(T\), the full-state \(L^2\) difference
between the two solutions is \(3.1115\times10^{-3}\). Thus normal-equation
stabilisation improves localisation in this configuration, with modest
dissipation; no general multidimensional result is claimed.

\begin{figure}[htbp]
\centering
\includegraphics[width=\textwidth]{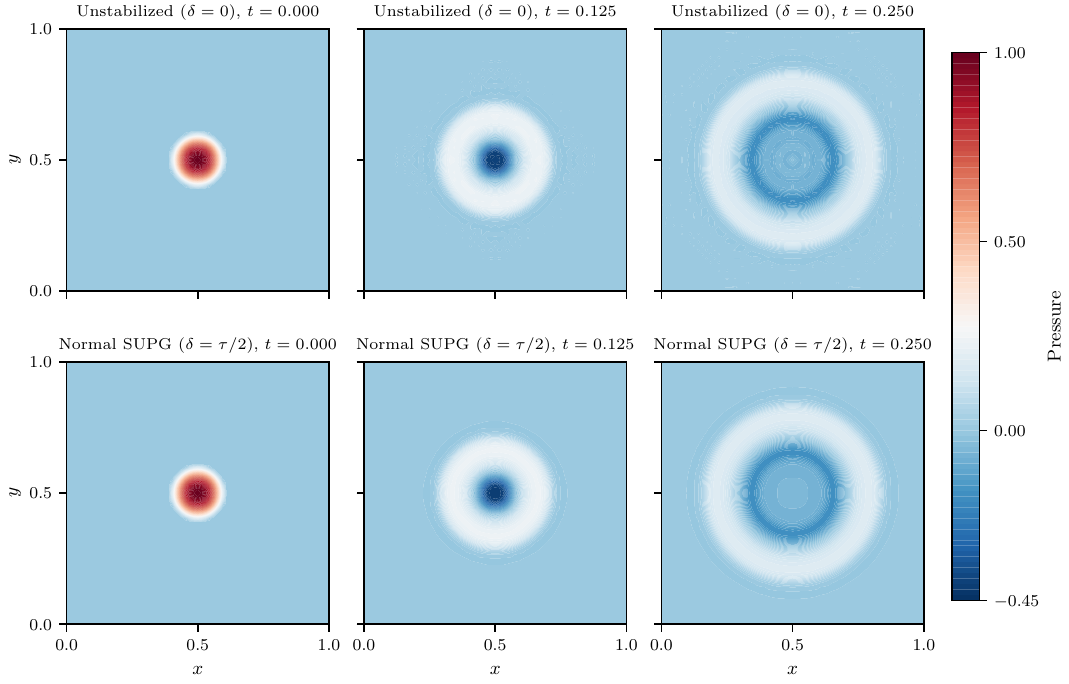}
\caption{Crank--Nicolson pressure on the periodic \(64\times64\) triangular
grid at \(t=0\), \(t=0.125\), and \(t=0.25\). The top row uses
\(\delta=0\), the bottom row uses the normal-equation value
\(\delta=\tau/2\), and all panels use the same colour scale.}
\label{fig:numerical-localized-snapshots}
\end{figure}

\begin{figure}[htbp]
\centering
\includegraphics[width=0.92\textwidth]{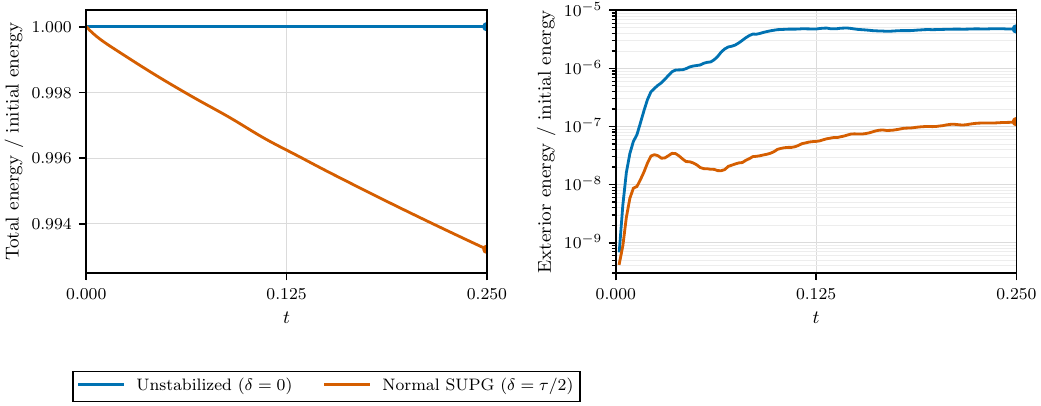}
\caption{Crank--Nicolson total-energy ratio and exterior-energy fraction for
\(\delta=0\) and \(\delta=\tau/2\) on the periodic \(64\times64\)
triangular grid over \(0\le t\le0.25\). Both quantities are normalised by
the initial total energy.}
\label{fig:numerical-localized-energy}
\end{figure}

\clearpage


\section{Conclusions}
\label{sec:conclusions}
The common operator and energy framework developed here leads to five
principal conclusions.

\begin{enumerate}[label=\arabic*.,leftmargin=*,itemsep=0.65em,topsep=0.5em]
\item \textbf{A common formulation clarifies the algebra.}
Separating the method average, the backward time difference, the material
residual, and the perturbed test operator places all the time integrators
considered here within the same structure. For implicit methods, the
distinguished choice \(\delta=b_0\tau\) turns each step into a symmetric
positive definite normal equation associated with residual minimisation in
the graph norm. A general SUPG choice remains admissible, but the resulting
system is not symmetric in general. For the explicit Adams--Bashforth
methods, \(b_0=0\), so no positive \(\delta\) can yield this normal-equation
structure.

\item \textbf{Stability depends on the method and the discrete space.}
The framework establishes stability for the \(\theta\)-method and AM3--AM5;
separate exact energy decompositions yield stability for AB3 and AB4. For
general polynomial degree, AM3 and AM4 require the strengthened time-step
restrictions in Section \ref{sec:adams-moulton}. For continuous piecewise
affine elements, cancellation due to the elementwise constancy of the
gradient restores the standard hyperbolic regime
\(\tau\lesssim\delta\lesssim h\). AM5, AB3, and AB4 are stable under standard
hyperbolic CFL restrictions. Since the general-degree Adams--Moulton analysis
imposes no lower bound on \(\delta\), it includes the normal-equation choice
whenever the relevant time-step restriction is satisfied.

\item \textbf{The dual residual yields a sharp fully discrete estimate.}
A direct dual-residual argument gives the bound
\(O(h^{k+1/2}+\tau^\nu)\) for \(\delta\sim h\) and \(\tau\lesssim h\), subject
to the stated stability and regularity assumptions. It uses the ordinary
\(L^2\)-projection, skew-symmetry, and discrete summation by parts.

\item \textbf{Order matching reveals the high-order regime.}
The standalone temporal tests confirm the formal order of each scheme. In the
principal one-dimensional acoustic experiment, pairing a method of temporal
order \(\nu\) with \(\mathbb P_{\nu-1}\) elements, \(\tau=0.1h\), and
\(\delta=b_0\tau\) produces convergence of orders two through five in the
final-time and graph errors. The material-residual errors follow the
half-order sequence \(3/2,5/2,7/2\), and \(9/2\). The two-dimensional
\(\mathbb P_1\) tests give graph and material-residual rates close to two and
\(3/2\), respectively.

\item \textbf{Stabilisation substantially improves localisation.}
In the discontinuous d'Alembert experiment, the unstabilised \(\mathbb P_1\) solution
converges locally at rate \(0.4995\), whereas normal-equation SUPG recovers the
optimal local rate \(2.0006\) away from the moving jumps. On the finest mesh,
the maximum local error of the stabilised method is smaller by a factor of
approximately \(1.45\times10^4\). In the smooth two-dimensional compact-wave
test, the stabilised solution has a final exterior-energy fraction that is
\(39.4\) times smaller, with only modest dissipation. Together, these
experiments provide evidence for the proposed localisation mechanism, but not
a general theorem for nonsmooth solutions.
\end{enumerate}

Natural theoretical extensions include variable coefficients and the
associated commutator terms, nonperiodic boundary conditions, and quantitative
local error estimates for nonsmooth solutions in multidimensional and
nonlinear systems. Computational priorities include robust multistep start-up
procedures and efficient solvers and preconditioners for both the symmetric
normal-equation systems and their generally nonsymmetric SUPG counterparts.

\clearpage

\appendix


\section{Proof of the Dual Residual Estimate}
\label{app:dual-residual-proof}
This appendix gives the complete proof of
Proposition~\ref{prop:dual-residual-bound}. The argument is based on the
ordinary \(L^2\)-projection, skew-symmetry, and discrete summation by parts.

Fix a terminal index \(m\), with \(n_0\le m\le N\), and extend the test sequence
by zero outside \(0\le n\le m\). Set
\begin{equation}
\mathcal D_m(V)
=\bigl(\mathcal I_{\tau}(V)+\|V\|_{\mathcal E,m}^2\bigr)^{1/2} \label{eq:dual-proof-denominator}
\end{equation}
The coercivity estimates \eqref{eq:energy-coercivity} and
\eqref{eq:dissipation-coercivity}, together with
\eqref{eq:test-operator-norm}, imply
\begin{align}
\max_{0\le j\le m}
\bigl(\|V^j\|_{\Omega}+\delta\|GV^j\|_{\Omega}\bigr)
&\lesssim\mathcal D_m(V) \label{eq:dual-proof-graph-control}\\
\|\delta^{1/2}A_{\tau}V\|_{\ell_{\tau}^2(n_0,m;\mathcal H)}
&\lesssim\mathcal D_m(V) \label{eq:dual-proof-residual-control}
\end{align}
For \(0\le j<n_0\), the first bound follows from the start-up energy
\(\mathcal I_{\tau}(V)\). Since the time stencil is fixed, we also have
\begin{equation}
\max_{n_0\le n\le m}\|\mathcal M_{\tau}V^n\|_{\Omega}
\lesssim\mathcal D_m(V) \label{eq:dual-proof-average-control}
\end{equation}

The spatial approximation estimate, combined with the finite-stencil trace
argument leading to \eqref{eq:method-average-discrete-time-trace}, gives
\begin{align}
\|\mathcal M_{\tau}\eta\|_{\ell_{\tau}^2(n_0,m;\mathcal H)}
&\lesssim h^r\mathcal N_{r,2}(U) \label{eq:dual-proof-projection-average}\\
\|A_{\tau}\eta\|_{\ell_{\tau}^2(n_0,m;\mathcal H)}
&\lesssim h^{r-1}\mathcal N_{r,2}(U) \label{eq:dual-proof-projection-residual}\\
\max_{n_0\le n\le m}\|A_{\tau}\eta^n\|_{\Omega}
+\|D_{\tau}^{+}A_{\tau}\eta\|_{
\ell_{\tau}^2(n_0,m-1;\mathcal H)}
&\lesssim h^{r-1}\mathcal N_{r,2}(U) \label{eq:dual-proof-differentiated-projection-residual}
\end{align}
Indeed, \(D_{\tau}^{-}\eta\) is an interval average of
\((\mathrm{Id}-\Pi_h)\partial_tU\), whereas
\(D_{\tau}^{+}D_{\tau}^{-}\eta\) is a difference of two such averages. The
remaining contributions are fixed-stencil averages of \(G\eta\) and
\(G D_{\tau}^{+}\eta\). Hence
\eqref{eq:dual-proof-projection-average}--
\eqref{eq:dual-proof-differentiated-projection-residual} follow from
\eqref{eq:spatial-approximation}, applied to \(U\) and its first two time
derivatives.


\subsection{Projection Contribution}
\label{subsec:dual-proof-projection}
Since \(\Pi_h\) is the spatial \(L^2\)-projection, it commutes with the time
operators and
\begin{align}
(D_{\tau}^{-}\eta^n,\mathcal M_{\tau}V^n)_{\Omega}
&=0 \label{eq:dual-proof-first-orthogonality}\\
(\mathcal M_{\tau}\eta^n,D_{\tau}^{-}V^n)_{\Omega}
&=0 \label{eq:dual-proof-second-orthogonality}
\end{align}
Skew-symmetry of \(G\) therefore gives
\begin{equation}
(A_{\tau}\eta^n,\mathcal M_{\tau}V^n)_{\Omega}
=-(\mathcal M_{\tau}\eta^n,A_{\tau}V^n)_{\Omega} \label{eq:dual-proof-projection-cancellation}
\end{equation}
Combining \eqref{eq:dual-proof-projection-cancellation} with
\(P_{\delta}=\mathcal M_{\tau}+\delta A_{\tau}
+\delta^2GD_{\tau}^{-}\) yields
\eqref{eq:dual-projection-functional-identity}. Notice that this step does not
move \(\mathcal M_{\tau}\) between the two arguments.

For the projection contribution, define
\begin{equation}
\mathcal F_{\eta,m}(V)
=\tau\sum_{n=n_0}^{m}
(A_{\tau}\eta^n,P_{\delta}V^n)_{\Omega} \label{eq:dual-proof-projection-functional}
\end{equation}
Since \(GD_{\tau}^{-}V=D_{\tau}^{-}GV\), the exact summation-by-parts
formula yields
\begin{align}
\mathcal F_{\eta,m}(V)
&=\tau\sum_{n=n_0}^{m}
(\delta A_{\tau}\eta^n-\mathcal M_{\tau}\eta^n,
A_{\tau}V^n)_{\Omega} \label{eq:dual-proof-projection-functional-main}\\
&\quad+\delta^2(A_{\tau}\eta^m,GV^m)_{\Omega}
-\delta^2(A_{\tau}\eta^{n_0},GV^{n_0-1})_{\Omega} \notag\\
&\quad-\tau\delta^2\sum_{n=n_0}^{m-1}
(D_{\tau}^{+}A_{\tau}\eta^n,GV^n)_{\Omega} \label{eq:dual-proof-projection-functional-sbp}
\end{align}
Both endpoint terms are retained. The Cauchy--Schwarz inequality and
\eqref{eq:dual-proof-graph-control}--\eqref{eq:dual-proof-residual-control}
then yield, with \(T_m=m\tau\le T\),
\begin{align}
|\mathcal F_{\eta,m}(V)|
&\lesssim\Bigl(
\delta^{1/2}\|A_{\tau}\eta\|_{\ell_{\tau}^2(n_0,m;\mathcal H)}
+\delta^{-1/2}\|\mathcal M_{\tau}\eta\|_{
\ell_{\tau}^2(n_0,m;\mathcal H)}\Bigr)\mathcal D_m(V) \label{eq:dual-proof-projection-functional-bound-main}\\
&\quad+\delta\Bigl(
2\max_{n_0\le n\le m}\|A_{\tau}\eta^n\|_{\Omega}
+T_m^{1/2}\|D_{\tau}^{+}A_{\tau}\eta\|_{
\ell_{\tau}^2(n_0,m-1;\mathcal H)}\Bigr)\mathcal D_m(V) \label{eq:dual-proof-projection-functional-bound-sbp}
\end{align}
Substituting \eqref{eq:dual-proof-projection-average}--
\eqref{eq:dual-proof-differentiated-projection-residual}, together with
\(0<\delta\le1\), gives
\begin{equation}
|\mathcal F_{\eta,m}(V)|
\lesssim\bigl((1+T^{1/2})\delta^{1/2}h^{r-1}
+\delta^{-1/2}h^r\bigr)\mathcal N_{r,2}(U)\mathcal D_m(V) \label{eq:dual-proof-projection-final}
\end{equation}


\subsection{Truncation Contribution}
\label{subsec:dual-proof-truncation}
For the truncation contribution, set
\begin{equation}
\mathcal F_{\rho,m}(V)
=\tau\sum_{n=n_0}^{m}(\rho^n,P_{\delta}V^n)_{\Omega} \label{eq:dual-proof-truncation-functional}
\end{equation}
Expanding \(P_{\delta}\) and applying the same summation-by-parts formula give
\begin{align}
\mathcal F_{\rho,m}(V)
&=\tau\sum_{n=n_0}^{m}(\rho^n,\mathcal M_{\tau}V^n)_{\Omega} \label{eq:dual-proof-truncation-functional-main}\\
&\quad+\tau\delta\sum_{n=n_0}^{m}(\rho^n,A_{\tau}V^n)_{\Omega} \notag\\
&\quad+\delta^2(\rho^m,GV^m)_{\Omega}
-\delta^2(\rho^{n_0},GV^{n_0-1})_{\Omega} \notag\\
&\quad-\tau\delta^2\sum_{n=n_0}^{m-1}
(D_{\tau}^{+}\rho^n,GV^n)_{\Omega} \label{eq:dual-proof-truncation-functional-sbp}
\end{align}
Consequently, the bounds
\eqref{eq:dual-proof-graph-control}--\eqref{eq:dual-proof-average-control} and
the Cauchy--Schwarz inequality give
\begin{align}
|\mathcal F_{\rho,m}(V)|
&\lesssim\Bigl(
T_m^{1/2}\|\rho\|_{\ell_{\tau}^2(n_0,m;\mathcal H)}
+\delta^{1/2}\|\rho\|_{\ell_{\tau}^2(n_0,m;\mathcal H)}\Bigr)
\mathcal D_m(V) \label{eq:dual-proof-truncation-functional-bound-main}\\
&\quad+\delta\Bigl(
2\max_{n_0\le n\le m}\|\rho^n\|_{\Omega}
+T_m^{1/2}\|D_{\tau}^{+}\rho\|_{
\ell_{\tau}^2(n_0,m-1;\mathcal H)}\Bigr)\mathcal D_m(V) \label{eq:dual-proof-truncation-functional-bound-sbp}
\end{align}
The consistency estimates \eqref{eq:global-truncation-bound}--
\eqref{eq:differentiated-truncation-bound}, together with
\(0<\delta\le1\), therefore imply
\begin{equation}
|\mathcal F_{\rho,m}(V)|
\lesssim(1+T^{1/2})\tau^\nu
\mathcal C^{\sharp}_{T,\nu+2}(U,F)\mathcal D_m(V) \label{eq:dual-proof-truncation-final}
\end{equation}

Finally, since \(R=A_{\tau}\eta-\rho\), its residual functional is
\(\mathcal F_{\eta,m}-\mathcal F_{\rho,m}\). Combining
\eqref{eq:dual-proof-projection-final} and
\eqref{eq:dual-proof-truncation-final}, dividing by
\(\mathcal D_m(V)\), taking the supremum over all admissible test sequences,
and then maximising over \(n_0\le m\le N\) proves
\eqref{eq:dual-residual-bound-space}--\eqref{eq:dual-residual-bound-time}.

\bigskip

\paragraph{Acknowledgements.}
The authors worked on this project in autumn 2025 while visiting Institut
Mittag-Leffler for the semester programme ``Interfaces and Unfitted
Discretization Methods''. We gratefully acknowledge the support and hospitality
of Institut Mittag-Leffler.

EB was supported in part by EPSRC grants EP/X042650/1 and EP/V050400/1. PH was
supported by the Swedish Research Council under grant 2022-03908. MGL was
supported in part by the Swedish Research Council under grants 2021-04925 and
2025-05562, the Knut and Alice Wallenberg Foundation under grant KAW
2025.0277, and the Swedish Research Programme Essence.

\paragraph{Use of Artificial Intelligence and Computational Tools.}
During the preparation and revision of this manuscript, the authors used
OpenAI's ChatGPT and Codex for language editing, organisation of the
presentation, LaTeX drafting, and algebraic checks of the Adams stability
identities. MATLAB and Python, using NumPy, SciPy, and Matplotlib, were used for
numerical computations, independent verification, and figure preparation. All
AI-assisted material was reviewed and verified by the authors, who take full
responsibility for the content of the manuscript.

\paragraph{Code Availability.}
The numerical code and reference data used for the experiments in this paper
are publicly available at
\url{github.com/mglarson1/symtransport-numerics}. The repository
contains MATLAB and Python implementations, fixed experiment configurations,
scripts for reproducing the reported tables and figures, reference results,
and automated verification tests. The software is distributed under the BSD
3-Clause License.

\bigskip
\bigskip
\noindent
\footnotesize {\bf Authors' addresses:}

\smallskip
\noindent
Erik Burman, \quad \hfill \addressuclshort\\
{\tt e.burman@ucl.ac.uk}

\smallskip
\noindent
Peter Hansbo, \quad \hfill \addressjushort\\
{\tt peter.hansbo@ju.se}

\smallskip
\noindent
Mats G. Larson, \quad \hfill \addressumushort\\
{\tt mats.larson@umu.se}

\bibliographystyle{abbrv}
\bibliography{references}

\begin{thebibliography}{10}

\bibitem{AKS19}
M.~Asadzadeh, P.~Kowalczyk, and C.~Standar.
\newblock On {$hp$}-streamline diffusion and {N}itsche schemes for the
  relativistic {V}lasov--{M}axwell system.
\newblock {\em Kinet. Relat. Models}, 12(1):105--131, 2019.

\bibitem{BB04}
R.~Becker and M.~Braack.
\newblock A two-level stabilization scheme for the {N}avier-{S}tokes equations.
\newblock In {\em Numerical mathematics and advanced applications}, pages
  123--130. Springer, Berlin, 2004.

\bibitem{BDG25}
L.~Beir{\~a}o~da Veiga, F.~Dassi, and S.~G{\'o}mez.
\newblock {SUPG}-stabilized time-{DG} finite and virtual elements for the
  time-dependent advection-diffusion equation.
\newblock {\em Comput. Methods Appl. Mech. Engrg.}, 436:117722, 2025.

\bibitem{BH21}
M.~Berggren and L.~H\"{a}gg.
\newblock Well-posed variational formulations of {F}riedrichs-type systems.
\newblock {\em J. Differential Equations}, 292:90--131, 2021.

\bibitem{BM04}
O.~Besson and G.~de~Montmollin.
\newblock Space-time integrated least squares: a time-marching approach.
\newblock {\em Internat. J. Numer. Methods Fluids}, 44(5):525--543, 2004.

\bibitem{BGS04}
P.~B. Bochev, M.~D. Gunzburger, and J.~N. Shadid.
\newblock Stability of the {SUPG} finite element method for transient
  advection-diffusion problems.
\newblock {\em Comput. Methods Appl. Mech. Engrg.}, 193(23-26):2301--2323,
  2004.

\bibitem{BrooksHughes1982CMAME}
A.~N. Brooks and T.~J.~R. Hughes.
\newblock Streamline upwind/{P}etrov--{G}alerkin formulations for convection
  dominated flows with particular emphasis on the incompressible
  {N}avier--{S}tokes equations.
\newblock {\em Computer Methods in Applied Mechanics and Engineering},
  32(1--3):199--259, 1982.

\bibitem{BE16}
K.~Burazin and M.~Erceg.
\newblock Non-stationary abstract {F}riedrichs systems.
\newblock {\em Mediterr. J. Math.}, 13(6):3777--3796, 2016.

\bibitem{Bu10}
E.~Burman.
\newblock Consistent {SUPG}-method for transient transport problems: stability
  and convergence.
\newblock {\em Comput. Methods Appl. Mech. Engrg.}, 199(17-20):1114--1123,
  2010.

\bibitem{BEF10}
E.~Burman, A.~Ern, and M.~A. Fern\'{a}ndez.
\newblock Explicit {R}unge-{K}utta schemes and finite elements with symmetric
  stabilization for first-order linear {PDE} systems.
\newblock {\em SIAM J. Numer. Anal.}, 48(6):2019--2042, 2010.

\bibitem{BG22}
E.~Burman and J.~Guzm\'an.
\newblock Implicit-explicit multistep formulations for finite element
  discretisations using continuous interior penalty.
\newblock {\em ESAIM Math. Model. Numer. Anal.}, 56(1):349--383, 2022.

\bibitem{BH04}
E.~Burman and P.~Hansbo.
\newblock Edge stabilization for {G}alerkin approximations of
  convection-diffusion-reaction problems.
\newblock {\em Comput. Methods Appl. Mech. Engrg.}, 193(15-16):1437--1453,
  2004.

\bibitem{BS11}
E.~Burman and G.~Smith.
\newblock Analysis of the space semi-discretized {SUPG} method for transient
  convection-diffusion equations.
\newblock {\em Math. Models Methods Appl. Sci.}, 21(10):2049--2068, 2011.

\bibitem{CS89}
B.~Cockburn and C.-W. Shu.
\newblock T{VB} {R}unge-{K}utta local projection discontinuous {G}alerkin
  finite element method for conservation laws. {II}. {G}eneral framework.
\newblock {\em Math. Comp.}, 52(186):411--435, 1989.

\bibitem{Cod00}
R.~Codina.
\newblock Stabilization of incompressibility and convection through orthogonal
  sub-scales in finite element methods.
\newblock {\em Comput. Methods Appl. Mech. Engrg.}, 190(13-14):1579--1599,
  2000.

\bibitem{Dahl63}
G.~G. Dahlquist.
\newblock A special stability problem for linear multistep methods.
\newblock {\em Nordisk Tidskr. Informationsbehandling (BIT)}, 3:27--43, 1963.

\bibitem{Der12}
E.~Deriaz.
\newblock Stability conditions for the numerical solution of
  convection-dominated problems with skew-symmetric discretizations.
\newblock {\em SIAM J. Numer. Anal.}, 50(3):1058--1085, 2012.

\bibitem{EG06}
A.~Ern and J.-L. Guermond.
\newblock Discontinuous {G}alerkin methods for {F}riedrichs' systems. {I}.
  {G}eneral theory.
\newblock {\em SIAM J. Numer. Anal.}, 44(2):753--778, 2006.

\bibitem{EG25}
A.~Ern and J.-L. Guermond.
\newblock {$L^2$}-stability of explicit {Runge--Kutta} methods with {SUPG}
  stabilization for transient transport problems.
\newblock Research report hal-05363804, HAL, 2025.
\newblock Available at \url{https://hal.science/hal-05363804v1}.

\bibitem{EGC07}
A.~Ern, J.-L. Guermond, and G.~Caplain.
\newblock An intrinsic criterion for the bijectivity of {H}ilbert operators
  related to {F}riedrichs' systems.
\newblock {\em Comm. Partial Differential Equations}, 32(1-3):317--341, 2007.

\bibitem{FGK25}
T.~Führer, R.~González, and M.~Karkulik.
\newblock Well-posedness of first-order acoustic wave equations and space-time
  finite element approximation.
\newblock {\em IMA Journal of Numerical Analysis}, page drae104, 03 2025.

\bibitem{GFR15}
M.~L. Ghrist, B.~Fornberg, and J.~A. Reeger.
\newblock Stability ordinates of {A}dams predictor-corrector methods.
\newblock {\em BIT}, 55(3):733--750, 2015.

\bibitem{Guer99}
J.-L. Guermond.
\newblock Stabilization of {G}alerkin approximations of transport equations by
  subgrid modeling.
\newblock {\em M2AN Math. Model. Numer. Anal.}, 33(6):1293--1316, 1999.

\bibitem{HFH89}
T.~J.~R. Hughes, L.~P. Franca, and G.~M. Hulbert.
\newblock A new finite element formulation for computational fluid dynamics.
  {VIII}. {T}he {G}alerkin/least-squares method for advective-diffusive
  equations.
\newblock {\em Comput. Methods Appl. Mech. Engrg.}, 73(2):173--189, 1989.

\bibitem{JNP84}
C.~Johnson, U.~N\"{a}vert, and J.~Pitk\"{a}ranta.
\newblock Finite element methods for linear hyperbolic problems.
\newblock {\em Comput. Methods Appl. Mech. Engrg.}, 45(1-3):285--312, 1984.

\bibitem{LW95}
G.~Lube and D.~Weiss.
\newblock Stabilized finite element methods for singularly perturbed parabolic
  problems.
\newblock {\em Appl. Numer. Math.}, 17(4):431--459, 1995.

\bibitem{MTRA23}
S.~Michel, D.~Torlo, M.~Ricchiuto, and R.~Abgrall.
\newblock Spectral analysis of high order continuous {FEM} for hyperbolic
  {PDE}s on triangular meshes: influence of approximation, stabilization, and
  time-stepping.
\newblock {\em J. Sci. Comput.}, 94:49, 2023.

\bibitem{ZL22}
J.~Zhang and X.~Liu.
\newblock Uniform stability of the {SUPG} method for the evolutionary
  convection-diffusion-reaction equation.
\newblock {\em Comput. Math. Appl.}, 124:1--6, 2022.

\end{thebibliography}

\end{document}